\journal{Journal of Computational Physics}
\DeclarePairedDelimiter\norm{\lVert}{\rVert}%
\newtheorem{theorem}{Theorem}
\newtheorem{proposition}[theorem]{Proposition}
\newtheorem{lemma}[theorem]{Lemma}
\newcommand{\R}{\mathbb{R}}
\newcommand{\lep}{{L_\epsilon}}
\newcommand{\alep}{{L^*_\epsilon}}
\newcommand{\ko}{\mathcal{L}}
\newcommand{\al}{\mathcal{L}^*}
\newcommand{\BEA}{\begin{eqnarray}}
\newcommand{\EEA}{\end{eqnarray}}
\newcommand{\comment}[1]{}
\newcommand{\gq}{G_{q,\epsilon}}
\newcommand{\te}{\tilde{\epsilon}}
\newcommand{\tm}{\tilde{m}}
\newcommand{\tmo}{\tilde{m}_0}
\newcommand{\tmi}{\tilde{m}_1}
\newcommand{\tom}{\tilde{\omega}}
\newcommand{\mt}{\mathcal{O}(\tilde{\epsilon}^2)}
\newcommand{\me}{\mathcal{O}(\epsilon^2)}
\newcommand{\qt}{q_{\tilde{\epsilon}}}
\begin{document}

\title{Approximating solutions of linear elliptic PDE's on a smooth manifold using local kernel}
\author[rvt]{Faheem Gilani\corref{cor}}
\ead{fhg3@psu.edu}
\author[rvt,rvt2,rvt3]{John Harlim}
\ead{jharlim@psu.edu}
\cortext[cor]{Corresponding author}
\address[rvt]{Department of Mathematics, the Pennsylvania State University, 109 McAllister Building, University Park, PA 16802-6400, USA}
\address[rvt2]{Department of Meteorology and Atmospheric Science, the Pennsylvania State University, 503 Walker Building, University Park, PA 16802-5013, USA}
\address[rvt3]{Institute for CyberScience, the Pennsylvania State University, 224B Computer Building, University Park, PA 16802, USA}
\date{\today}

\begin{abstract}
A mesh-free numerical method for solving linear elliptic PDE's using the local kernel theory that was developed for manifold learning is proposed. In particular, this novel approach exploits the local kernel theory which allows one to approximate the Kolmogorov operator associated with It\^o diffusion processes on compact Riemannian manifolds without boundary or with Neumann boundary conditions using an integral operator. Theoretical justification for the convergence of this numerical technique is provided under the standard conditions for the existence of the weak solutions of the PDEs. Numerical results on various instructive examples, ranging from PDE's defined on flat and non-flat manifolds with known and unknown embedding functions show accurate approximation with error on the order of the kernel bandwidth parameter.
\end{abstract}

\begin{keyword}
advection-diffusion equations \sep local kernel theory \sep diffusion maps

\end{keyword}

\maketitle

\begin{section}{Introduction}\label{introduction}

An important classical model in applied mathematics is the second order elliptic linear partial differential equations (see e.g., \cite{evans1998partial}). This well-studied boundary value problem arises in various applications including fluid flow, elasticity, electromagnetism, heat conduction \cite{Feynman}, neutron diffusion \cite{Neutron}, and probability theory \cite{MoPer}.
 For PDE's on manifolds (especially on two-dimensional surfaces), many methods have been developed to numerically approximate a solution. 
While the implementation detail of each of the existing methods is different in its own way, most of them have a unifying theme: they require a representation of the surface to approximate the tangential derivatives along the surface. For example, the finite element method (FEM) uses (triangulated) meshes to approximate the surface \cite{dziuk2013finite,camacho,bonito2016high}. The approach in \cite{bertalmio2001variational,memoli2004implicit} represents the surface using level sets. The closest point method \cite{ruuth2008simple} uses a closest point representation of the surface. The mesh-free radial basis function (RBF) method represents the surface using a distance function such that the surface is a level set of the function \cite{piret2012orthogonal}. See also the references in \cite{piret2012orthogonal,li2016convergent} for a more comprehensive literature review on this topic. For a high-dimensional manifold $\mathcal{M}$ embedded in $\mathbb{R}^n$, obtaining these manifold representations from the ambient data (or the so-called point cloud in this community) can be challenging, as pointed out in \cite{li2016convergent}.

In data science, the problem of characterizing a manifold from the embedded data is known as manifold learning. 
One of the most popular and theoretically rigorous approaches to manifold learning is the diffusion maps algorithm \cite{cl:06,bh:15vb}. In a nutshell, the diffusion maps algorithm characterizes the manifold using the eigenfunctions of the Laplace-Beltrami (or weighted Laplacian) operator estimated from the data that lie on the manifold. Technically, the diffusion maps algorithm approximates the Laplacian operator using a local integral operator with exponentially decaying kernel functions defined on the ambient data (or point cloud). In this paper, our aim is to leverage this operator estimation technique to solve PDE's on smooth manifolds without boundary or with Neumann boundary conditions imposed on the solution when the manifold has a boundary. A closely related method that shares the same idea is the point integral method (PIM) for solving Poisson problems \cite{li2017point} and isotropic elliptic equations \cite{li2016convergent}. The proposed approach in this paper can be interpreted as a generalization of the PIM to non-symmetric diffusion operators of Kolmogorov type which leverages the recently developed local kernel theory \cite{bs:16}. A detailed discussion about the connection of the proposed approach to PIM is presented in Section~2.

In particular, we consider the following boundary value problem:
\BEA
\begin{cases}
(a+\mathcal{L})u(x) &= f(x), \ \ \ \ x  \in \mathcal{M}  \\
\partial_{\nu}u\mid_{\partial \mathcal{M}}&=0,
 \label{PDE}
\end{cases}
\EEA
 where $\mathcal{M}\subseteq\mathbb{R}^n$ is a compact $d$-dimensional smooth manifold, embedded in $\mathbb{R}^n$. We note that the proposed method is also valid for manifolds without boundary. In \eqref{PDE}, the term $a$ (by an abuse of notation) denotes the multiplication operator by a scalar function $a: \mathcal{M}\rightarrow \R$, and the differential operator $\mathcal{L}$ is the backward Kolmogorov operator of It\^o diffusion,  
\BEA
\mathcal{L} =  b \cdot \nabla + \frac{1}{2} c_{ij} \nabla_i\nabla_j,   \label{generalL}
\EEA
where $\nabla$ denotes the gradient, $\nabla_i$ denotes the covariant derivative in the $i$th direction, and $\nabla_i\nabla_j$ denotes the components of the Hessian operator. Here the differential operators and the dot product are defined with respect to the Riemannian metric inherited by $\mathcal{M}$ from $\mathbb{R}^n$. The differential operator $\mathcal{L}$ involves a vector field $b:\mathcal{M} \to \mathbb{R}^d$ and a symmetric positive definite diffusion tensor $c:\mathcal{M} \to \mathbb{R}^d\times \mathbb{R}^d$. The main idea in this paper is to apply the integral operator estimation from the local kernel theory \cite{bs:16} to solve the boundary value problem described in \eqref{PDE}. To formalize this mesh-free scheme, we will discuss the well-posedness of the approximate linear problem and the convergence of the solution operator.

The remainder of this paper is organized as follows: In Section~2, we review the local kernel theory, formulate our approach, and compare it to the point integral method (PIM). In Section~3, we discuss the convergence of the proposed local integral approximation, the numerical discretization and its convergence rate. In Section~4, we provide numerical demonstrations of the method on several examples. Comparisons of the proposed scheme with RBF and FEM will be shown. Finally, we close the paper with a short summary in Section~5.
\end{section}

\section{Approximating the differential operator with a local integral operator}

In this section, we briefly review the relevant results from \cite{bh:15vb} that serve as the foundation for the numerical approach proposed in this paper. In particular, we review an asymptotic expansion that allows one to approximate the differential operator $\mathcal{L}$ (as well as $\mathcal{L}^*$) with a local integral operator. 

Let $\mathcal{M}\subseteq\mathbb{R}^n$ be a $d-$dimensional manifold embedded in $\R^n$ via the embedding function $\iota:\mathcal{M}\to\mathbb{R}^n$. For simplicity we abuse the notation $x\in\mathcal{M}$ (instead of using $\iota^{-1}(x)$) to denote points on the manifold with ambient coordinate representation $x\in\mathbb{R}^n$, but we will clarify which coordinates we are referring to in each of the following definition below. We define the prototypical local kernel, $K:\R^+\times \R^n \times \R^n\rightarrow \R$, as,
\BEA
K(\epsilon,x,y)=\exp\left(-\frac{(x-y+\epsilon B(x))^\top C(x)^{-1}(x-y+\epsilon B(x))}{2\epsilon}\right),\label{prototypicalkernel}
\EEA
with a symmetric positive definite $C$. By local kernel, we mean that there exists constants $\beta, \sigma>0$ and a vector field $B:\R^n\rightarrow \R^n$ independent of $\epsilon$ such that 
\BEA
0 \leq K(\epsilon,x,x+\sqrt{\epsilon}z) \leq \beta e^{-\sigma \norm{z-\sqrt{\epsilon}B(x)}^2}, \label{localkernelbound}
\EEA
for all $x,z\in \R^n$.  The first inequality is clear since the kernel is an exponential function. The second inequality can be deduced as follows: since all eigenvalues of the matrix $C$ are real and strictly positive, we have that $\|C(x)^{-1}\|_2 \geq \lambda_1^{-1}$, where $\lambda_1$ denotes the largest eigenvalue of $C$. Thus the estimate in \eqref{localkernelbound} holds for any $\beta\geq 1$ and $\sigma \leq  (2\lambda_1)^{-1}$. 

It is clear from the above that the prototypical kernel in \eqref{prototypicalkernel} evaluates points on the manifold in the ambient coordinate, $x,y \in\R^n$. Let $B:\R^n\rightarrow \R^n$  and $C:\R^n\rightarrow \R^n\times \R^n $ satisfy
\BEA
B(x) &=& (D\iota(x)^\dagger)^\top b(x), \label{Bb}\\
C(x)^{-1} &=& ( D\iota(x) c(x) D\iota(x)^\top)^\dagger,\label{Cc}
\EEA
where the Jacobian $D\iota(x)$ is a map that takes vectors in $T_x \mathcal{M}\cong\R^d$ to $T\R^n\cong\R^n$ and $\dagger$ denotes the pseudo-inverse. Then the zeroth, first, and second moments of the kernel are given through the following limits:
\begin{align*}
m(x)&=\lim_{\epsilon \rightarrow 0}\int_{T_x \mathcal{M}}K(\epsilon,x,x+\sqrt{\epsilon} \hat{z}) \ dz\\
 b_i(x)m(x)&=\lim_{\epsilon \rightarrow 0}\frac{1}{\sqrt{\epsilon}}\int_{T_x \mathcal{M}}z_iK(\epsilon,x,x+\sqrt{\epsilon} \hat{z} )\ dz, \quad \quad i=1,\ldots, d\\
c_{ij}(x)m(x)&=\lim_{\epsilon \rightarrow 0}\int_{T_x \mathcal{M}}z_iz_jK(\epsilon,x,x+\sqrt{\epsilon} \hat{z}) \ dz, \quad\quad i,j=1,\ldots, d.
\end{align*}
Here, notice the abuses of notation in $m, b,$ and $c$ which are functions of intrinsic coordinates as denoted in Section~\ref{introduction} i.e. $m(x) := m(\iota^{-1}(x))$, $b(x) := b(\iota^{-1}(x))$, and $c(x) := c(\iota^{-1}(x))$. In these limits, vector $\hat{z}=(z,0) \in \mathbb{R}^n$ is equal to $z\in\mathbb{R}^d$ onto $T_x\mathcal{M}$ and $0$ in the orthogonal directions. For the prototypical kernel defined in \eqref{prototypicalkernel}, one can deduce that the normalization constant $m(x) =(2\pi)^{d/2}\mbox{det}(c(x))^{1/2}$.

The main result from \cite{bh:15vb} used in this paper is that for any $u\in C^3(\mathcal{M})$, the asymptotic expansion,
\BEA
G_\epsilon u(x) &:=& \epsilon^{-d/2}\int_{\mathcal{M}}K(\epsilon,x,y) u(y) \ dy \nonumber\\ &=& m(x)u(x)+\epsilon(\omega(x)u(x)+m(x)\mathcal{L}u(x))+\mathcal{O}(\epsilon^2), 
\label{gef}\EEA
holds when $\mathcal{M}$ has no boundaries or when $\mathcal{M}$ has boundary and $f$ satisfies the Neumann boundary conditions  $\partial_\nu f |_{\partial \mathcal{M}} = 0$, where $\partial_{\nu}$ is the normal derivative.

From \eqref{gef}, we can approximate the normalization constant $m(x)$ using
\BEA
G_\epsilon 1(x) = \epsilon^{-d/2}\int_{\mathcal{M}}K(\epsilon,x,y) \ dy = m(x) + \epsilon \omega(x)+\mathcal{O}(\epsilon^2).\label{ge1}
\EEA
For notational convenience, we define a normalized kernel:
\BEA
S(\epsilon,x,y):=\frac{K(\epsilon,x,y)}{\int_{\mathcal{M}}K(\epsilon,x,y) \ dy} = \epsilon^{-d/2} \frac{K(\epsilon,x,y)} {G_\epsilon 1(x) }.\label{khat}
\EEA 
Using \eqref{gef} and \eqref{ge1}, we can show that
\BEA
J_\epsilon u(x) &:=& \int_{\mathcal{M}} S(\epsilon,x,y) u(y)\ dy = \frac{G_\epsilon u(x)}{G_\epsilon 1(x)} \nonumber \\
&=& u(x)  + \epsilon \mathcal{L}u(x) + \mathcal{O}(\epsilon^2),\label{ghat}
\EEA
which suggests that the differential operator $\mathcal{L}$ can be pointwise approximated by the following Fredholm integral operator of the second kind:
\BEA
L_\epsilon u(x) &:=& \frac{1}{\epsilon}(J_\epsilon - \mathcal{I}) u(x) = \mathcal{L}u(x) + \mathcal{O}(\epsilon).\label{Le}
\EEA
Here (and in the remainder of this paper), the notation $\mathcal{I}$ denotes an identity operator.
The adjoint operator

\BEA
\mathcal{L}^* =  -\mbox{div} (b \,\cdot)  + \frac{1}{2} \nabla_i\nabla_j (c_{ij} \,\cdot), \label{generalLadj}
\EEA
can also be estimated using almost the same procedure. In particular, one can show that the adjoint of \eqref{gef} yields

\BEA
G^*_\epsilon u(x) &:=&\epsilon^{-d/2}\int_{\mathcal{M}}K(\epsilon,y,x) u(y) \ dy \nonumber\\ &=& m(x)u(x)+\epsilon(\omega(x)u(x)+\mathcal{L}^*(m(x)u(x)))+\mathcal{O}(\epsilon^2), \label{aex}
\EEA

and  \eqref{khat}, \eqref{ghat}, and \eqref{Le} are replaced subsequently by
\BEA
S^*(\epsilon,x,y)&:=&\frac{K(\epsilon,y,x)}{\int_{\mathcal{M}}K(\epsilon,x,y) \ dy} = \epsilon^{-d/2} \frac{K(\epsilon,y,x)} {G_\epsilon 1(x) },\nonumber \\
J^*_\epsilon u(x) &:=& \int_{\mathcal{M}}S^*(\epsilon,y,x) u(y) \ dy, \nonumber\\
L^*_\epsilon u(x) &:=& \frac{1}{\epsilon}(J_{\epsilon}^*- \mathcal{I}) u(x) = \mathcal{L}^*u(x) + \mathcal{O}(\epsilon).\label{Leadj}
\EEA

Note that the asterisk in the above are choices in notation and, while $G^*_{\epsilon}$ is the adjoint of $G_{\epsilon}$ with respect to $L^2(\mathcal{M})$, in general $J^*_\epsilon, L^*_\epsilon$ are not the adjoint of $J_{\epsilon}, \lep$, respectively.

The goal of manifold learning is to find a set of (basis) functions to describe the manifold from available data points $x_i$ that lie on the manifold $\mathcal{M}$. The local kernel theory introduced in \cite{bh:15vb} is a generalization of diffusion maps, a nonlinear manifold learning technique \cite{cl:06}. For the diffusion maps algorithm, the procedure above is carried out with $B(x)=0$ and $C(x) = \mathcal{I}(x)$. The result is a self-adjoint negative-definite Laplace-Beltrami operator $\mathcal{L}$ in an appropriate Hilbert space. In this case, the eigenfunctions of $\mathcal{L}$ form an orthonormal basis and the first few leading eigenfunctions, appropriately scaled to preserved the diffusion distance, are used as an isometric embedding to represent the manifold. In the case of a nonzero vector field $b$ and anisotropic diffusion tensor $c$, the resulting operator $\mathcal{L}$ is not self-adjoint. In \cite{bh:15vb},  eigenfunctions of the self-adjoint operator $\mathcal{L}+\mathcal{L}^*$ are considered for manifold learning. It should be noted that the evaluation of the prototypical kernel in \eqref{prototypicalkernel} requires the knowledge of either the intrinsic representation $b$ and $c$ together with the embedding function $\iota$ or the ambient representation $B$ and $C$ as shown in \eqref{Bb}-\eqref{Cc}.

In this paper, we approximate the solution of the boundary value problem in \eqref{PDE} using the integral operator in \eqref{Le}. That is, we approximate the boundary value problem in \eqref{PDE} as
\BEA
(a+\mathcal{L}) u \approx (a+L_\epsilon) u_\epsilon = f, \quad x\in \mathcal{M}. \label{approxlinearproblem}
\EEA
This approach is closely related to the point integral method (PIM) proposed in \cite{li2017point}. The relationship between the point integral method (PIM) and the local kernel method can be clarified through the following special example. 

Consider solving the Poisson problem with Neumann boundary conditions:
\BEA
\begin{cases} \label{poisson}
-\Delta u(x) &= f(x), \quad x\in \mathcal{M}\\
\frac{\partial u}{\partial n}(x) &= 0, \quad\quad x\in \partial\mathcal{M}.
\end{cases}
\EEA 
The point integral method uses a kernel of the form $$S(\epsilon,x,y) = C_\epsilon h\left(\frac{\|x-y\|^2}{4\epsilon}\right),$$ where $h:\mathbb{R}^+\to\mathbb{R}^+$ is either compactly supported or decaying exponentially (a local kernel) and $C_\epsilon$ denotes the normalization constant. Following the examples in \cite{li2017point}, we set $h(r) = e^{-r}$, so that $S$ is nothing but a Gaussian kernel. Define also $\tilde{S}(\epsilon,x,y) = C_\epsilon \tilde{h}(\frac{\|x-y\|^2}{4\epsilon})$ with $\tilde{h}(r) = \int_r^\infty h(s)ds$. For this example, it is clear that $\tilde{h}(r) =h(r)$ so $\tilde{S}(\epsilon,x,y)=S(\epsilon,x,y)$. For this special setup, the PIM approximates the solution of the Poisson problem in \eqref{poisson} with the solution of the following integral equation (see Eqn.~(1.2) in \cite{li2017point}):
\BEA
-\frac{1}{\epsilon} \int_\mathcal{M} S(\epsilon,x,y) (u(y) - u(x)) dy = \int_\mathcal{M}  \tilde{S}(\epsilon,x,y) f(y) dy   \nonumber
\EEA
which is nothing but:
\BEA
-L_\epsilon u(x) = \int_\mathcal{M}  S(\epsilon,x,y) f(y) dy = J_\epsilon f(x),\label{PIM}
\EEA
using the notations in \eqref{ghat} and \eqref{Le}. Since $\lim_{\epsilon\to 0}S(\epsilon,x,y) = \delta(\|x-y\|)$, in weak sense, it is clear that as $\epsilon \to 0$, the PIM described in \eqref{PIM} is equivalent to the local kernel approach, which approximates the Poisson problem in \eqref{poisson} with  $-L_\epsilon u(x) =f(x)$. 

While PIM can handle non-Neumann boundary conditions \cite{li2017point}, it is restricted to isotropic elliptic equations as noted in \cite{li2016convergent}. On the other hand, the local kernel theory is restricted to Neumann boundary conditions (if the manifold has boundary) but it can approximate general non-symmetric second-order linear elliptic differential operator since it uses the prototypical kernel in \eqref{prototypicalkernel}, which is a generalization of the PIM Gaussian kernel. Based on this observation, the local kernel approach can be interpreted as a generalization of PIM to non-symmetric second-order linear elliptic differential operators. This connection opens the door for possible generalization of the local kernel approach to other types of boundary conditions and/or an extension of PIM with prototypical kernel -- all of which are interesting future research directions.

Back to the local kernel approach, we now discuss the convergence of the approximation in \eqref{approxlinearproblem} under standard conditions for the existence of the weak solutions of \eqref{PDE}.

\comment{
We approximate the solution of $$\mathcal{L}u=f$$ with the solution of $\lep\hat{u}=f$, where $\lep$ is constructed via the prototypical kernel. If a unique solution to the latter problem exists, we can denote the solution operator as $\lep^{-1}$ and note that $\lep (u-\hat{u})=\lep u- f=\lep u-\ko u$ so 
\begin{align*}
\norm{u-\hat{u}}&=\norm{\lep^{-1}}\norm{\lep u-\ko u}.
\end{align*}

The preceding  gives us the consistency $\norm{\lep u-\ko u} \rightarrow 0$ and in this write up we show that $\norm{\lep^{-1}}$ is bounded and also that the discrete approximation of $\lep$ via Monte Carlo averages converges.}

\section{Approximate Linear Problem}

In this section we discuss the properties of the approximate linear problem in \eqref{approxlinearproblem}
and show how they relate to the solution of the linear problem in \eqref{PDE}. Specifically, we discuss the convergence of the approximate solution of \eqref{approxlinearproblem} to the exact solution \eqref{PDE} under the assumption that the latter exists and unique in Section 3.1. Subsequently, we discuss the minimum norm solution when the linear problem has non-unique solutions in Section~3.2. We close this section by discussing the discrete approximation of \eqref{approxlinearproblem}, describing the detailed of the algorithm for implementation, and the convergence rate.

\subsection{Convergence of the Approximate Solution}\label{section31}
As noted in the introduction, we consider the boundary value problem in \eqref{PDE} under the standard assumption that $\ko$ is uniformly elliptic with uniformly bounded coefficients. By the Fredholm alternative, a weak solution to \eqref{PDE} exists if and only if $f \in \mathcal{N}(\al)^{\perp}$. Note that here and in the remainder of the paper $\mathcal{N}(L), \mathcal{R}(L)$ denote the kernel and range of an operator $L$, respectively, and $A^{\perp}, \overline{A}$ the orthogonal complement and closure of $A$, respectively. Furthermore, this solution is unique if the homogenous problem corresponding to \eqref{PDE} has only the trivial solution $u \equiv 0$ (see \cite{bressan2013lecture,evans1998partial}). First, we show that a weak solution of the approximate problem in \eqref{approxlinearproblem} is also a weak solution of \eqref{PDE} up to order $\epsilon$. 
\begin{proposition}\label{prop1}
For every $\epsilon>0$, let $u_{\epsilon}$ be a weak solution of \eqref{approxlinearproblem}
with Neumann boundary conditions imposed in the case that $\mathcal{M}$ has boundary. Then $u_\epsilon$ is a weak solution of \eqref{PDE} up to order $\epsilon$.
\end{proposition}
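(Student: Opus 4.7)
The plan is to invoke the pointwise consistency \eqref{Le} directly: for any $u\in C^3(\mathcal{M})$ (satisfying Neumann boundary conditions when $\partial\mathcal{M}\neq \emptyset$), $L_\epsilon u = \mathcal{L}u + \mathcal{O}(\epsilon)$. Starting from $(a + L_\epsilon) u_\epsilon = f$, adding and subtracting $\mathcal{L} u_\epsilon$ yields the identity
\[
(a + \mathcal{L}) u_\epsilon = (a+L_\epsilon) u_\epsilon + (\mathcal{L} - L_\epsilon) u_\epsilon = f + r_\epsilon, \qquad r_\epsilon := (\mathcal{L} - L_\epsilon) u_\epsilon.
\]
Thus $u_\epsilon$ solves the original boundary value problem \eqref{PDE} with source replaced by $f + r_\epsilon$, and it remains to show $r_\epsilon = \mathcal{O}(\epsilon)$ in a norm strong enough to interpret \eqref{PDE} weakly.

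To apply \eqref{Le}, I first establish regularity of $u_\epsilon$. Using $L_\epsilon = \epsilon^{-1}(J_\epsilon - \mathcal{I})$, the approximate equation is equivalent to $(1 - \epsilon a) u_\epsilon = J_\epsilon u_\epsilon - \epsilon f$, so for $\epsilon$ small enough that $1 - \epsilon a$ is invertible,
\[
u_\epsilon = (1 - \epsilon a)^{-1}\bigl(J_\epsilon u_\epsilon - \epsilon f\bigr).
\]
Because the kernel $S(\epsilon,x,y)$ of $J_\epsilon$ is $C^\infty$ in its first argument for each fixed $\epsilon > 0$, $J_\epsilon u_\epsilon$ is smooth even when $u_\epsilon$ is merely $L^2$; bootstrapping against smooth $a$ and $f$ then yields $u_\epsilon \in C^\infty(\mathcal{M})$, and the hypothesized Neumann condition on $u_\epsilon$ puts us precisely in the setting in which \eqref{Le} applies. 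This gives the pointwise bound $\|r_\epsilon\|_{L^\infty(\mathcal{M})} = \mathcal{O}(\epsilon)$.

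To translate this into the weak formulation, multiply $(a+\mathcal{L})u_\epsilon = f + r_\epsilon$ by any test function $v \in H^1(\mathcal{M})$ and integrate over $\mathcal{M}$; since $u_\epsilon$ is smooth and satisfies the Neumann condition, the standard integration by parts produces the bilinear form $B(u_\epsilon, v)$ associated with \eqref{PDE}, equal to $\int_\mathcal{M} f v \, dx + \int_\mathcal{M} r_\epsilon v \, dx$, with the remainder bounded by $\|r_\epsilon\|_{L^2}\|v\|_{L^2} = \mathcal{O}(\epsilon)$. This is precisely the weak form of \eqref{PDE} satisfied up to order $\epsilon$. The main subtlety I anticipate is uniformity of the $\mathcal{O}(\epsilon)$ constant: the hidden constant in \eqref{Le} involves $\|u_\epsilon\|_{C^3}$, and since $u_\epsilon$ itself depends on $\epsilon$, one must eventually verify that this norm does not blow up as $\epsilon \to 0$. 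Proposition~\ref{prop1} asserts only the ``up to order $\epsilon$'' property for each fixed $\epsilon$, so the uniform control needed for genuine convergence can be deferred to the stability and convergence analysis that follows.
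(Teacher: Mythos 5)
Your decomposition $(a+\mathcal{L})u_\epsilon = f + r_\epsilon$ with $r_\epsilon := (\mathcal{L}-L_\epsilon)u_\epsilon$ is a valid identity, but the step $r_\epsilon = \mathcal{O}(\epsilon)$ is the entire content of the proposition and your argument for it has a genuine gap. The pointwise consistency \eqref{Le} is not an operator-norm statement: the hidden constant in the expansion \eqref{gef} depends on the $C^3$ norm (and higher-order remainder) of the function it is applied to. Applying it to $u_\epsilon$ therefore requires a bound on $\|u_\epsilon\|_{C^3}$ that is \emph{uniform in} $\epsilon$. Your bootstrap $u_\epsilon = (1-\epsilon a)^{-1}(J_\epsilon u_\epsilon - \epsilon f)$ does give smoothness of $u_\epsilon$ for each fixed $\epsilon>0$, but the derivatives it produces degenerate as $\epsilon\to 0$ (each derivative of the kernel $S(\epsilon,x,\cdot)$ costs a factor of order $\epsilon^{-1/2}$), so this route yields at best $\|u_\epsilon\|_{C^3} = \mathcal{O}(\epsilon^{-3/2})$ and the claimed $\mathcal{O}(\epsilon)$ bound on $r_\epsilon$ does not follow. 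Deferring this to the later stability analysis is not benign: the proposition is quoted afterwards precisely to conclude that $L_\epsilon \to \mathcal{L}$ weakly and to drive the convergence argument in Theorem~\ref{thm2}, so the asymptotic statement must carry a constant independent of $\epsilon$.

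The paper's proof avoids the issue entirely by never expanding $L_\epsilon$ on $u_\epsilon$. It tests the equation against a fixed $\phi\in C^\infty(\mathcal{M})$, writes $\langle J_\epsilon u_\epsilon,\phi\rangle = \langle u_\epsilon, G^*_\epsilon((G_\epsilon 1)^{-1}\phi)\rangle$, and applies the adjoint expansion \eqref{aex} together with $(G_\epsilon 1)^{-1} = m^{-1} + \mathcal{O}(\epsilon)$ from \eqref{ge1} to the function $(G_\epsilon 1)^{-1}\phi$, whose $C^3$ norm is uniformly controlled because $\phi$ is fixed. After cancellation of the $\epsilon^{-1}$ terms this gives $\langle f,\phi\rangle = \langle u_\epsilon, \mathcal{L}^*\phi\rangle + \mathcal{O}(\epsilon) = \langle \mathcal{L}u_\epsilon,\phi\rangle + \mathcal{O}(\epsilon)$, with the only dependence on $u_\epsilon$ entering through an $L^2$ pairing. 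If you want to salvage your approach, you would need to either supply a uniform $C^3$ a priori estimate for $u_\epsilon$ (which is not available at this stage) or, more simply, dualize as the paper does.
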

\begin{proof}
We consider $a \equiv 0$. The proof for $a \not\equiv 0$ is entirely similar. Let $\phi\in C^{\infty}(\mathcal{M})$ and $u_{\epsilon}$ a weak solution to \eqref{approxlinearproblem}. Then 
\begin{align*}
\langle f,\phi \rangle &= \langle \lep u_{\epsilon},\phi\rangle \\
&=\frac{1}{\epsilon}\langle J_{\epsilon}u_{\epsilon},\phi\rangle-\frac{1}{\epsilon}\langle u_{\epsilon},\phi\rangle\\
&=\frac{1}{\epsilon}\langle G_{\epsilon}u_{\epsilon}, (G_{\epsilon}1)^{-1}\phi \rangle-\frac{1}{\epsilon}\langle u_{\epsilon},\phi\rangle\\
&=\frac{1}{\epsilon}\langle u_{\epsilon},G^*_{\epsilon}((G_{\epsilon}1)^{-1}\phi)\rangle-\frac{1}{\epsilon}\langle u_{\epsilon},\phi\rangle\\
&=\frac{1}{\epsilon}\langle u_{\epsilon},m\phi(G_{\epsilon}1)^{-1}+\epsilon(\omega\phi(G_{\epsilon}1)^{-1}+\al(m\phi 
(G_{\epsilon}1)^{-1}))\rangle
 -\frac{1}{\epsilon}\langle u_{\epsilon},\phi\rangle+\mathcal{O}(\epsilon)\\
&=\frac{1}{\epsilon}\langle u_{\epsilon},\phi (1-\epsilon\omega m^{-1})\rangle+\langle u_{\epsilon}, \omega \phi m^{-1}\rangle
 + \langle u_{\epsilon},\al\phi  \rangle-\frac{1}{\epsilon}\langle u_{\epsilon}, \phi \rangle+\mathcal{O}(\epsilon)\\
&=\langle u_{\epsilon},\al\phi\rangle+\mathcal{O}(\epsilon)\\
&=\langle \ko u_{\epsilon},\phi \rangle+\mathcal{O}(\epsilon).
\end{align*}
Here, we  used the expansions \eqref{aex} and $(G_{\epsilon}1)^{-1}=m^{-1}(1-\epsilon\omega m^{-1})+\mathcal{O}(\epsilon^2)=m^{-1}+\mathcal{O}(\epsilon)$. Thus, up to order $\epsilon$, $u_{\epsilon}$ is a weak solution of \eqref{PDE} and the proof is complete.
\end{proof}

\comment{
We develop a partial converse for the above statement. First, by the definition of the gradient and covarient derivative, 

\begin{align*}
\ko u&= b \cdot \nabla u+\frac{1}{2}\sum_{i,j} c_{ij} \nabla_i\nabla_j u\\
&=\sum_{i,j}g^{ij} \frac{\partial u}{\partial x^j}b_i +\frac{1}{2}\sum_{i,j}c_{ij}\nabla_i(\nabla u)_j\\
&=\sum_{i,j}g^{ij} \frac{\partial u}{\partial x^j}b_i +\frac{1}{2}\sum_{i,j}c_{ij}\left(\frac{\partial^2 u}{\partial x^i\partial x^j}-\sum_k \frac{\partial u}{\partial x^k}\Gamma^k_{j,i}\right)\\
&=\frac{1}{2}\sum_{i,j} c_{ij}\frac{\partial^2 u}{\partial x^i\partial x^j}+\sum_{i,j}g^{ij} \frac{\partial u}{\partial x^j}b_i-\frac{1}{2}\sum_k \frac{\partial u}{\partial x^k}\Gamma^k_{j,i}.
\end{align*}

Here $\Gamma_{j,i}^k$ is a Christoffel symbol of the second kind. We can rewrite the above in the following standard form:

\begin{align*}
\ko u&= \frac{1}{2}\sum_{i,j} c_{ij}\frac{\partial^2 u}{\partial x^i\partial x^j}+\sum_{i,j}g^{ij} \frac{\partial u}{\partial x^j}b_i-\frac{1}{2}\sum_k \frac{\partial u}{\partial x^k}\Gamma^k_{j,i}+a u\\
&=-\sum_{i,j} (-c_{ij}u_{x_i})_{x_j}+\left(\sum_i k_i+\sum_j(-c_{ij})_{x_j}\right)u_{x_i}+a u,
\end{align*}
where $k_i=b_i\sum_n g^{in}-\frac{1}{2}\sum_{l,k}\Gamma^i_{k,l}$. }

This proposition also shows that $L_\epsilon \to \mathcal{L}$ weakly as $\epsilon\to 0$. We can now prove the convergence of the approximate solution for well-posed linear problems:

\begin{theorem}\label{thm2}
\comment{Let $f \in \mathcal{N}(\al)^{\perp}$ } Assume $\ko$ is uniformly elliptic with uniformly bounded coefficients and that $a$ is defined such that $\mathcal{L}+a\mathcal{I}$ is strictly negative definite operator. Then the approximate problem \eqref{approxlinearproblem} also has a unique solution. For any $\epsilon>0$, if $u_\epsilon$ is the solution of \eqref{approxlinearproblem} and $u$ is the solution of \eqref{PDE}, then $u_\epsilon$ converges weakly to $u$.
\end{theorem}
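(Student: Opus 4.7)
The plan is to recast the approximate equation as a Fredholm integral equation of the second kind, establish its unique solvability by combining the Fredholm alternative with the strict negative definiteness hypothesis, and then pass to the weak limit using the duality identity already derived in the proof of Proposition~\ref{prop1}.

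Because the kernel $S(\epsilon,\cdot,\cdot)$ is continuous on the compact product $\mathcal{M}\times\mathcal{M}$, the operator $J_\epsilon:L^2(\mathcal{M})\to L^2(\mathcal{M})$ is Hilbert--Schmidt and hence compact. Multiplying $(a+\lep)u_\epsilon=f$ by $\epsilon$ and isolating $u_\epsilon$ rewrites the equation as
\[(\mathcal{I}-\mathcal{K}_\epsilon)u_\epsilon=g_\epsilon,\qquad \mathcal{K}_\epsilon:=(\mathcal{I}-\epsilon a)^{-1}J_\epsilon,\]
where $\mathcal{K}_\epsilon$ is compact as soon as $\epsilon\|a\|_\infty<1$ so that $(\mathcal{I}-\epsilon a)^{-1}$ is bounded. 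The Fredholm alternative then reduces unique solvability to checking that the homogeneous equation $(a+\lep)v=0$ admits only $v\equiv 0$. Any such $v$ satisfies $v=(1-\epsilon a)^{-1}J_\epsilon v$, so it inherits the smoothness of the kernel $S(\epsilon,\cdot,\cdot)$ and can be treated as a legitimate test function.

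For the uniqueness step, I would repeat the adjoint manipulation from Proposition~\ref{prop1}: pairing $(a+\lep)v=0$ against any $\phi\in C^\infty(\mathcal{M})$ (with $\partial_\nu\phi|_{\partial\mathcal{M}}=0$ when applicable) yields
\[\langle v,(a+\al)\phi\rangle=-\epsilon\,R_\epsilon(v,\phi),\qquad |R_\epsilon(v,\phi)|\le C\|v\|_{L^2}\|\phi\|_{C^3}.\]
Strict negative definiteness of $\ko+a$ is inherited by $\al+a$, so the adjoint boundary value problem $(a+\al)\phi=v$ is uniquely classically solvable with a bound $\|\phi\|_{C^3}\le C'\|v\|_{L^2}$ obtained from elliptic regularity combined with the bootstrap supplied by the integral representation of $v$. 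Substituting then forces $\|v\|_{L^2}^2\le C''\epsilon\,\|v\|_{L^2}^2$, hence $v\equiv 0$. The same duality identity, with $\phi$ now chosen to solve $(a+\al)\phi=u_\epsilon$, produces the uniform a priori bound $\|u_\epsilon\|_{L^2}\le C\|f\|_{L^2}$.

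Given this uniform bound, extract a weakly convergent subsequence $u_{\epsilon_k}\rightharpoonup u^*$ in $L^2(\mathcal{M})$. For each fixed $\phi\in C^\infty(\mathcal{M})$, Proposition~\ref{prop1} gives
\[\langle f,\phi\rangle=\langle u_{\epsilon_k},(a+\al)\phi\rangle+O(\epsilon_k),\]
and letting $k\to\infty$ yields $\langle f,\phi\rangle=\langle u^*,(a+\al)\phi\rangle$, proving that $u^*$ is a weak solution of \eqref{PDE}. Uniqueness of $u$ identifies $u^*=u$, and since every weakly convergent subsequence must share this limit, the whole family $u_\epsilon$ converges weakly to $u$. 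The heart of the argument, and the principal obstacle, is the uniqueness step: the Proposition~\ref{prop1} remainder depends on higher derivatives of the test function, and absorbing it requires the adjoint elliptic regularity supplied precisely by the strict negative definiteness of $\ko+a$; in the final convergence step the $O(\epsilon_k)$ error is harmless because $\phi$ is held fixed.
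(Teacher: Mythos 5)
Your strategy misses the key structural observation that the paper leverages, and as a result the crucial stability estimate is never actually established. The paper's proof is short precisely because it works directly with the bilinear form: since $J_\epsilon$ is the integral operator with the row-normalized kernel $S(\epsilon,x,y)$, one has $\|J_\epsilon\|\le 1$, and combining this with the hypothesis that $a$ is pointwise strictly negative (so that $\langle au,u\rangle\le -\alpha\|u\|^2$) gives, for \emph{every} $\epsilon>0$ and every $u\in L^2(\mathcal{M})$,
\[
\langle (L_\epsilon+a\mathcal{I})u,u\rangle=\frac{1}{\epsilon}\bigl(\langle J_\epsilon u,u\rangle-\|u\|^2\bigr)+\langle au,u\rangle\le -\alpha\|u\|^2 .
\]
This coercivity immediately yields both unique solvability and the uniform bound $\|(L_\epsilon+a\mathcal{I})^{-1}\|\le\alpha^{-1}$, with no reference to $\epsilon$-dependent kernel regularity or to elliptic regularity for the adjoint problem. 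The weak convergence then follows in one line by writing $\langle u-u_\epsilon,w\rangle=\langle (L_\epsilon-\mathcal{L})u,(L_\epsilon+a\mathcal{I})^{-*}w\rangle$ and invoking Proposition~\ref{prop1}.

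The step in your argument that does not close is the uniform a priori bound $\|u_\epsilon\|_{L^2}\le C\|f\|_{L^2}$. Pairing against $\phi$ solving $(a+\al)\phi=u_\epsilon$ gives $\|u_\epsilon\|^2\le \|f\|\,\|\phi\|+C\epsilon\,\|u_\epsilon\|\,\|\phi\|_{C^3}$, so you need $\|\phi\|_{C^3}\le C'\|u_\epsilon\|_{L^2}$ with $C'$ independent of $\epsilon$. Elliptic regularity for $(a+\al)$ only delivers $\|\phi\|_{H^2}\lesssim\|u_\epsilon\|_{L^2}$; to get $C^3$ control you need $u_\epsilon$ in a higher Sobolev space, and the proposed bootstrap from $u_\epsilon=(1-\epsilon a)^{-1}(J_\epsilon u_\epsilon-\epsilon f)$ produces bounds that degenerate as $\epsilon\to 0$ because the kernel $S(\epsilon,\cdot,\cdot)$ concentrates. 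The estimate is therefore circular: $\|u_\epsilon\|_{L^2}$ is controlled in terms of $\epsilon\,\|u_\epsilon\|_{C^k}$ for which you have no $\epsilon$-uniform handle. Without this bound the subsequence extraction has no basis, and the uniqueness argument you sketch is subject to the same objection and in any case only applies for $\epsilon$ small (you also need $\epsilon\|a\|_\infty<1$), whereas the statement and the paper's proof cover every $\epsilon>0$. The fix is precisely the paper's observation that $J_\epsilon$ is a contraction, which short-circuits the whole regularity issue.

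Two smaller points. First, strict negative definiteness of $\ko+a$ does not by itself transfer to $\al+a$ (the operators are not self-adjoint and the Fredholm alternative only tells you about the index, not the sign of the form); this is asserted without justification. Second, the subsequence-extraction route, even if repaired, yields weak convergence without a rate, whereas the adjoint argument in the paper gives $\langle u-u_\epsilon,w\rangle=\mathcal{O}(\epsilon)$ for each test function, which is the quantitative statement the rest of the paper relies upon.
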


\begin{proof}
The assumption guarantees the existence of unique weak solution of \eqref{PDE}. Since $\mathcal{L}$ is the generator of an ergodic It\^o diffusion, then $\mathcal{L}$ is negative definite with zero as its the largest eigenvalue. Since $\mathcal{L}+a\mathcal{I}$ is a strictly negative operator, then there exists $\alpha>0$ such that $\langle au, u\rangle \leq -\alpha \|u\|^2$, for any $u\in L^2(\mathcal{M})$. 

For any $\epsilon>0$, take any $u\in L^2(\mathcal{M})$ and note that
\begin{align*}
\langle (L_\epsilon + a\mathcal{I})u,u \rangle& =\frac{1}{\epsilon}\left(\langle J_\epsilon u, u\rangle - \norm{u}^2 \right) +\langle a u, u \rangle\\
&\leq \frac{1}{\epsilon}\norm{u}^2\left( \norm{J_{\epsilon}}-1 \right)+\langle au,u\rangle\\
&\leq-\alpha\norm{u}^2
\end{align*}
where we have used cauchy-schwarz and the fact that $J_\epsilon$ is a compact operator with $\|J_\epsilon\|\leq 1$. Thus, $L_\epsilon+a\mathcal{I}$ is strictly negative definite and by Theorem 5.12 in \cite{bressan2013lecture} the linear problem in \eqref{approxlinearproblem} has a unique solution and the inverse operator $(L_\epsilon+a\mathcal{I})^{-1}$ is bounded, in fact, $\|(L_\epsilon+a\mathcal{I})^{-1} \| \leq \alpha^{-1}$.

\comment{
Suppose $u_\epsilon$ is the weak solution of the approximate problem in \eqref{approxlinearproblem}, then by Proposition~\ref{prop1} it is also a weak solution of \eqref{PDE} up to order $\epsilon$. 
This means that for any $w\in C^\infty(\mathcal{M})$,
\BEA
\langle (\mathcal{L}+a\mathcal{I}) (u - u_\epsilon), w \rangle = \mathcal{O}(\epsilon),
\EEA 
where $u$ denotes the unique weak solution of \eqref{PDE}. Since smooth function is dense in $L^2(\mathcal{M})$, take $w =  (\mathcal{L}+a\mathcal{I}) (u - u_\epsilon)+\mathcal{O}(\epsilon)$, we have $\|(\mathcal{L}+a\mathcal{I}) (u - u_\epsilon) \| = \mathcal{O}(\epsilon^{1/2})$. This implies that,
\BEA
\|u-u_\epsilon \| \leq \|(\mathcal{L}+a\mathcal{I})^{-1}\| \|(\mathcal{L} +a\mathcal{I} ) (u - u_\epsilon) \| = \mathcal{O}(\epsilon^{1/2}),
\EEA
where we have used the fact that the solution operator, $(\mathcal{L}+a\mathcal{I})^{-1}$, is uniformly bounded, which follows directly by the assumption that $\mathcal{L}+a\mathcal{I}$ is strictly positive definite.
}

Finally, if $u_{\epsilon}$ is the solution of \eqref{approxlinearproblem} and $u$ the solution of \eqref{PDE}, then $$(\lep+a\mathcal{I}) (u-u_{\epsilon})=(\lep+a\mathcal{I}) u- f=(\lep+a\mathcal{I}) u-(\ko+a\mathcal{I}) u.$$ 
For $w\in C^\infty(\mathcal{M})$, we can now deduce that,
\BEA
\langle u-u_{\epsilon}, w \rangle  &=& \langle (\lep+a\mathcal{I})^{-1} (\lep u-\ko u),w\rangle  \nonumber \\ &=&  \langle  (\lep u-\ko u), (\lep+a\mathcal{I})^{-*} w\rangle = \mathcal{O}(\epsilon), \nonumber\EEA
where we have used the fact that adjoint of bounded linear operator exists and is also bounded and applied the Proposition~\ref{prop1} by noting that $(\lep+a\mathcal{I})^{-*} w \in C^\infty(\mathcal{M})$. 

\comment{\color{blue} Thus,
\BEA 
\|u-u_{\epsilon} \|_{L^2} &\leq& \|(L_\epsilon+a\mathcal{I})^{-1} \|_{L^2} \|L_\epsilon u - \mathcal{L} u \|_{L^2} \nonumber \\
&\leq& \alpha^{-1}vol({\mathcal{M}}) \|L_\epsilon u - \mathcal{L} u\|_{L^\infty} \leq  C\epsilon,
\EEA
for some constant $C = M\alpha^{-1}vol({\mathcal{M}})$ with $M$ independent of $\epsilon$. Here, we have used the fact that $L^\infty \subset L^2$ for compact manifold $\mathcal{M}$ and the pointwise convergence of $L_\epsilon$ to $\mathcal{L}$, noted in \eqref{Le}, }

\end{proof}

\comment{\color{blue}
Note in the above proof that if $\|1-a\epsilon \|>1$ then, using the fact that $\|J_\epsilon^*\|\leq 1$ along with the expansion \eqref{Leadj}, we see that
\begin{align*}
(\lep^*+a\mathcal{I})^{-1} &=\frac{\epsilon}{1-\epsilon a} \bigg(\frac{J_{\epsilon}^*}{1-\epsilon a}-\mathcal{I}\bigg)\\
&=-\frac{\epsilon}{1-\epsilon a}\bigg(\mathcal{I}+\frac{J_{\epsilon}^*}{1-\epsilon a}+\frac{J_{\epsilon}^{*2}}{(1-\epsilon a)^2}+\ldots\bigg)=\mathcal{O}(\epsilon).
\end{align*}
Note that in the second line above we have expanded the inverse as a Neumann series and in the last line we used that fact that $$\frac{\epsilon}{1-a\epsilon}=\epsilon(1+a\epsilon+\mathcal{O}(\epsilon^2)).$$ Thus the solution converges weakly at the rate $\mathcal{O}(\epsilon^2)$, which is faster than the $\mathcal{O}(\epsilon)$ convergence rate of the operator $L_{\epsilon}$ to $\mathcal{L}$. In Section~\ref{sec4}, we shall see from numerical experiments that the error in estimating $u$ is more accurate than that of $\mathcal{L}u$ in all examples, even for those for which $a=0$.
}

\subsection{Minimum norm solution}

In this section, we consider the case when $a = 0$ so that $L_{\epsilon}$ is not invertible. \comment{ In fact, in the previous section we saw that $\hat{G}_{\epsilon}$ is the semigroup operator associated with $\ko$. If the system is ergodic with equilibrium density $q$, then $\hat{G}_{\epsilon} \mathbbm{1}(x)=\mathbbm{1}(x)$ and $\hat{G}^*_{\epsilon} q=q$. In this case, $\norm{\hat{G}_{\epsilon}}\leq 1$ so that $\hat{G}_{\epsilon}-\mathcal{I}$ is not invertible. } In this case, the solutions of \eqref{PDE} and \eqref{approxlinearproblem} can instead be studied via the Fredholm alternative. 
\comment{Note that the Neumann boundary conditions are assumed when applicable. }
That is, due to the noninvertibility of $\lep$, we are in the second case of the Fredholm alternative: $\lep u=f$ is solvable if and only if$f \in \ker(\alep)^{\perp}$. In this case the solution is not unique and the numerical solution of the integral equation will in general not depend continuously on the data. Classically, integral equations of this type have been solved in one of the following ways: (1) by replacing the kernel so that the equation has an exact solution, (2) by using iterative methods (such as conjugate gradient descent),  (3) by restricting the solution to be of minimum norm \cite{lonseth1954}, or (4) by recasting the equation in a form that is uniquely solvable \cite{Atkinson1967}. In this section, we will find the unique minimum norm solution to \eqref{approxlinearproblem} using the generalized inverse $\lep^{\dagger}$ of $\lep$. Basically, if we impose the restriction that the solution must have minimum norm, then the numerical solution will depend continuously on the data. 

We first summarize (from \cite{KAMMERER1972547}) the relevant facts about the generalized inverse of a bounded linear operator. Let $X,Y$ be Hilbert spaces. Then any bounded linear operator $T:X\rightarrow Y$ decomposes $X$ and $Y$ as 
\BEA
X&=&\mathcal{N}(T)\oplus \mathcal{N}(T)^{\perp} \nonumber\\
Y&=&\mathcal{N}(T^*)\oplus \mathcal{N}(T^*)^{\perp}.\nonumber
\EEA

 Defining $P$ and $Q$ to be the orthogonal projection onto $\mathcal{N}(T)^{\perp}$, and $\mathcal{N}(T^*)^{\perp}$, respectively, we find that the problem $Tx=Qy$ has a solution for each $y \in Y$. The solution set is a convex subset of $X$ and contains a unique element of minimum norm. The generalized inverse $T^{\dagger}$ is defined as the linear operator that assigns to each $y \in Y$ the element of minimum norm among those that solve $Tx=Qy$. It may be the case that $T^{\dagger}$ is not bounded. However, for operators with closed range, the generalized inverse is bounded.  
In our case, the operator $\hat{G}_\epsilon - \mathcal{I}$, where $\hat{G}_\epsilon$ is compact, has closed range so we are guaranteed the existence of a bounded generalized inverse operator $\lep^{\dagger}$. With this background, we can establish the following result:

\begin{theorem}\label{thm3}
Assume $f \in \mathcal{N}(\al)^{\perp}$ and $\mathcal{L}$ is uniformly elliptic. Then $u_\epsilon: = L_\epsilon^\dagger f$ is a weak solution of \eqref{PDE} up to order $\epsilon$.  
\end{theorem}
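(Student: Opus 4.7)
The plan is to follow the weak-form manipulation of Proposition~\ref{prop1}, adjusted for the fact that the minimum norm solution satisfies $\lep u_\epsilon = Q_\epsilon f$ rather than $\lep u_\epsilon = f$. Here $Q_\epsilon$ denotes the orthogonal projection onto $\overline{\mathcal{R}(\lep)} = \mathcal{N}((\lep)^{\mathrm{adj}})^{\perp}$, and, as already noted in the text, $\lep = \epsilon^{-1}(J_\epsilon - \mathcal{I})$ has closed range with bounded generalized inverse, so $u_\epsilon$ is well defined and $\lep u_\epsilon = f - P_\epsilon f$ where $P_\epsilon := \mathcal{I} - Q_\epsilon$ is the projection onto $\mathcal{N}((\lep)^{\mathrm{adj}})$. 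Throughout, $(\lep)^{\mathrm{adj}}$ denotes the true $L^2$-adjoint of $\lep$, which (per the remark following \eqref{Leadj}) is not the same operator as $\alep$.

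Next I would repeat the chain of equalities in the proof of Proposition~\ref{prop1} verbatim. That calculation only uses the expansion \eqref{aex} and $(G_\epsilon 1)^{-1} = m^{-1} + \mathcal{O}(\epsilon)$; it never invokes $\lep u_\epsilon = f$. Applying it with the present $u_\epsilon$ and any smooth test $\phi$ (satisfying Neumann BC when $\partial\mathcal{M}\neq\emptyset$), together with $\lep u_\epsilon = f - P_\epsilon f$, gives
\begin{equation*}
\langle \ko u_\epsilon, \phi \rangle = \langle \lep u_\epsilon, \phi \rangle + \mathcal{O}(\epsilon) = \langle f, \phi \rangle - \langle P_\epsilon f, \phi \rangle + \mathcal{O}(\epsilon).
\end{equation*}
The theorem thus reduces to the estimate $\langle P_\epsilon f, \phi \rangle = \mathcal{O}(\epsilon)$. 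By the Fredholm alternative, $\mathcal{N}(\al)^{\perp} = \mathcal{R}(\ko)$, so the hypothesis yields $f = \ko g$ for some smooth $g$ (via elliptic regularity, possibly after approximating $f$ by smooth data). Setting $\psi := P_\epsilon f \in \mathcal{N}((\lep)^{\mathrm{adj}})$, which is smooth because it satisfies an integral equation with a smooth kernel, I compute
\begin{equation*}
\|P_\epsilon f\|^{2} = \langle \psi, f\rangle = \langle \psi, \ko g\rangle = \langle \al\psi, g\rangle = \langle (\al - (\lep)^{\mathrm{adj}})\psi,\, g\rangle = \langle \psi,\, (\ko - \lep)g\rangle,
\end{equation*}
where I used $(\lep)^{\mathrm{adj}}\psi = 0$ and the $L^2$-duality $\langle (\al - (\lep)^{\mathrm{adj}})\psi, g\rangle = \langle \psi, (\ko - \lep)g\rangle$. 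By \eqref{Le} the last factor is $\mathcal{O}(\epsilon)$ in $L^{2}$ with constant depending on $\|g\|_{C^{3}}$, so Cauchy--Schwarz gives $\|P_\epsilon f\| = \mathcal{O}(\epsilon)$ and hence $\langle P_\epsilon f, \phi\rangle = \mathcal{O}(\epsilon)$.

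The main obstacle is this last estimate of $\|P_\epsilon f\|$. The subtlety is that one cannot apply the Proposition~\ref{prop1} expansion to $(\lep)^{\mathrm{adj}}\psi - \al\psi$ directly, because the $C^{3}$ norm of $\psi$ is not uniformly controlled in $\epsilon$ (derivatives of the kernel $K(\epsilon,\cdot,\cdot)$ blow up like $\epsilon^{-k/2}$). The trick above sidesteps this by transferring the $\mathcal{O}(\epsilon)$ error onto the $\epsilon$-independent smooth function $g$, where the pointwise expansion \eqref{Le} applies cleanly; the mismatch between $(\lep)^{\mathrm{adj}}$ and $\alep$ turns out to be immaterial because only the difference $\al - (\lep)^{\mathrm{adj}}$ enters, and it is annihilated by pairing against $\psi$ via duality.
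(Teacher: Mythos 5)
Your proof is correct, and it takes a genuinely different (and arguably more careful) route than the paper's. The paper argues by density: it takes a sequence $f_n \in \mathcal{R}(\mathcal{L})$ converging to $f$, applies Proposition~\ref{prop1} to the corresponding $u_n$ with $\mathcal{L}u_n = f_n$, and asserts that this places $f$ in $\mathcal{R}(L_\epsilon)$ ``up to order $\epsilon$,'' which then gives $\langle L_\epsilon L_\epsilon^\dagger f, w\rangle = \langle f, w\rangle + \mathcal{O}(\epsilon)$. That last inference is the same fact you prove, namely $\|P_\epsilon f\| = \mathcal{O}(\epsilon)$ where $P_\epsilon$ projects onto $\mathcal{N}\bigl((L_\epsilon)^{\mathrm{adj}}\bigr)$, but the paper gets there by a weak-convergence argument against test functions, which leaves the uniformity of the $\mathcal{O}(\epsilon)$ constant (and the boundedness of $\|u_n\|$) implicit. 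Your approach sidesteps the sequence entirely: you isolate the defect $\psi := P_\epsilon f$, use the duality identity $\|\psi\|^2 = \langle \psi, (\mathcal{L}-L_\epsilon)g\rangle$ with $f = \mathcal{L}g$ and $(L_\epsilon)^{\mathrm{adj}}\psi = 0$, and then apply Cauchy--Schwarz together with the pointwise expansion \eqref{Le} on the $\epsilon$-independent smooth function $g$. The payoff is that you never have to control $C^3$ norms of $\epsilon$-dependent functions (which you correctly note would blow up), whereas the paper's density argument leaves that issue unaddressed. What your approach requires that the paper's does not is the integration by parts $\langle \psi, \mathcal{L}g\rangle = \langle \mathcal{L}^*\psi, g\rangle$; on a closed manifold this is free, but in the Neumann case one should check that no boundary terms survive, since $\psi$ is only characterized by the integral equation $J_\epsilon^{\mathrm{adj}}\psi = \psi$ and does not obviously satisfy Neumann conditions. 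The paper is no more careful on this point, so this is a shared gap rather than a flaw unique to your argument.
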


\begin{proof}
Let $f_n \in \mathcal{R}(\mathcal{L}) $ be a sequence that converges to $f \in \overline{\mathcal{R}(\mathcal{L})} = \mathcal{N}(\al)^{\perp}$. Then, there exists $u_n\in H^1(\mathcal{M})$ such that $\forall w\in C^\infty(\mathcal{M})$,
\BEA
\langle \mathcal{L}u_n, w\rangle= \langle f_n, w\rangle =  \langle f, w\rangle + \mathcal{O}(\epsilon).\nonumber
\EEA
From proposition~\ref{prop1}, $\mathcal{L}u_n = L_\epsilon u_n + \mathcal{O}(\epsilon)$ weakly, so,
\BEA
\langle L_\epsilon u_n, w\rangle= \langle f, w\rangle + \mathcal{O}(\epsilon),\nonumber
\EEA
which means that  $f\in \mathcal{R}(L_\epsilon)$ up to order $\epsilon$. This implies that if $u_\epsilon =  L_\epsilon^\dagger f$, then 
\BEA
\langle L_\epsilon u_\epsilon, w\rangle = \langle L_\epsilon L_\epsilon^\dagger f, w \rangle  = \langle f, w\rangle + \mathcal{O}(\epsilon).
\EEA
We have shown that $u_\epsilon$ is a weak solution of the approximate linear problem in \eqref{approxlinearproblem} up to order $\epsilon$. By Proposition~\ref{prop1}, $u_\epsilon$ is a weak solution of \eqref{PDE} up to order $\epsilon$. 
\end{proof}

\subsection{Discrete Approximation}\label{section33}

Here we outline the procedure to construct a numerical approximation $\hat{L}_{\epsilon}$ of $\lep$ in the case when the nodes, drift and diffusion tems are given either in the intrinsic coordinates of $\mathcal{M}$ with known embedding $\iota$ or in the ambient space with unknown embedding. The only difference between the two cases is that if the embedding is known then we can represent the solution in the intrinsic coordinates rather than in $\R^n$. However, in both cases, the kernel is formed by evaluating the nodes in $\R^n$. Note that we discretize continuous functions and tensors by  representing them in the delta basis of the nodes and approximate integrals using Monte-Carlo averages.

We first describe the approximation procedure in the first case under the simple setting when the $d$-dimensional manifold $\mathcal{M}$ is embedded in $\R^n$ and $(x_i)_{i=1}^N \in \R^n$ are uniformly spaced nodes. We discretize $b(x)$ and $c(x)$ as  $b(x_i)$ and $c(x_i)$ for each $i$ and lift these into $\R^n$ using \eqref{Bb}, \eqref{Cc}.  We can now use the usual Monte-Carlo approach to approximate $L_{\epsilon}$:

\begin{enumerate}[1]
\item Discretize $K(\epsilon,x,y)$ as an $N \times N$ matrix $\hat{K_\epsilon}$ where $(\hat{K_{\epsilon}})_{ij}=K(\epsilon,x_i,x_j)$.
\item  For each $i$, approximate $G_\epsilon1(x_i)=\int_\mathcal{M} K(\epsilon,x_i,y) \ dy$ as \\   $(\hat{G_{\epsilon}}\vec{1})_i:= \frac{1}{N}\sum_{j=1}^N(\hat{K_{\epsilon}})_{ij},$ where $\vec{1}$ denote an $N$-dimensional vector with 1 as its components. Create an $N \times N$ diagonal matrix $D$ with $D_{ii}=(\hat{G_{\epsilon}}\vec{1})_i$.
\item Discretize $\hat S(\epsilon,x,y)$ as an $N \times N$ matrix $\hat{S_{\epsilon}}$ where $$\hat{S_\epsilon}:= D^{-1} \hat K_{\epsilon}.$$
\item Finally, $\lep$ is approximated by the $N\times N$ matrix $$\hat \lep:=\frac{1}{\epsilon}\left( \hat S_{\epsilon}-\mathcal{I}_N\right)=\frac{1}{\epsilon}\left(D^{-1}\hat K_\epsilon-\mathcal{I}_N\right),$$ 
where $\mathcal{I}_N$ denotes an identity matrix of size $N\times N$.
\end{enumerate}
Since the kernel is exponentially decaying, it is usual practice to use a $k$-nearest neighbors algorithm to introduce sparsity into the matrix approximation of $\hat K$. 

Following \cite{Harlim}, we can tune $\epsilon$ by defining $Q(\epsilon)=\frac{1}{N^2}\sum_{i,j}K(\epsilon,x_i,x_j)$ and searching for the region where $\log(Q(\epsilon))$ grows linearly. Empirical results suggest that the dimension $d$ of $\mathcal{M}$ can be approximated by $$d=2\times \mathrm{max}\left\{\frac{d\log(Q(\epsilon))}{d\log(\epsilon)}\right\}$$ and we set $\epsilon$ to be the value corresponding the estimated $d$. While this automated tuning strategy may not necessarily give the best estimates on the resulting operator estimation, it is convenient and numerically cheap. For a theoretically justified yet computationally more elaborate technique, one can also use the local singular value decomposition technique deduced in \cite{IDM}. In our numerical experiments below, we will only use either the empirical tuning mentioned above or a simple empirical tuning by comparing the estimates to the true solution when the latter is known.

Next, we relax the uniform spacing assumption and assume that $(x_i)_{i=1}^N \in \R^n$ are sampled independently from a density $q$. In this case, every integral (or discrete Monte-Carlo approximation) is with respect to the sampling density $q$ and therefore we need to modify the algorithm to debias this sampling effect.
To do this, define the exponential kernel $$h(\tilde\epsilon,x,y):= \exp \left(-\frac{\|x-y\|^2}{2\tilde\epsilon}\right),$$ and let $H_{\tilde\epsilon} u(x)=\te^{-d/2}\int_\mathcal{M} h(\tilde\epsilon,x,y)u(y) \ dy$ be the corresponding Fredholm operator.  Subsequently define 
\BEA \nonumber q_{\tilde\epsilon} (x)&:=&H_{\te}q(x)\\  \nonumber \gq u(x)&:= &G_\epsilon(q(x)u(x)) \\ \nonumber  \tilde{q}_{\epsilon,\te}(x)&:=&\gq\left(\qt(x)^{-1}\right)\\
L_{\epsilon,\te} u(x)&:=&\frac{1}{\epsilon}\left(\tilde{q}_{\epsilon,\te}^{-1}(x)\gq\left( u(x) q_{\te}^{-1}(x)\right)-u(x)\right) \label{Lee}
\EEA 

The next proposition shows how to debias the Kolmogorov operator.

\begin{proposition}\label{prop4}
\label{bias} For any $u \in C^3(\mathcal{M})$, where $\mathcal{M}$ denotes a $d$-dimensional manifold embedded in $\mathbb{R}^n$,
$$L_{\epsilon,\te} u(x)=\ko u(x)+\mathcal{O}(\te,\epsilon),$$ for each $x\in\mathcal{M}$.
\end{proposition}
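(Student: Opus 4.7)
The plan is to unfold the definition of $L_{\epsilon,\te}u(x)$ into a ratio of two instances of $G_\epsilon$, apply the basic local-kernel expansion \eqref{gef} to each, and verify that the bias from sampling by the density $q$ is removed to leading order by the normalization through $\qt$.

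I would begin by introducing the shorthand $\phi := q/\qt$, so that $\gq(v\,\qt^{-1})(x) = G_\epsilon(v\phi)(x)$ for any smooth $v$. Applying \eqref{gef} to $\qt = H_{\te}q$ (the kernel $h$ corresponds to taking $B\equiv 0$ and $C\equiv \mathcal{I}$, so its associated second-order operator is $\tfrac{1}{2}\Delta$ and its zeroth moment $m_0 = (2\pi)^{d/2}$ is constant in $x$) gives
$$\qt(x) = m_0\,q(x) + \te\bigl(\omega_0(x)q(x) + \tfrac{1}{2}m_0\,\Delta q(x)\bigr) + \mt.$$
Inverting and multiplying by $q(x)$ yields an expansion $\phi(x) = m_0^{-1} + \te\,\psi(x) + \mt$ for some smooth $\psi$. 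The crucial consequence, because $m_0$ is constant, is that $\nabla\phi(x) = \te\,\nabla\psi(x) + \mt = \mathcal{O}(\te)$ and $\phi^{-1}(x) = m_0 + \mathcal{O}(\te)$.

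Next I would apply \eqref{gef} to both factors appearing in $L_{\epsilon,\te}$:
\begin{align*}
G_\epsilon(u\phi)(x) &= m(x)u(x)\phi(x) + \epsilon\bigl(\omega(x)u(x)\phi(x) + m(x)\ko(u\phi)(x)\bigr) + \me,\\
\tilde{q}_{\epsilon,\te}(x) = G_\epsilon(\phi)(x) &= m(x)\phi(x) + \epsilon\bigl(\omega(x)\phi(x) + m(x)\ko\phi(x)\bigr) + \me.
\end{align*}
Factoring $m(x)\phi(x)$ out of each and expanding the reciprocal denominator geometrically to first order in $\epsilon$, the $\omega/m$ contributions cancel between numerator and denominator, leaving
$$\frac{G_\epsilon(u\phi)(x)}{G_\epsilon(\phi)(x)} = u(x) + \epsilon\,\phi(x)^{-1}\bigl[\ko(u\phi)(x) - u(x)\,\ko\phi(x)\bigr] + \me.$$

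The last step is to collapse the bracket using the Leibniz identity for the generator,
$$\ko(u\phi) = u\,\ko\phi + \phi\,\ko u + c^{ij}\,\nabla_i u\,\nabla_j\phi,$$
which follows from applying the product rule to $b\cdot\nabla$ and to $\tfrac{1}{2}c^{ij}\nabla_i\nabla_j$ together with the symmetry of $c$. The $\epsilon$-coefficient becomes $\ko u + \phi^{-1}c^{ij}\nabla_i u\,\nabla_j\phi$, and since $\nabla\phi = \mathcal{O}(\te)$ while $\phi^{-1} = \mathcal{O}(1)$, the unwanted cross term is $\mathcal{O}(\te)$. Subtracting $u(x)$ and dividing by $\epsilon$ then gives $L_{\epsilon,\te}u(x) = \ko u(x) + \mathcal{O}(\te,\epsilon)$, as claimed.

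The principal obstacle is bookkeeping through the ratio: one must verify that the $\omega/m$ terms (which are $\mathcal{O}(1)$ in $\te$) cancel exactly in numerator and denominator, and that the expansion of $\phi = q/\qt$ to first order in $\te$ is sufficient to control the residual $c^{ij}\nabla_i u\,\nabla_j \phi$. This is ultimately the mechanism of debiasing: the gradient of the leading constant $m_0^{-1}$ vanishes, so the spurious drift term produced by the Leibniz rule is suppressed by one full power of $\te$, which is precisely what is needed for the claim.
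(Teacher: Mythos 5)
Your proof is correct and follows essentially the same route as the paper's: both unfold $L_{\epsilon,\te}u$ into the ratio $G_\epsilon(u\phi)/G_\epsilon(\phi)$ with $\phi=q\,q_{\te}^{-1}$, expand each factor via \eqref{gef} and $q_{\te}$ via the isotropic kernel expansion, and rely on the fact that the leading coefficient of $q/q_{\te}$ is the constant $\tilde{m}_0^{-1}$ so that the residual advection is suppressed to $\mathcal{O}(\te)$. The only difference is bookkeeping: you isolate the cross term explicitly through the Leibniz identity $\mathcal{L}(u\phi)=u\mathcal{L}\phi+\phi\mathcal{L}u+c^{ij}\nabla_i u\,\nabla_j\phi$, whereas the paper substitutes $q_{\te}^{-1}=\tilde{m}_0^{-1}q^{-1}\left(1+\mathcal{O}(\te)\right)$ so that the $q$'s cancel inside the argument of $\mathcal{L}$ before differentiating.
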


\begin{proof}
From \cite{cl:06}, we have
\[H_{\te} u(x):= \te^{-d/2} \int_{\mathcal{M}} h(\te,x,y) u(y)\, dy = \tmo u(x)+\te\tmi(\tom(x)u(x)+\Delta u(x))+\mathcal{O}(\te^2),\] 
where $\tilde{m}_0 := \int_{\mathbb{R}^d} h(\|z\|^2)\,dz$, $\tilde{m}_1 := \frac{1}{2} \int_{\mathbb{R}^d} z_1^2 h(\|z\|^2)\,dz$ and $\omega$ depends on the induced geometry of $\mathcal{M}$.
Consequently 
$$\qt^{\alpha}=\tmo^{\alpha} q^{\alpha}\left(1+\te \tm\tom q+\te\tm q^{-1}\Delta q\right)^{\alpha}+\mt,$$
 where $\tm=\tmi/\tmo$. Note that we have introduced the real parameter $\alpha$ for computational convenience in proving the next lemma as shown in the appendix.

From \eqref{gef}, we can deduce
\BEA
\gq\left(u q_{\te}^{-\alpha}\right)&=& mu q \tilde{q}_{\epsilon,\te}^{-\alpha}\left(1+\epsilon m^{-1}\omega +\epsilon (uq)^{-1}\tilde{q}_{\epsilon,\te}^{\alpha}\ko(uq\tilde{q}_{\epsilon,\te}^{-\alpha})\right)+\me \nonumber\\
&=& m\tmo^{-\alpha} uq^{1-\alpha}\left(1-\alpha\te \tm\tom -\alpha\te\tm q^{-1}\Delta q+\mt\right) \nonumber\\
 &&\hspace{.5 in} \times \left(1+\epsilon m^{-1}\omega +\epsilon\frac{\ko(uq^{1-\alpha})}{uq^{1-\alpha}}+\mathcal{O}(\epsilon\te,\epsilon^2)\right)\nonumber\\
&=& m\tmo^{-\alpha} uq^{1-\alpha}  \left(1-\alpha\te \tm\tom -\alpha\te\tm q^{-1}\Delta q+\epsilon m^{-1}\omega +\epsilon\frac{\ko(uq^{1-\alpha})}{uq^{1-\alpha}} \right)\nonumber \\
&&\hspace{.5 in} + \mathcal{O}(\epsilon\te,\epsilon^2,\te^2).\label{Gqe}
\EEA

\noindent Setting $u(x)=1(x)$ and $\alpha=1$, we obtain 
\BEA 
\nonumber
\tilde{q}_{\epsilon,\te} &= m\tmo^{-1}  \left(1-\te \tm\tom -\te\tm q^{-1}\Delta q+\epsilon m^{-1}\omega \right)  + \mathcal{O}(\epsilon\te,\epsilon^2,\te^2). 
\EEA
Therefore, setting $\alpha = 1$, we have,
\begin{align*}
\tilde{q}_{\epsilon,\te}^{-1}\gq\left(uq_{\te}^{-1}\right)&=u \left(1-\te \tm\tom -\te\tm q^{-1}\Delta q+\epsilon m^{-1}\omega +\epsilon\frac{\ko(u)}{u} \right)\\
&\hspace{.5 in} \times \left(1+\te \tm\tom +\te\tm q^{-1}\Delta q-\epsilon m^{-1}\omega \right)  + \mathcal{O}(\epsilon\te,\epsilon^2,\te^2)\\
&= u  + \epsilon \ko u + \mathcal{O}(\epsilon\te,\epsilon^2,\te^2).
\end{align*}
By \eqref{Lee}, the result follows immediately.

\end{proof}

To show that Proposition~\ref{prop1} still holds when $L_\epsilon$ in \eqref{approxlinearproblem} is replaced by $L_{\epsilon,\te}$ as defined in \eqref{Lee}, it is sufficient to note that for any 
 smooth $\phi$,
\begin{align*}
qq_{\te}^{-1}G_{\epsilon}^*(\tilde{q}^{-1}_{\epsilon,\te}\phi)&=qq_{\te}^{-1}\bigg(m\tilde{q}_{\epsilon,\te}^{-1}\phi+\epsilon\bigg(\omega \tilde{q}_{\epsilon,\te}^{-1}\phi+\ko^*(m\tilde{q}_{\epsilon,\te}^{-1}\phi)\bigg)\bigg)+\mathcal{O}(\epsilon^2)\\
&=\tmo^{-1}\bigg(\tmo(1-\epsilon m^{-1}\omega)\phi+\epsilon\bigg(\omega m^{-1}\tmo\phi+\ko^*(\tmo\phi)\bigg)+\mathcal{O}(\epsilon,\te)\\
&=\phi+\epsilon\ko^*\phi+\mathcal{O}(\epsilon,\te).
\end{align*}
Consequently, Theorems~\ref{thm2} and \ref{thm3} hold when $L_{\epsilon}$ is replaced by $L_{\epsilon,\te}$.

\comment{
To show that this solution still converges when the operator is biased, we show that proposition \ref{prop1} still holds for the biased operator $L_{\epsilon,\te}$. To this end, it is enough to note that for any smooth $\phi$,
\begin{align*}
qq_{\te}^{-1}G_{\epsilon}^*(\tilde{q}^{-1}_{\epsilon,\te}\phi)&=qq_{\te}^{-1}\bigg(m\tilde{q}_{\epsilon,\te}^{-1}\phi+\epsilon\bigg(\omega \tilde{q}_{\epsilon,\te}^{-1}\phi+\ko^*(m\tilde{q}_{\epsilon,\te}^{-1}\phi)\bigg)\bigg)+\mathcal{O}(\epsilon^2)\\
&=\tmo^{-1}\bigg(\tmo(1-\epsilon m^{-1}\omega)\phi+\epsilon\bigg(\omega m^{-1}\tmo\phi+\ko^*(\tmo\phi)\bigg)+\mathcal{O}(\epsilon,\te)\\
&=\phi+\epsilon\ko^*\phi+\mathcal{O}(\epsilon,\te).
\end{align*}

Consequently, Proposition~\ref{prop1} still holds. That is, if $u_{\epsilon}$ is a weak solution to $$(a+L_{\epsilon,\te})u_{\epsilon}=f$$ then $u_{\epsilon}$ is a weak solution to \eqref{PDE} up to $\mathcal{O}(\epsilon,\te)$. This implies that Theorem~\ref{thm2} and \ref{thm3} hold for the solution to the debiased operator.}

The previous proposition justifies the use of the following discretization scheme to debias the Kolmogorov operator from non-uniformly distributed samples $\{x_i\}$:

\begin{enumerate}[1]
\item Discretize $h_{\tilde\epsilon}(x,y)$  as the $N\times N$ matrix $\hat h_{\epsilon}$, respectively.
\item  For each $i$, approximate $q_{\te}(x_i)$ with $(\hat H_{\tilde\epsilon} \vec{1})_i=\sum_{j=1}^N(\hat H_{\tilde\epsilon})_{ij}$. Create an $N \times N$ diagonal matrix $D_1$  with $(D_1)_{ii}=(\hat{H}_{\tilde\epsilon}\vec{1})_i$. 
\item (Debiasing Step) Discretize the kernel of the integral operator $G_{q,\epsilon}(uq_{\te}^{-1}$) by right normalizing $\hat{K}$. That is, approximate $K(\epsilon,x,y)q_{\te}^{-1}(y)$ by setting $\hat K \leftarrow \hat K D_1^{-1}$.  
\item Proceed with the left normalization and the formation of $L_{\epsilon,\te}$ as in steps $2-4$ in the uniform case.  In the rest of this section, we denote the discrete estimate as $\hat{L}_{\epsilon,\te}$.

\end{enumerate}

Finally, an approximate solution to \eqref{PDE} is given by a solution $\hat u$ of $(a+\hat L_{\epsilon,\te}) \hat{u} =\vec{f}$.
For appropriate $a$ and $\epsilon$, $a\mathcal{I}+L_{\epsilon,\te}$ is invertible as discussed in Section~\ref{section31}.  In this case, we will show that the discrete approximation constructed using the preceding algorithm is also invertible and that $\hat{u}=(a\mathcal{I}+\hat{L}_{\epsilon,\te})^{-1}\vec{f}$ approximates the solution $u$. In the case that $a\mathcal{I}+\hat{L}_{\epsilon,\te}$ is not invertible, we can form the pseudoinverse $(a\mathcal{I}+\hat{L}_{\epsilon,\te})^{\dagger}$ to approximate the minimum norm solution of \eqref{approxlinearproblem}.  

The convergence rate of the approximate solution depends on the following consistency result:

\begin{lemma}\label{lemma5}
Let $x_i\in\mathcal{M}\subseteq \mathbb{R}^n$ for $i=1,\ldots, N$ be i.i.d.~samples with sampling density $q(x)\in C^{3}(\mathcal{M})$ defined with respect to the volume form inherited by the $d$-dimensional manifold $\mathcal{M}$ from the ambient space $\mathbb{R}^n$. For any $u\in C^3(\mathcal{M})$, 
\[ | \hat{L}_{\epsilon,\te} u(x_i)-\mathcal{L} u(x_i) |= \mathcal{O}\left(\epsilon,\te,\frac{q(x_i)^{1/2}}{\sqrt{N}\te^{2+d/4}},\frac{\|\nabla_{\tilde{g}} u(x_i) \| q(x_i)^{-1/2 }}{\sqrt{N}\epsilon^{1/2+d/4}}\right),\]
in probability. Here, $L_{\epsilon,\te}$ is defined in \eqref{Lee}, the gradient operator is defined with respect to a new metric, $\tilde{g}(u,v) := g(c^{-1/2}u,c^{-1/2}v)$ for all $u,v \in T_{x}\mathcal{M}$, where $g$ denotes the Riemannian metric inherited by $\mathcal{M}$ from the ambient space and $c$ denotes the symmetric positive definite diffusion tensor.
\end{lemma}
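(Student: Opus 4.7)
The plan is to decompose the total error by the triangle inequality,
\[
\hat{L}_{\epsilon,\te}u(x_i) - \mathcal{L}u(x_i) = \bigl(\hat{L}_{\epsilon,\te}u(x_i) - L_{\epsilon,\te}u(x_i)\bigr) + \bigl(L_{\epsilon,\te}u(x_i) - \mathcal{L}u(x_i)\bigr),
\]
and dispose of the second piece immediately with Proposition~\ref{prop4}, which already delivers the two bias contributions $\mathcal{O}(\epsilon,\te)$. Everything non-trivial then lives in the Monte-Carlo discretization error $\hat{L}_{\epsilon,\te}u(x_i) - L_{\epsilon,\te}u(x_i)$, which I would analyze by a variance calculation on each sample average appearing in the algorithm in Section~\ref{section33}, and then convert to an in-probability bound via Chebyshev's inequality.

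My next step is to isolate the two independent random-sampling error sources by writing
\[
\hat{L}_{\epsilon,\te}u(x_i) = \frac{1}{\epsilon}\Biggl(\frac{\hat{G}_{q,\epsilon}(u\hat{q}_{\te}^{-1})(x_i)}{\hat{G}_{q,\epsilon}(\hat{q}_{\te}^{-1})(x_i)} - u(x_i)\Biggr),
\]
where $\hat{q}_{\te}(x_j) = \te^{-d/2}N^{-1}\sum_k h(\te,x_j,x_k)$ is the bandwidth-$\sqrt{\te}$ density estimate. Linearizing the ratio and the reciprocal $\hat{q}_{\te}^{-1}$, the stochastic error splits cleanly into a contribution driven by the $h$-kernel samples (those producing $\hat{q}_{\te}$) and a contribution driven by the $K$-kernel samples (those producing $\hat{G}_{q,\epsilon}$ of numerator and denominator). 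Because the samples $x_k$ are drawn from $q$, each sample-average variance is bounded by an $L^2$-integral of the corresponding kernel against $q$: standard Gaussian-width bookkeeping gives $\int h(\te,x,\cdot)^2 q \sim \te^{d/2}q(x)$ and analogously $\int K(\epsilon,x,\cdot)^2 q \sim \epsilon^{d/2}q(x)\det(c(x))^{1/2}$. Combined with the kernel normalizations $\te^{-d/2}$, $\epsilon^{-d/2}$ and the $1/\epsilon$ prefactor of $L_{\epsilon,\te}$, these will translate into the $q(x_i)^{1/2}/(\sqrt{N}\,\te^{d/4})$ and $|u(x_i)|q(x_i)^{-1/2}/(\sqrt{N}\,\epsilon^{d/4})$ scales.

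The main obstacle -- and where I expect most of the work to be -- is sharpening the $K$-kernel variance so that $|u(x_i)|$ is replaced by $\|\nabla_{\tilde{g}}u(x_i)\|$ (producing the $\epsilon^{1/2+d/4}$ denominator), and showing why the $\te$-error carries an extra $\te^{-2}$ factor. For the first, note that the subtraction $\hat{G}_{q,\epsilon}(u\hat{q}_{\te}^{-1})/\hat{G}_{q,\epsilon}(\hat{q}_{\te}^{-1}) - u(x_i)$ cancels the leading $K$-mass exactly (this is the same cancellation that makes the bias $\mathcal{O}(\epsilon)$), so the effective integrand in the variance is $K(\epsilon,x_i,y)^2\bigl(u(y)-u(x_i)\bigr)^2$; by Taylor expansion around $x_i$ the $(u(y)-u(x_i))^2$ contributes $\sim\epsilon\|\nabla_{\tilde{g}}u\|^2$ after the $C$-weighted Gaussian moment is taken (the $C = c$-anisotropy in \eqref{Bb}--\eqref{Cc} is precisely what converts the Euclidean gradient into $\nabla_{\tilde{g}}$), yielding an extra factor of $\epsilon$ in variance and hence $\sqrt{\epsilon}$ in standard deviation. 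For the $\te$-term, the $\hat{q}_{\te}^{-1}$ errors enter $\hat{L}_{\epsilon,\te}$ only through differences whose leading order cancels (as verified in the post-Proposition~\ref{prop4} computation), so the surviving contribution is the $\mathcal{O}(\te)$ correction from the asymptotic expansion of $q_{\te}$ -- essentially a $\Delta q$-type amplification -- which, combined with the $1/\epsilon$ factor and a matching of $\epsilon$ to $\te^{\mathrm{const}}$ implicit in the expansion, produces the extra $\te^{-2}$.

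Finally, once the variances are in hand I would invoke Chebyshev's inequality at each $x_i$ to convert them into the stated in-probability statement; the $q(x_i)^{\pm 1/2}$ factors emerge naturally from the $q$-weighted $L^2$ kernel norms above, and the hypothesis $u,q\in C^3(\mathcal{M})$ is what licenses the Taylor remainder estimates and the bias step from Proposition~\ref{prop4}. Bookkeeping all four contributions together gives the four-term $\mathcal{O}$-bound in the lemma.
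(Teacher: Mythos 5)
Your decomposition is the same as the paper's, and most of your plan is sound: bias $\mathcal{O}(\epsilon,\te)$ from Proposition~\ref{prop4} plus a Monte-Carlo discretization error analyzed by a variance bound and a concentration inequality, with the cancellation of the constant mode in $J_\epsilon - \mathcal{I}$ being exactly what turns $|u|$ into a factor of $\|\nabla_{\tilde g}u\|$ together with the extra $\epsilon^{1/2}$. Your intuition that the effective variance integrand is $K^2(u(y)-u(x_i))^2$ and that the $C$-weighted Gaussian second moment converts the Euclidean gradient into $\nabla_{\tilde g}$ is correct. The paper reaches the same endpoint by a slightly different route: it cross-multiplies the ratio rather than linearizing, defines the zero-mean variable $Y_j=\mathbb{E}(G_i)F_i(x_j)-\mathbb{E}(F_i)G_i(x_j)+a\epsilon\mathbb{E}(G_i)(\mathbb{E}(G_i)-G_i(x_j))$ with $F_i,G_i$ the kernel samples weighted by $\hat q_{\te}^{-1}$, invokes a Chernoff bound following \cite{singer2006graph} rather than Chebyshev, and computes $\mathrm{Var}(Y_j)$ from the explicit second moments $\mathbb{E}(F_i^2),\mathbb{E}(G_i^2),\mathbb{E}(F_iG_i)$; the extraction of $\|\nabla_{\tilde g}u\|^2$ is then packaged as the clean algebraic identity $\ko(u^2q^{-1})+u^2\ko q^{-1}-2u\ko(uq^{-1})=q^{-1}\|\nabla_{\tilde g}u\|^2$, which relies on Lemma~4.2 of \cite{bs:16} to write $c_{ij}\nabla_i\nabla_j=\Delta_{\tilde g}+\kappa\cdot\nabla$ so that the first-order (advection and $\kappa$) parts telescope away and the Laplacian product rule produces the pure gradient-squared. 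Both routes to $\|\nabla_{\tilde g}u\|$ are legitimate; the paper's identity keeps careful track of the $q^{-1}$ weights sitting inside $F_i,G_i$, which your ``Taylor-expand $(u(y)-u(x_i))^2$ against $K^2$'' sketch quietly drops.

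There is one genuine gap. Your explanation of the $\te^{-(2+d/4)}$ term --- an ``$\epsilon\text{-to-}\te^{\text{const}}$ matching'' plus a $\Delta q$ amplification --- is not right; there is no such matching in this lemma, and the heuristic does not produce the stated exponent. In the paper this term is not derived either: it is the Monte-Carlo error in the kernel density estimate $\hat q_{\te}$, and the paper cites Appendix~B.1 of \cite{bh:15vb} directly for the bound $\mathcal{O}\bigl(q^{1/2}/(\sqrt N\,\te^{2+d/4})\bigr)$. If you want a self-contained argument you would need to redo that variance/bias calculation for the fixed-bandwidth Gaussian $h_{\te}$; otherwise, cite as the paper does. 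You should also note explicitly that the ``i.i.d.'' variance computation on the sums $\sum_j F_i(x_j)$ only holds after replacing $\hat q_{\te}$ by $q_{\te}$ inside $F_i,G_i$, which is precisely what the separate $\te^{-(2+d/4)}$ term is accounting for; without that remark the independence assumption is circular.
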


The proof of this error bound follows closely the technique in \cite{singer2006graph,bh:15vb} and is given in the Appendix. We should point out that the notation $\|\cdot \|$ in this error bound denotes the norm with respect to the Riemannian metric $\tilde{g}$. The error term $\mathcal{O}(\epsilon,\te)$ describes the error of the continuous operator, $L_{\epsilon,\te}$, established in Proposition~\ref{prop4}. The third term, $\mathcal{O}\big(\frac{q(x_i)^{1/2}}{\sqrt{N}\te^{2+d/4}}\big)$ is the sampling error for obtaining an order-$\te^2$ estimate of $q_{\te}(x_i) = \te^{-d/2}\int_{\mathcal{M}} h(\te,x_i,y) q(y)dy$. The last error term, $\mathcal{O}(\frac{\|\nabla_{\tilde{g}} u(x_i) \| q(x_i)^{-1/2 }}{\sqrt{N}\epsilon^{1/2+d/4}})$, describes the error of approximating $L_{\epsilon,\te} u(x_i)$ with $\hat{L}_{\epsilon,\te} u(x_i)$. Notice that this error is large when $q$ is small.

From the error bound in Lemma~\ref{lemma5}, we can deduce:
\begin{theorem}\label{thm6}
Let $x_i\in\mathcal{M}\subseteq \mathbb{R}^n$ for $i=1,\ldots, N$ be i.i.d.~samples with positive sampling density $q\in C^{3}(\mathcal{M})$ defined with respect to the volume form inherited by the $d$-dimensional manifold $\mathcal{M}$ from the ambient space $\mathbb{R}^n$. For any $u\in C^3(\mathcal{M})$ that is the solution of \eqref{PDE} with strictly negative definite $a\mathcal{I}+\mathcal{L}$, the uniform error in estimating $u$ with $\hat{u}:=(a\mathcal{I}+\hat{L}_{\epsilon,\te})^{-1}\vec{f}$ is
\BEA
\|u-\hat{u} \|_\infty =\mathcal{O} \left(\epsilon,\te,\frac{ 1}{\sqrt{N}\te^{2+d/4}},\frac{1}{\sqrt{N}\epsilon^{1/2+d/4}}\right)\label{discreteerror}
\EEA
in probability. Here, the uniform norm is defined over $\mathbb{R}^N$.
\end{theorem}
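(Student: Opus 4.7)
The plan is to control $\|\vec{u}-\hat{u}\|_\infty$ by inverting the discrete operator and applying the pointwise consistency bound from Lemma~\ref{lemma5}, mirroring at the discrete level the weak-solution argument used in the proof of Theorem~\ref{thm2}.

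First I would write the residual identity at the sample points. Let $\vec{u}\in\mathbb{R}^N$ have entries $u(x_i)$; since $(a+\ko)u(x_i)=f(x_i)$ exactly and $\hat{u}$ solves $(a\mathcal{I}_N+\hat{L}_{\epsilon,\te})\hat{u}=\vec{f}$, subtraction gives
\[
(a\mathcal{I}_N+\hat{L}_{\epsilon,\te})(\vec{u}-\hat{u}) = (\hat{L}_{\epsilon,\te}-\ko)\vec{u},
\]
so that $\|\vec{u}-\hat{u}\|_\infty \le \|(a\mathcal{I}_N+\hat{L}_{\epsilon,\te})^{-1}\|_\infty\,\|(\hat{L}_{\epsilon,\te}-\ko)\vec{u}\|_\infty$. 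The second factor is controlled nodewise by Lemma~\ref{lemma5}, while the first factor must be bounded independently of $N$.

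Next I would establish the bound $\|(a\mathcal{I}_N+\hat{L}_{\epsilon,\te})^{-1}\|_\infty \le \alpha^{-1}$. By construction the left-normalized matrix $\hat{J}_{\epsilon,\te}$ produced in the algorithm is row-stochastic, so $\|\hat{J}_{\epsilon,\te}\|_\infty \le 1$. Decomposing $a\mathcal{I}_N+\hat{L}_{\epsilon,\te}=D_a+\epsilon^{-1}\hat{J}_{\epsilon,\te}$ with $D_a=\mathrm{diag}(a(x_i)-\epsilon^{-1})$ and using the hypothesis $\langle au,u\rangle \le -\alpha\|u\|^2$ (which forces $a(x_i)\le -\alpha$ for every $i$), I would factor out $D_a$ and apply a Neumann series to $\mathcal{I}_N+\epsilon^{-1}D_a^{-1}\hat{J}_{\epsilon,\te}$ exactly as in the proof of Theorem~\ref{thm2}; the series converges since $\|\epsilon^{-1}D_a^{-1}\hat{J}_{\epsilon,\te}\|_\infty\le (1+\alpha\epsilon)^{-1}<1$, and telescoping yields the uniform $\alpha^{-1}$ bound.

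Finally I would apply Lemma~\ref{lemma5} at each $x_i$ and combine. Compactness of $\mathcal{M}$ together with the strict positivity and $C^3$ regularity of $q$ renders $q(x_i)^{\pm 1/2}$ uniformly bounded, and $u\in C^3(\mathcal{M})$ gives a uniform bound on $\|\nabla_{\tilde{g}}u(x_i)\|$; these collapse the $q$-dependent factors of Lemma~\ref{lemma5} into constants, producing the nodewise rate displayed in \eqref{discreteerror}. A union bound over the $N$ sample points (whose $\log N$ factor is absorbed into the asymptotic rate) upgrades the pointwise-in-probability consistency to a statement that holds uniformly in $i$ with the same order. The step I expect to be the main obstacle is ensuring that the bound $\|(a\mathcal{I}_N+\hat{L}_{\epsilon,\te})^{-1}\|_\infty \le \alpha^{-1}$ is genuinely deterministic and uniform in $N$, so that it can be factored cleanly out of the stochastic error from Lemma~\ref{lemma5}; the key observation enabling this is that the row-stochasticity of $\hat{J}_{\epsilon,\te}$ is guaranteed by the normalization steps of the algorithm regardless of the sample realization, so the operator-inverse bound is deterministic while only the consistency residual retains the probabilistic character.
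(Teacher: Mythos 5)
Your proposal is correct and reaches the same end bound $\|(a\mathcal{I}_N+\hat{L}_{\epsilon,\te})^{-1}\|_\infty \le \alpha^{-1}$ that the paper uses, but you obtain it by a genuinely different route. The paper writes $a\mathcal{I}_N + \hat{L}_{\epsilon,\te} = \epsilon^{-1}A$ with $A := \hat{J}_{\epsilon,\te} - (1-\epsilon a)\mathcal{I}_N$, observes that $A$ is strictly diagonally dominant (because $\hat{J}_{\epsilon,\te}$ is row-stochastic with nonnegative entries and $a(x_i)<0$), and invokes the Ahlberg--Nilson--Varah lower bound to get $\|A^{-1}\|_\infty \le (\epsilon\alpha)^{-1}$. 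You instead factor as $D_a(\mathcal{I}_N + \epsilon^{-1}D_a^{-1}\hat{J}_{\epsilon,\te})$ with $D_a = \mathrm{diag}(a(x_i)-\epsilon^{-1})$ and sum a Neumann series, using row-stochasticity to get $\|\epsilon^{-1}D_a^{-1}\hat{J}_{\epsilon,\te}\|_\infty \le (1+\epsilon\alpha)^{-1}<1$ and hence the same $\alpha^{-1}$ after multiplying by $\|D_a^{-1}\|_\infty \le \epsilon/(1+\epsilon\alpha)$. Both arguments exploit exactly the same structural facts (row-stochasticity of $\hat{J}_{\epsilon,\te}$ and uniform negativity of $a$); the diagonal-dominance route is a one-line citation while the Neumann-series route is more self-contained and makes the $\epsilon$-cancellation explicit. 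The remainder of your argument --- the residual identity $(a\mathcal{I}_N+\hat{L}_{\epsilon,\te})(\vec{u}-\hat{u})=(\hat{L}_{\epsilon,\te}-\ko)\vec{u}$, the appeal to Lemma~\ref{lemma5}, and using compactness of $\mathcal{M}$ with $q>0$, $q\in C^3$, $u\in C^3$ to collapse $q^{\pm1/2}$ and $\|\nabla_{\tilde g}u\|$ into constants --- is exactly what the paper does. Two small caveats: your parenthetical ``exactly as in the proof of Theorem~\ref{thm2}'' is a slight miscitation, since the published proof of Theorem~\ref{thm2} bounds the continuous operator via compactness of $J_\epsilon$ and Cauchy--Schwarz rather than a Neumann series; and your union-bound remark about absorbing the $\log N$ factor is a sound refinement that the paper itself glosses over, but it does strictly speaking alter the displayed rate by a $\sqrt{\log N}$ factor unless the $\mathcal{O}$ is read generously.
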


\begin{proof}
We establish the stability of $(\hat{L}_{\epsilon,\te} +a\mathcal{I})$ and then use the consistency proved in the previous lemma to derive the uniform convergence rate of $\hat{u}$ to $u$.

In the proof of Theorem~\ref{thm2}, we showed that $a\mathcal{I}$ is a strictly negative definite operator. It is clear that the discretization of this operator on $x_i$ is a diagonal matrix with components, $a_i := a(x_i)<0$. Note that 
\BEA
\hat{L}_{\epsilon,\te} +a\mathcal{I} = \epsilon^{-1} (D^{-1}\hat{K}-(1-\epsilon a)\mathcal{I}), \nonumber
\EEA
where all components of $J:=D^{-1}\hat{K}$ are non-negative and $\sum_{j=1}^N J_{ij} = 1$, for all $i=1,\ldots,N$ (see Section~\ref{section33}). Define $A: = J-(1-\epsilon a)\mathcal{I}$ so that $\hat{L}_{\epsilon,\te} +a\mathcal{I}=\epsilon^{-1} A$. Since $0< J_{ii} \leq 1$ and $a_i<0$, it is clear that for any $\epsilon>0$,
\BEA
|A_{ii}| = |J_{ii}-1+\epsilon a_i| = -(J_{ii}-1+\epsilon a_i) > 1-J_{ii} = \sum_{j\neq i} J_{ij} = \sum_{j\neq i} |A_{ij}|.\nonumber
\EEA
for all $i$.  Thus $A$ is strictly diagonally dominant and consequently nonsingular. Using the Ahlberg-Nilson-Varah bound \cite{ahlberg1963,varah1975}, we obtain
\BEA
\|A^{-1}\|_{\infty} \leq \frac{1}{min_i (|A_{ii}|-\sum_{j\neq i} |A_{ij}|)} = \frac{1}{\epsilon\min_i (-a_i)} = \frac{1}{\epsilon \alpha},\nonumber
\EEA
where $\alpha := \min_i (-a_i)>0$.  Thus $\| (\hat{L}_{\epsilon,\te}+a\mathcal{I})^{-1}\|_{\infty} = \epsilon \|A^{-1}\|_{\infty} \leq \alpha^{-1}$. That is, the
matrix $\hat{L}_{\epsilon,\te}+a\mathcal{I}$ is nonsingular and its inverse is bounded uniformly, independent of $\epsilon$ and $N$. This establishes the stability of $\hat{L}_{\epsilon,\te} +a\mathcal{I}$.

Since
\BEA
(\hat{L}_{\epsilon,\te} +a\mathcal{I}) (u(x_i)-\hat{u}(x_i))&=&(\hat{L}_{\epsilon,\te}+a\mathcal{I}) u(x_i)- f (x_i)\nonumber\\ &=& (\hat{L}_{\epsilon,\te}+a\mathcal{I}) u(x_i)-(\ko+a\mathcal{I}) u(x_i),\nonumber\\ &=&
(\hat{L}_{\epsilon,\te}-\ko) u(x_i) \nonumber
\EEA
we can deduce that, as $\epsilon,\te\to 0$,
\BEA
|u(x_i) -\hat{u}(x_i)| &\leq &\| (\hat{L}_{\epsilon,\te} +a\mathcal{I})^{-1} \|_\infty \| \hat{L}_{\epsilon,\te} u-\mathcal{L} u \|_\infty \nonumber \\
&\leq& C \left(\epsilon,\te,\frac{ 1}{\sqrt{N}\te^{2+d/4}},\frac{1 }{\sqrt{N}\epsilon^{1/2+d/4}}\right),\nonumber
\EEA
where $C := K\alpha^{-1} \max\{q_{max}^{1/2},1,\max_i\|\nabla_{\tilde{g}} u(x_i) \| q_{min}^{-1/2}\} $ for some constant $K>0$ that is independent of $\epsilon$ and $N$. Since $\mathcal{M}$ is compact, it is clear that $\|\nabla_{\tilde{g}} u\|<\infty$ and $0<q_{min}\leq q\leq q_{max} < \infty$ and thus $C>0$ is finite and the proof is complete.
\end{proof}

This bound, however, is not sharp. For example, consider uniformly distributed grid points so that $q=1$ and the third error term can be neglected. Balancing the first and last error terms, we obtain $\epsilon = \hat{C} N^{-1/(3+d/2)}$, for some constant $\hat{C}$ that depends on the geometry of the manifold, as pointed out in \cite{singer2006graph}. In one of the examples below (see Section~4,1), we numerically found that the convergence rate is much faster with rate $N^{-2}$ for a $d=1$ dimension problem.

\section{Numerical examples}\label{sec4}

In this section, we demonstrate the numerical performance of the local kernel method on various test examples. We begin with a simple example involving a linear differential equation on a flat domain $[0,1]$. Subsequently we show numerical results involving variable coefficient differential equations on non-isometrically embedded smooth manifolds, such as full and half ellipses in $\mathbb{R}^2$ and full and half three-dimensional tori. Finally, we will show an example with unknown embedding where the functions and data are given in the ambient coordinates. In this section, we will ignore the subscript $\{\epsilon,\te\}$ for the discrete estimate $\hat{L}$ for notational simplicity.

\subsection{Linear differential equations on $[0,1]$}

In this first example, we consider solving a linear Boundary Value Problem (BVP),
\BEA
\begin{aligned}\label{bvp}
(\mathcal{L}-2\mathcal{I})u(x) &:=\frac{1}{2}cu''(x) + b u'(x) +au(x) = f(x), \quad x\in (0,1)\\
u'(0) &= u'(1) = 0,
\end{aligned}
\EEA
where $a =2, b=2, c=1$. In this simple example, one can verify that for 
\BEA
f(x) = -4\pi \sin(2\pi x)- (2\pi^2+2) \cos(2\pi x),
\EEA
 the analytical solution for the BVP in \eqref{bvp} is $u(x)= \cos(2\pi x)$. For this problem, one can verify that 
$\mathcal{L}-2\mathcal{I}$ is invertible so the existence of the weak solution is guaranteed using the standard Lax-Milgram argument.

In our numerical experiment, we apply the prototypical kernel in \eqref{prototypicalkernel} with $B = b =2, C = c = 1$ and $\epsilon=2\times 10^{-6}$. For this example, since the protopytical kernel is simply a Gaussian kernel with uniform covariance, we will also use this kernel as $h_{\te}$ for the right-normalization. Thus, $\te = \epsilon = 2\times 10^{-6}$. Under these specifications, we construct an $N\times N$ matrix $\hat L_\epsilon$ on $N=1000$ equally spaced discrete points $\{x_i=i/N \}_{i=0,\ldots,N}$ on $[0,1]$. For efficient computation, a sparse matrix representation of the prototypical kernel is generated by only evaluating it on $k=100$ nearest neighbors (based on the usual Euclidean vector distance) of each $x_i$.

In the remainder of this section, we will use $\vec{u}$ and $\vec{f}$ to denote $N$- dimensional vectors whose $i$th components are $u(x_i)$ and $f(x_i)$, respectively. In the top panel of Figure~\ref{odeexamplefig1}, we compare  $(\hat L-2\mathcal{I}_N) \vec{u}$ with the analytic $\vec{f}$. The error in the operator estimation is $\|(\hat L-2\mathcal{I})\vec{u} - \vec{f} \|_\infty = 4.3870$.  This large error occurs at the boundaries as expected since the asymptotic expansion in \eqref{gef} only holds away from the boundary. Away from the boundaries, the differences between $(\hat{L}-2\mathcal{I}) \vec{u}$ and $\vec{f}$ are on the order of $10^{-4}-10^{-3}$. In the bottom panel of Figure~\ref{odeexamplefig1}, we compare the discrete estimate $\hat{u} = (\hat L-2\mathcal{I})^{-1}\vec{f}$ and the analytical solution $u(x_i)$. In this case, the error of the approximate solution is $\|\vec{u}-\hat{u}\|_\infty = 0.0019$.

\begin{figure}
\centering
\includegraphics[width = .7\textwidth]{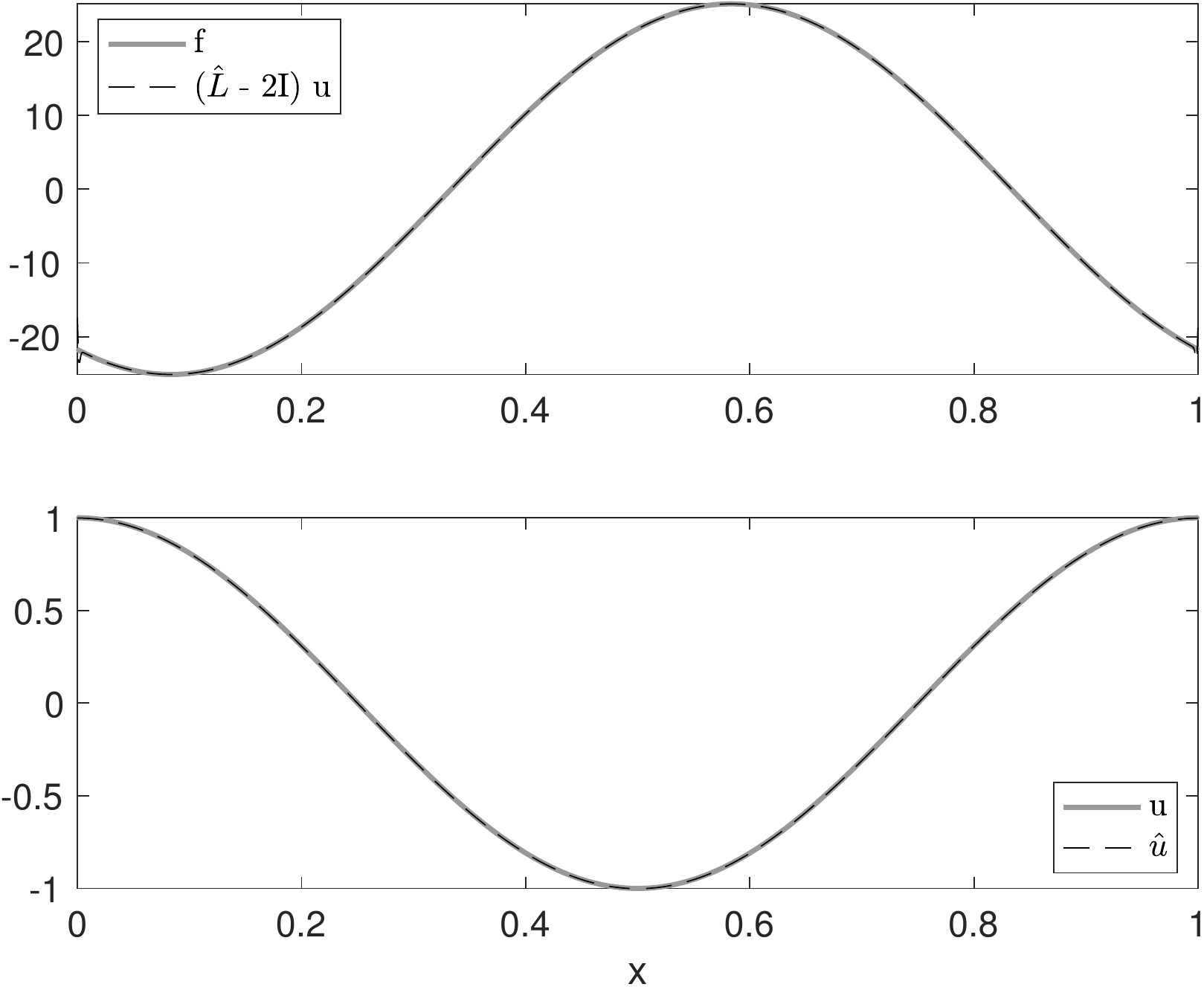}
\caption{Linear boundary value problem in \eqref{bvp}: Pointwise operator estimation (top) and approximate solution by direct inversion (bottom).}
\label{odeexamplefig1}
\end{figure}

In Figure~\ref{ratio}, we show the uniform error as a function of the ratio of the advection, $b$, and diffusion, $c$, terms in \eqref{bvp}, with fixed diffusion coefficient $c=1$. Notice that the error grows as the advection becomes dominant. In particular, the error increases significantly from $10^{-2}$ as the ratio $b/c > 100$. In Figure~\ref{errorrate}, we show the convergence rate in terms of the number of uniformly distributed grid points. Notice that the error rate is close to $N^{-2}$, which is much faster than the estimate in Theorem~\ref{thm6}. In the same figure, we also show the value of  the bandwidth parameter $\epsilon$ that is used in the local kernel, which is of order $N^2$ as well. This parameter is empirically chosen to minimize the error $\|\hat{u}-u\|_{\infty}$ for each $N$, which is possible in this example since the true solution $u$ is known. This numerical result also demonstrates that the error in estimating $u$ is on the same order as the bandwidth parameter value $\epsilon$ (see the right panel of Figure~\ref{errorrate}).

\begin{figure}
\centering
\includegraphics[width = .6\textwidth]{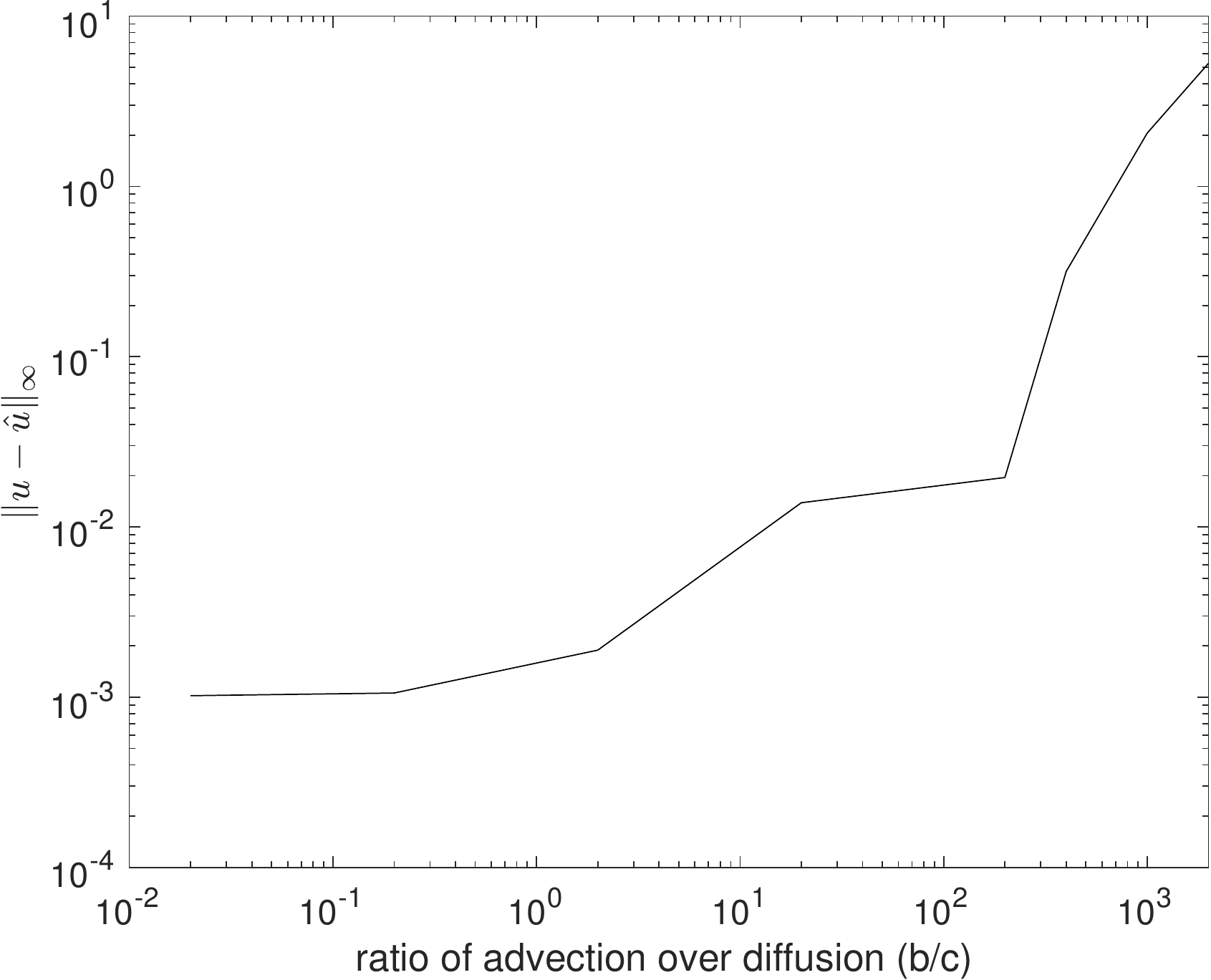}
\caption{Linear boundary value problem in \eqref{bvp}: The uniform errors as functions of the ratio between the advection, $b$, and the diffusion, $c$, coefficients.}
\label{ratio}
\end{figure}

\begin{figure}
\centering
\includegraphics[width = .48\textwidth]{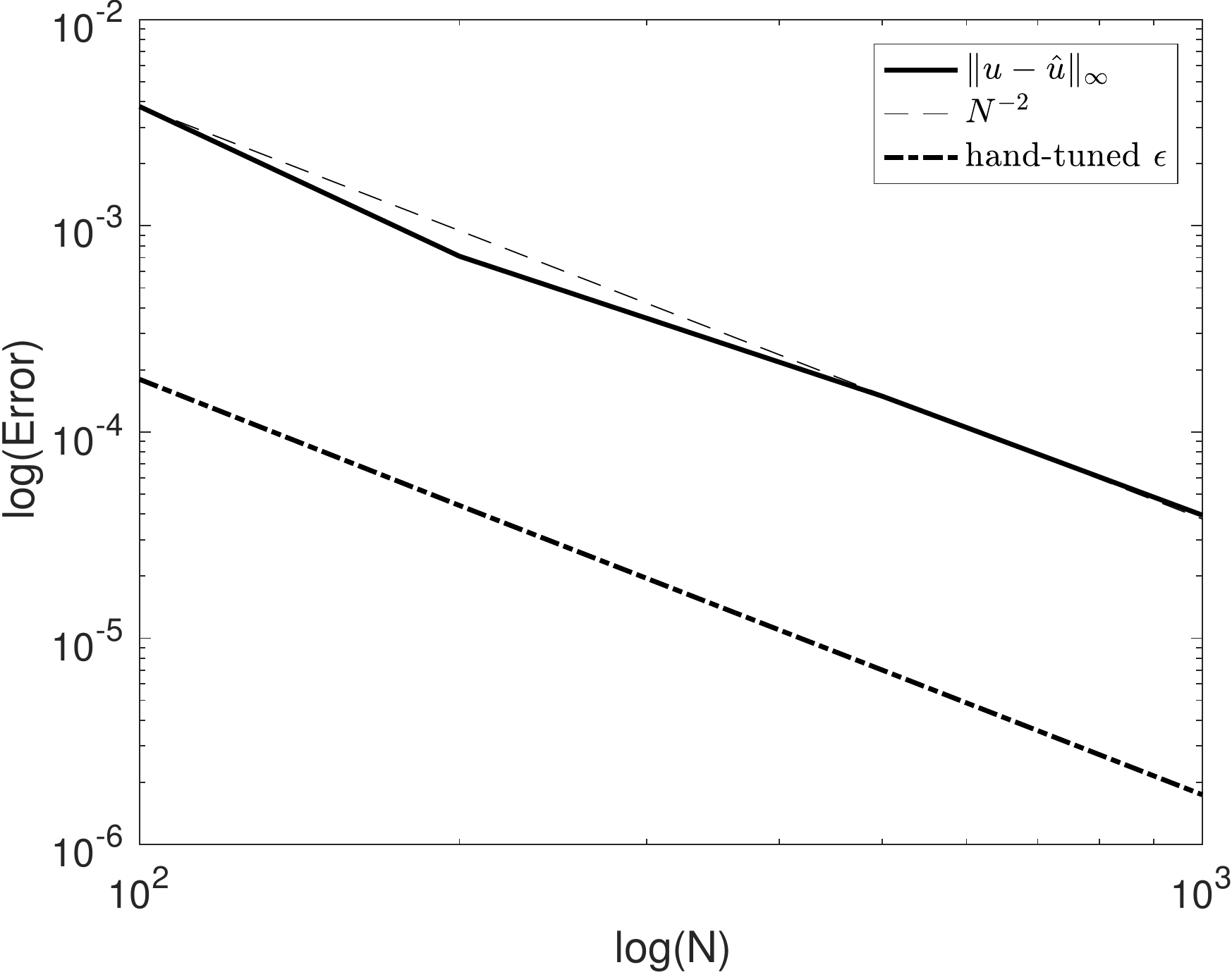}
\includegraphics[width = .48\textwidth]{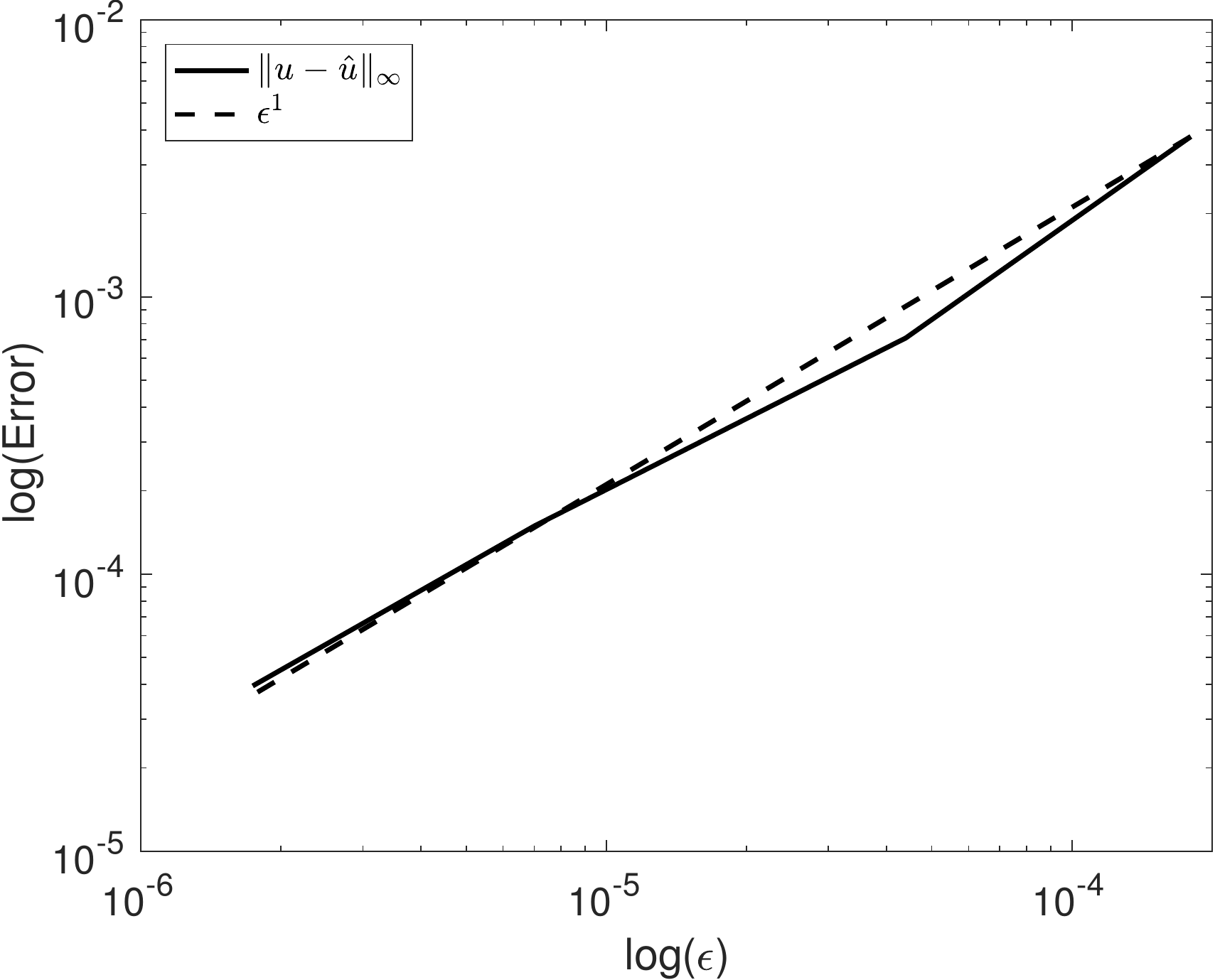}
\caption{Linear boundary value problem in \eqref{bvp}: Uniform error as a function of the total number of grid points, $N$ (left panel). We also show the value of $\epsilon$ that is used in the local kernel. This parameter is empirically tuned to minimize the error for each $N$. Uniform error as a function of $\epsilon$ is shown in the right panel.}
\label{errorrate}
\end{figure}

\subsection{Variable coefficients differential equation on full and partial ellipses}\label{1dexamples}

In the second example, we consider solving the boundary value problem in \eqref{PDE} with $a=0$ on an ellipse $\mathcal{M}\subset \mathbb{R}^2$ where the differential operator $\mathcal{L}$ is defined as in \eqref{generalL} with:
\BEA
\begin{aligned}
b(\theta) &= \cos\theta, \\
c(\theta) &= 1.1 +\cos\theta .
\end{aligned}
\EEA
For this numerical demonstration, the ellipse is defined with the usual embedding function,
\BEA
\iota(\theta) = (\cos\theta,2\sin\theta)^\top, \quad\quad \theta\in[0,2\pi].\label{ellipseembedding}
\EEA
such that the Riemannian metric is given by a scalar component, $g_{11}(\theta) =  \sin^2\theta + 4\cos^2\theta$. For this example, $\mathcal{L}$ is not invertible since it has a zero eigenvalue with constant eigenfunction. For the problem to be well-defined, $f$ has to satisfy the second Fredholm condition. To ensure this solvability condition, we set the true solution to be $u(\theta)=\cos\theta$. With this function $u$, one can check that,
\BEA
f(\theta) &=& b(\theta) \cdot \nabla u(\theta) + \frac{1}{2} c(\theta) \nabla_1 \nabla_1 u(\theta) \nonumber\\
&=& b \frac{\partial u}{\partial \theta} g^{11} + \frac{1}{2}c(\theta)  \Big(\frac{\partial^2u }{\partial\theta^2}  - \Gamma^{1}_{11}  \frac{\partial u}{\partial\theta} \Big) \label{fellipse} \\
&=& -\sin\theta\cos\theta g^{11} + \frac{1}{2} (1.1 +\cos\theta) (-\cos\theta - 3g^{11}\sin^2\theta\cos\theta ),\nonumber
\EEA
where $g^{11} = 1/g_{11}$ is the inverse of the Riemannian metric, $g_{11}$ and $\Gamma^{1}_{11} = \frac{1}{2} g^{11} \frac{\partial g_{11}}{\partial\theta}$ is the Christoffel symbol of the second kind. So, the linear problem that solves for $u$ given $f$ in \eqref{fellipse} is in the range of $\mathcal{L}$ that has non-unique solutions (since $\cos\theta + d$ for any constant $d$ are also solutions).

In the top panel of Figure~\ref{ellipsefig2}, we plot the analytical $f$ in \eqref{fellipse}. In the same figure, we also plot the estimated $\hat L \vec{u}$, where components of $\vec{u}$ are evaluated on equally angle distributed points $\{\theta_i = i\frac{2\pi}{N}\}_{i=0,\ldots,N-1}$. In this numerical experiment, we set $N=1000$ and the number of $k$ nearest neighbor to be $k=200$. Based on the automated bandwidth estimation \cite{bh:15vb}, we found that $\epsilon = 10^{-4}$ is an adequate value for the  prototypical bandwidth parameter. In fact, the same value of $\te=10^{-4}$ will also be used in the Gaussian kernel, $h_{\te}$, that is used to estimate the sampling distribution, which will be used for the right normalization to compensate for the bias induced by nonuniform sampling distribution on the ellipse. Qualitatively, $\hat L\vec{u}$ and $\vec{f}$ are in good agreement. Quantitatively, the error in uniform norm is $\|\hat L\vec{u}-\vec{f} \|_\infty =  0.0082$. In the bottom panel of Figure~\ref{ellipsefig2}, we compare the estimated solution from the pseudo-inverse operation, $\hat L^\dagger \vec{f}$, with the analytical solution $\vec{u}$. Notice the good qualitative agreement; the error in uniform norm is $\|\hat L^\dagger \vec{f}-\vec{u} \|_\infty =  0.0049$.

Now, we consider only a half ellipse domain where the embedding function in \eqref{ellipseembedding} is defined only on $\theta \in [0,\pi]$. In this configuration, the solution that we are looking for, $u(\theta) = \cos\theta$, satisfies the homogenous Neumann boundary condition. In this numerical simulation, we keep the same value of parameters as in the full ellipse case. The pointwise operator estimation (as shown in the top panel of Figure~\ref{halfellipsefig3}) is accurate away from the boundary. The corresponding error in uniform norm, $\|\hat L\vec{u}-\vec{f} \|_\infty =  0.1634$, occurs near the zero boundary. In the bottom panel of Figure~\ref{halfellipsefig3}, the estimated solution based on the pseudo-inverse operation has error $\|\hat L^\dagger \vec{f}-\vec{u} \|_\infty =  0.0028$. 

\begin{figure}
\centering
\includegraphics[width = .7\textwidth]{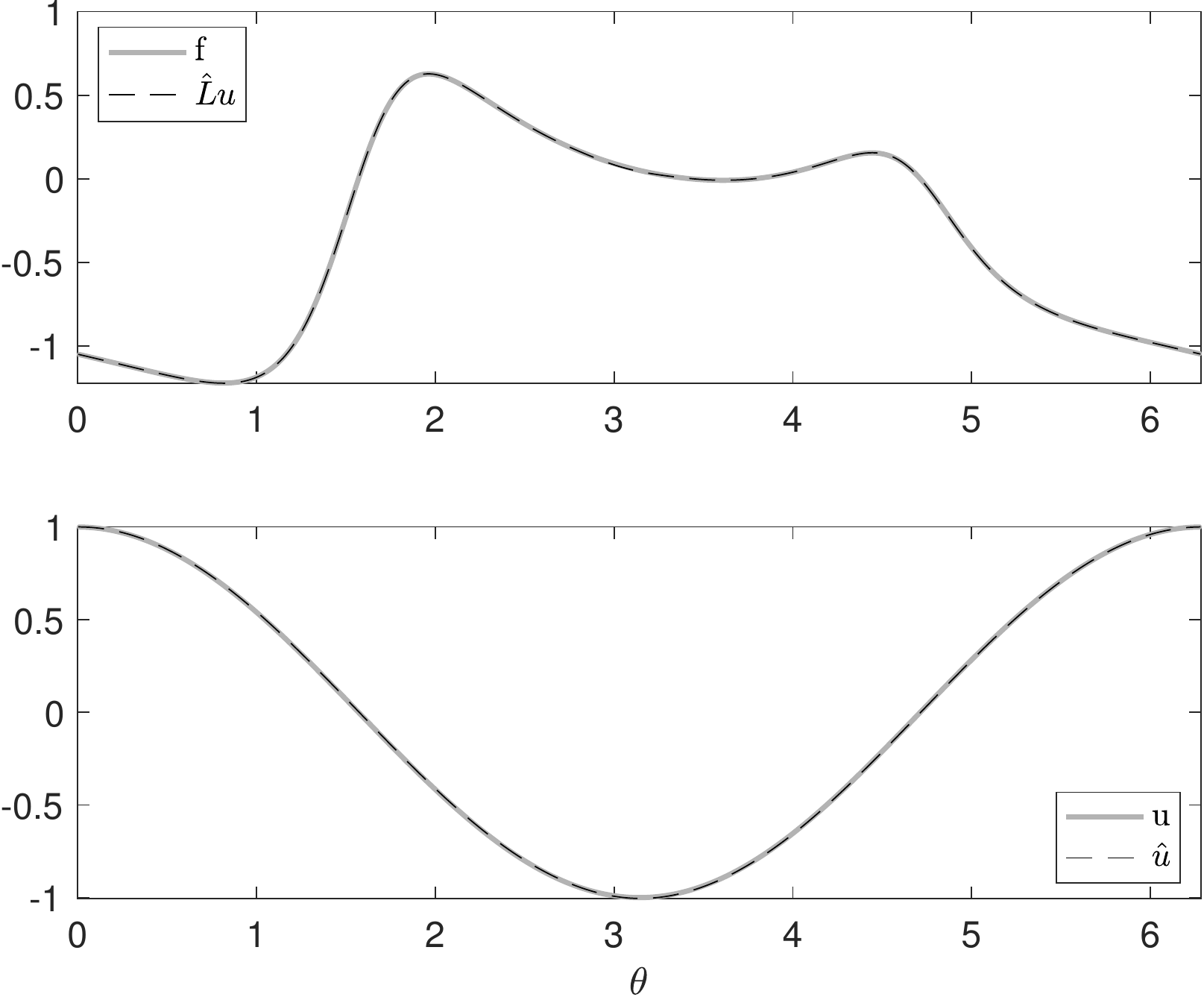}
\caption{Variable coefficients boundary value problem on a full ellipse: Pointwise operator estimation (top) and approximate solution by pseudo-inverse operation (bottom).}
\label{ellipsefig2}
\end{figure}

\begin{figure}
\centering
\includegraphics[width = .7\textwidth]{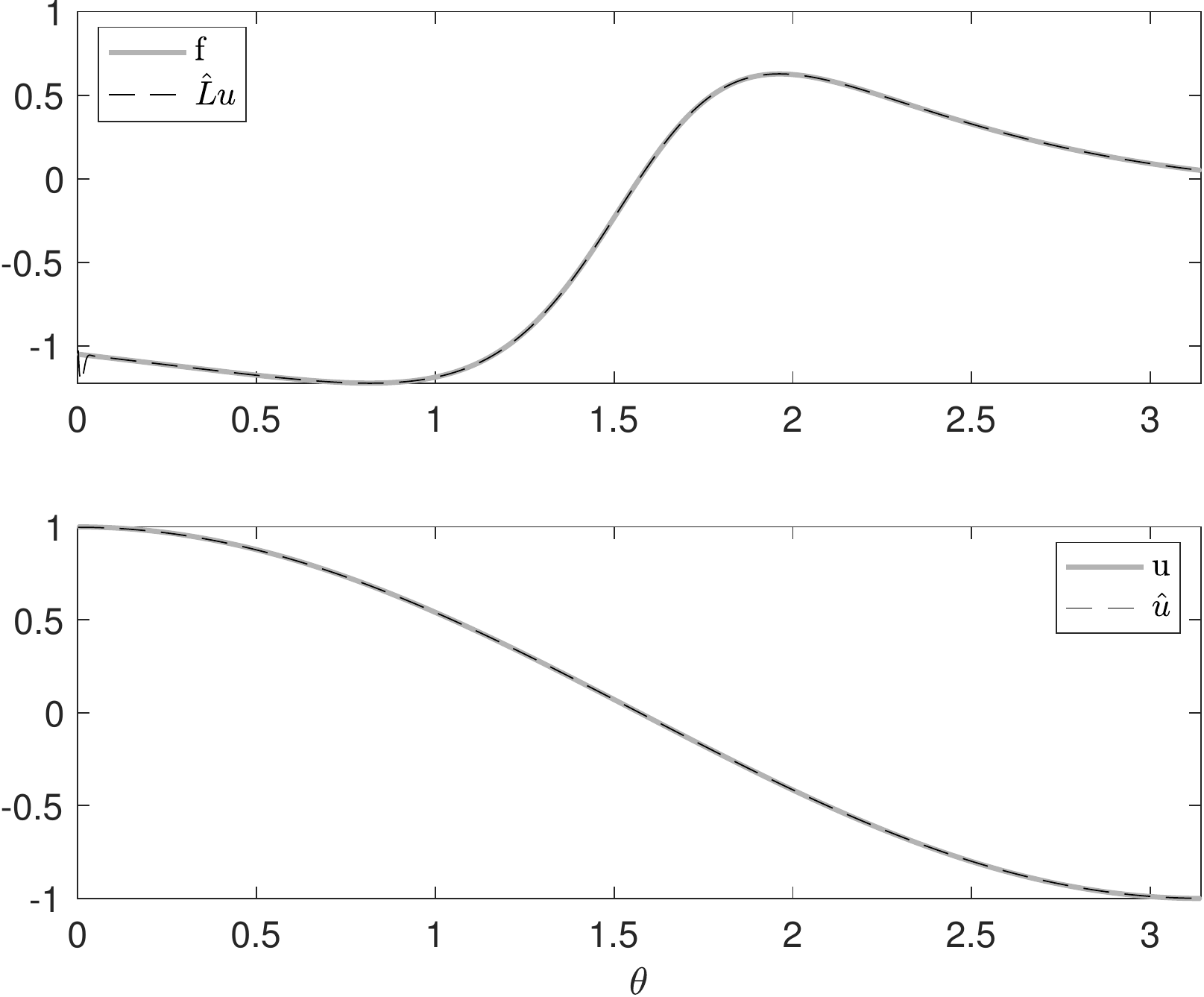}
\caption{Variable coefficients boundary value problem on a half ellipse: Pointwise operator estimation (top) and approximate solution by pseudo-inverse operation (bottom).}
\label{halfellipsefig3}
\end{figure}

\subsection{Variable coefficients differential equation on full and half tori}\label{2dexamples}

We consider solving an intrinsically two-dimensional boundary value problem in \eqref{PDE} with $a=0$ on a three-dimensional torus $\mathcal{M}\subset \mathbb{R}^3$ where the differential operator $\mathcal{L}$ is defined as in \eqref{generalL} with:
\BEA\label{torus_bc}
\begin{aligned}
b(\theta,\phi) &=  \begin{pmatrix} 2+\sin\theta \\ 0 \end{pmatrix}, \\
c(\theta,\phi) &= \begin{pmatrix} 3 +\cos\phi & 1/10 \\ 1/10 & 2 \end{pmatrix}.
\end{aligned}
\EEA
Here, the torus is defined with the standard embedding function:
\BEA\label{torusembedding}
\iota(\theta,\phi) = \begin{pmatrix} 
(2+\cos\theta)\cos\phi \\
(2+\cos\theta)\sin\phi \\
\sin\theta
\end{pmatrix}, \quad\quad \theta,\phi\in [0,2\pi]. 
\EEA

As in the previous example we design an analytic solution to this problem by setting $u(\theta,\phi) = \sin\theta\sin 2\phi$ and calculating $\ko u$. For this problem, it is easy to see that the Riemannian metric is
\BEA 
g_{(\theta,\phi)}(u,v) = u^\top \begin{pmatrix} 1 & 0 \\ 0 & (2+\cos\theta)^2 \end{pmatrix} v, \quad\quad \forall u,v\in T_{(\theta,\phi)}\mathcal{M} \cong\mathbb{R}^2\nonumber,
\EEA
and the only nontrivial Christoffel symbols of the second kind are
\BEA
\Gamma^2_{12} &=& -\frac{\sin\theta}{2+\cos\theta} \\
\Gamma^1_{22} &=& \sin\theta(2+\cos\theta).
\EEA
With this information, the explicit expression for $f$ is given by
\BEA
f &=& \mathcal{L}u = b\cdot \nabla u + \frac{1}{2} c_{ij} \nabla_i\nabla_j \nonumber\\
&=& g^{11}b^1 \frac{\partial u}{\partial \theta} + \frac{1}{2} c_{11} \frac{\partial^2 u}{\partial\theta^2} + c_{12}( \frac{\partial^2 u}{\partial\theta\partial\phi} - \Gamma^2_{12} \frac{\partial u}{\partial\phi})  \nonumber \\  
&& + \frac{1}{2}c_{22}( \frac{\partial^2 u}{\partial\phi^2} - \Gamma^1_{22} \frac{\partial u}{\partial\theta}).\label{torusf}
\EEA

In our numerical implementation, we choose a set of uniformly distributed grid points $\{\theta_i,\phi_j\}$ on $[0,2\pi]\times[0,2\pi]$, with $i,j=1,...,80$ points in each direction resulting in a total of $N=6400$ grid points. the matrix $\hat L$ is constructed with $k=128$ nearest neighbors and $\epsilon=0.0024$ obtained from the automated $\epsilon$-tuning algorithm given in \cite{bh:15vb} (and reviewed in the previous section). To compensate for the bias induced by non-uniform data points on the torus, we apply the right normalization (as discussed in Section~\ref{section33}) with sampling density estimated via a Gaussian kernel with bandwidth parameter $\te = 0.0179$, which is also estimated using the automated $\epsilon$-tuning algorithm of \cite{bh:15vb}. 

Figure~\ref{fulltorusfig4} shows the numerical estimates. Notice the agreement between $\hat L \vec{u}$ in panel (b) and the analytic $\vec{f}$ in panel (a). We also see the qualitative agreement of the estimated $\hat L^\dagger \vec{f}$ in panel (d) with the analytic $\vec{u}$ in panel (c). In Figure~\ref{fulltorusfig5}, we show the differences of panels (a) and (b) as well as (c) and (d), depicted as functions of intrinsic coordinates. Notice that the differences in the pseudo-inverse estimation from the analytical solution (right panel) are smaller than the operator estimation (left panel). In fact, the maximum errors are, $\|L\vec{u}-\vec{f} \|_\infty =  0.0361$, and $\|\hat L^\dagger \vec{f}-\vec{u} \|_\infty =  0.0076$, respectively.

\begin{figure}
\centering
\includegraphics[width = .75\textwidth]{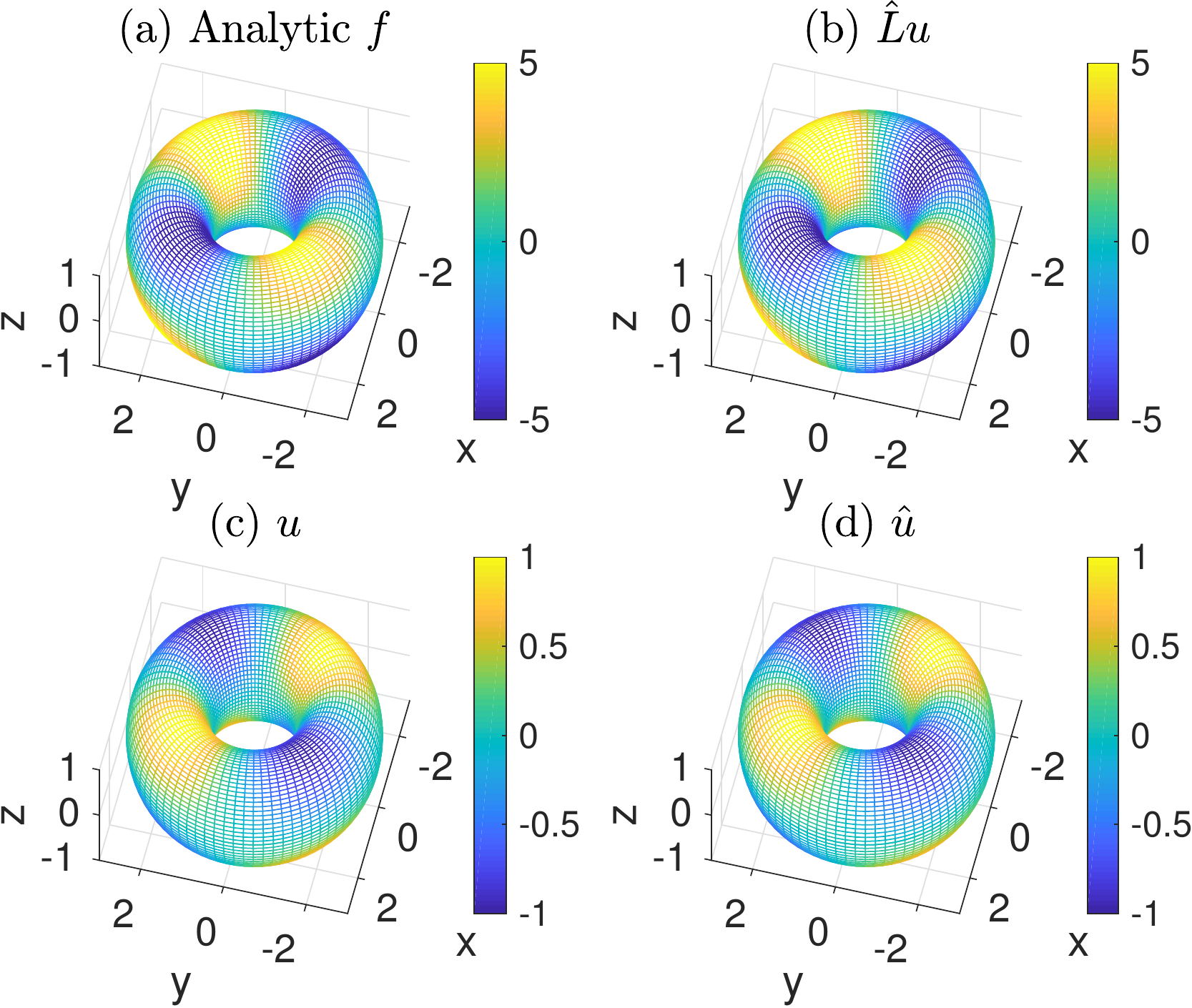}
\caption{Variable coefficients boundary value problem on a three-dimensional torus: (a) Analytic $f$ in \eqref{torusf}; (b) Pointwise operator estimation $\hat L \vec{u}$; (c) True solution $\vec{u}$; (d) Approximate solution via the pseudo-inverse operation, $\hat L^\dagger \vec{f}$.}
\label{fulltorusfig4}
\end{figure}

\begin{figure}
\centering
\includegraphics[width = .47\textwidth]{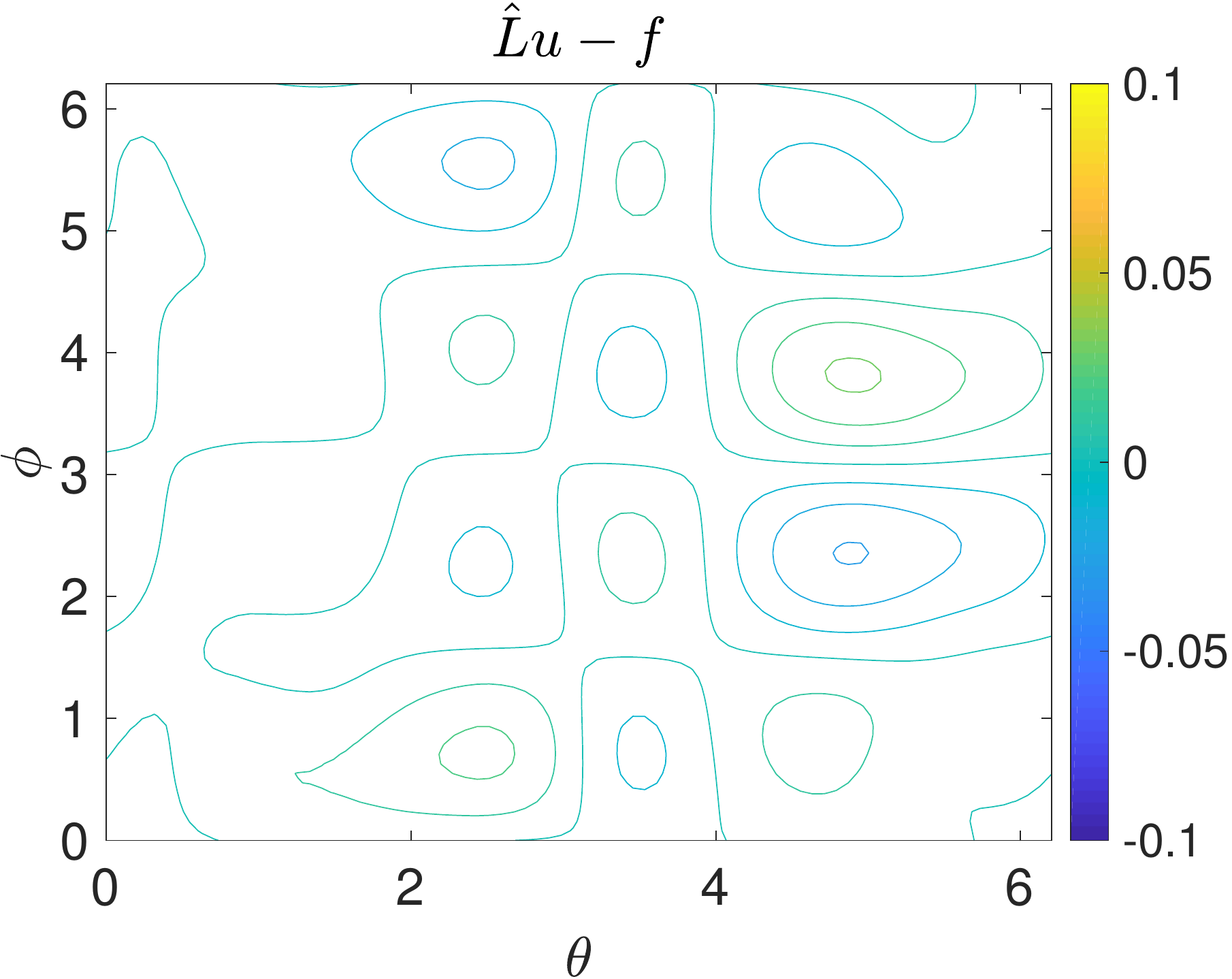}
\includegraphics[width = .47\textwidth]{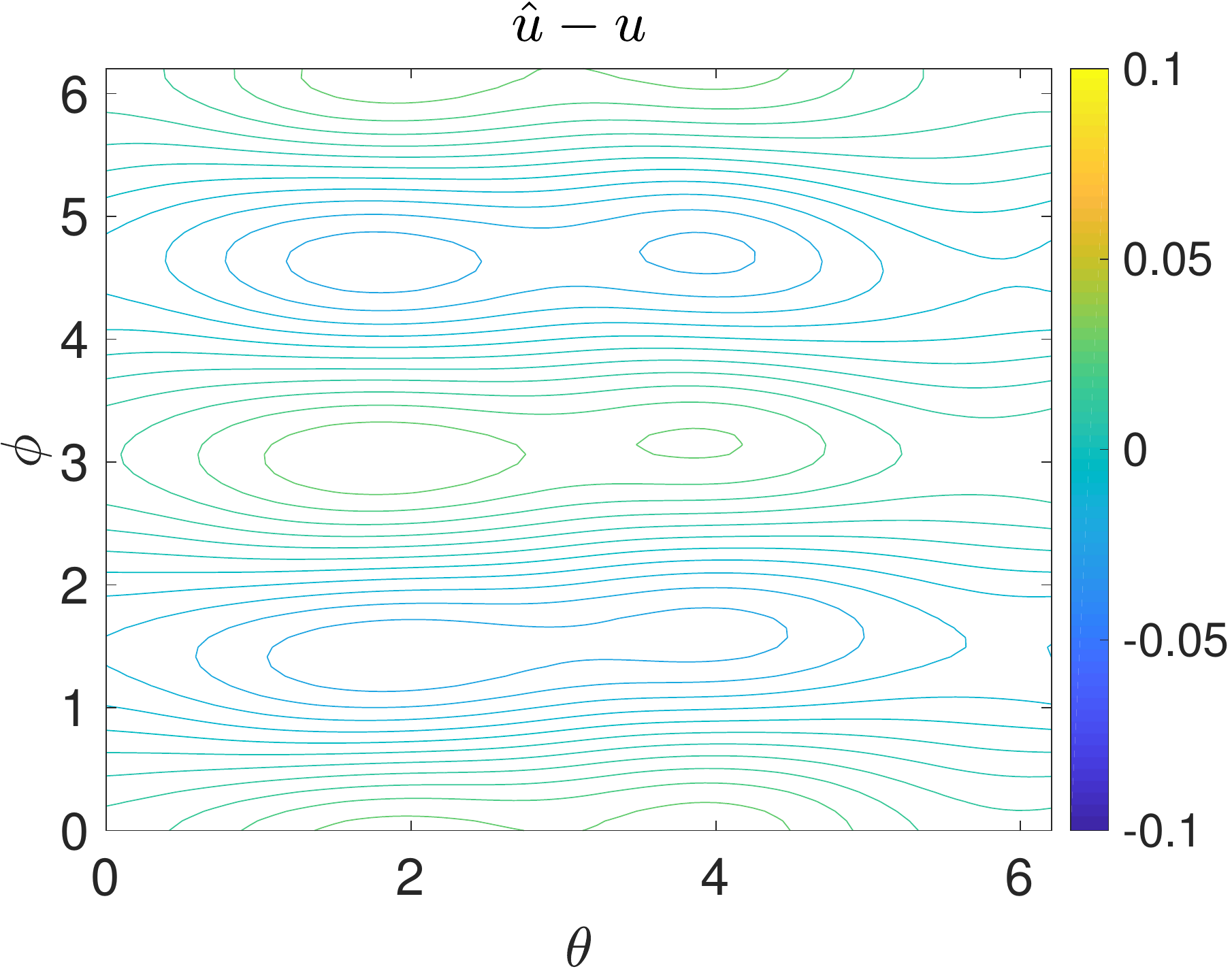}
\caption{Variable coefficients boundary value problem on a three-dimensional torus: Differences in the operator estimation (left) and the pseudo-inverse estimation (right)}
\label{fulltorusfig5}
\end{figure}

We also include a numerical simulation with a half torus. In this case, the manifold is defined via the embedding function in \eqref{torusembedding} with $\phi\in[0,\pi]$. For this experiment, we use the same $b$ and $c$ as in \eqref{torus_bc}. The only difference in the numerics is that the number of grid points corresponding to the $\phi$ coordinate is only 40, resulting in a total of $N=3200$ grid points. Fixing $k=128$ as before, the estimated epsilons are $\epsilon=0.0026$ and $\te=0.0179$. To satisfy the Neumann boundary conditions, we change the analytical solution to $u(\theta,\phi) = \sin\theta\cos 2\phi$. Figures~\ref{halftorusfig6} and \ref{halftorusfig7} show the numerical estimates compared to the corresponding analytical solutions. In this case, notice the larger errors near the boundaries. Overall, the quality of the solutions degrade compared to the full torus example above with errors $\|\hat L\vec{u}-\vec{f} \|_\infty =  0.8008$, and $\|\hat L^\dagger \vec{f}-\vec{u} \|_\infty =  0.0774$, respectively.

\begin{figure}
\centering
\includegraphics[width = .75\textwidth]{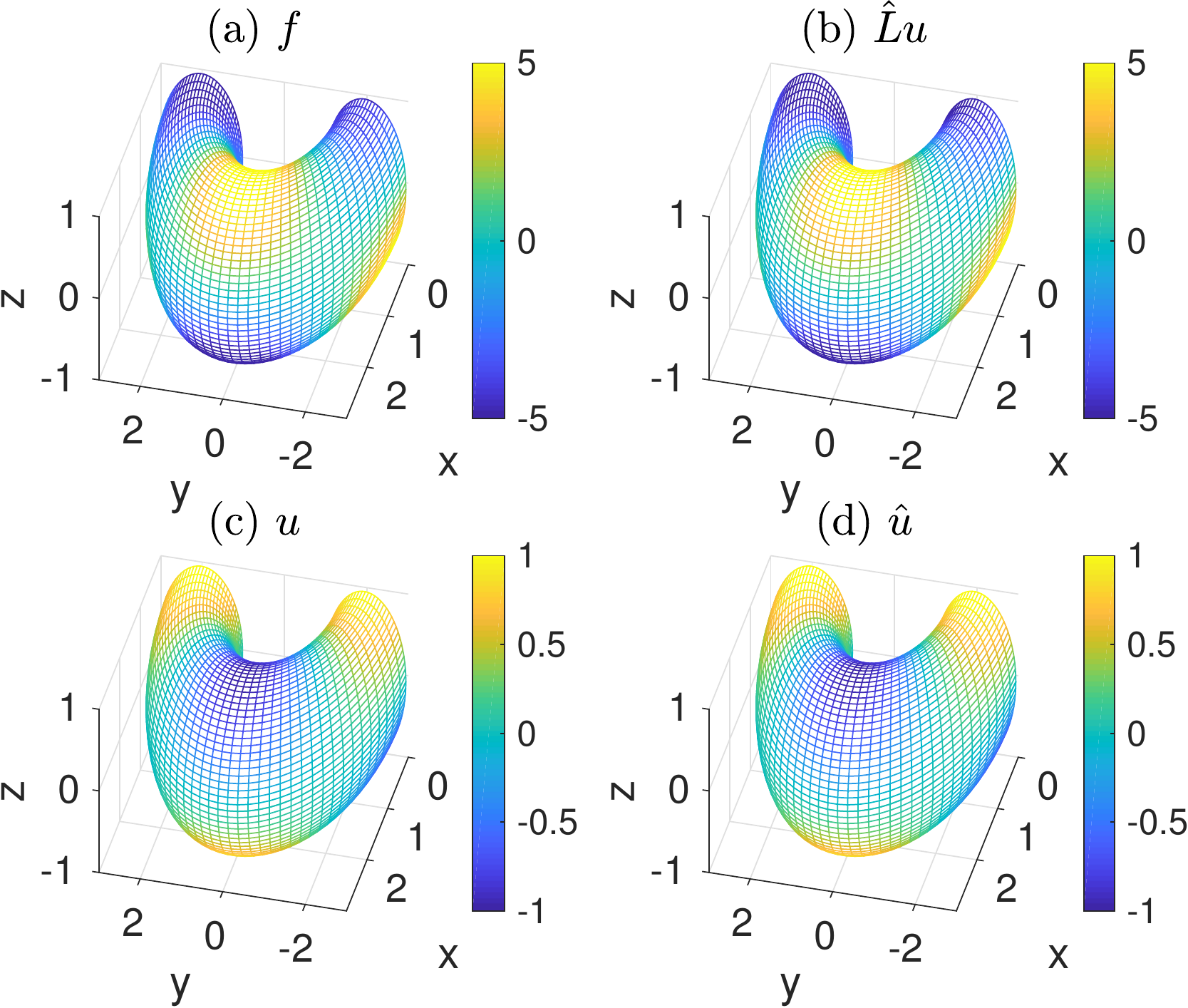}
\caption{Variable coefficients boundary value problem on a half three-dimensional torus: (a) Analytic $f$ in \eqref{torusf}; (b) Pointwise operator estimation $\hat L_\epsilon \vec{u}$; (c) True solution $\vec{u}$; (d) Approximate solution via the pseudo-inverse operation, $\hat L_\epsilon^\dagger \vec{f}$.}
\label{halftorusfig6}
\end{figure}

\begin{figure}
\centering
\includegraphics[width = .47\textwidth]{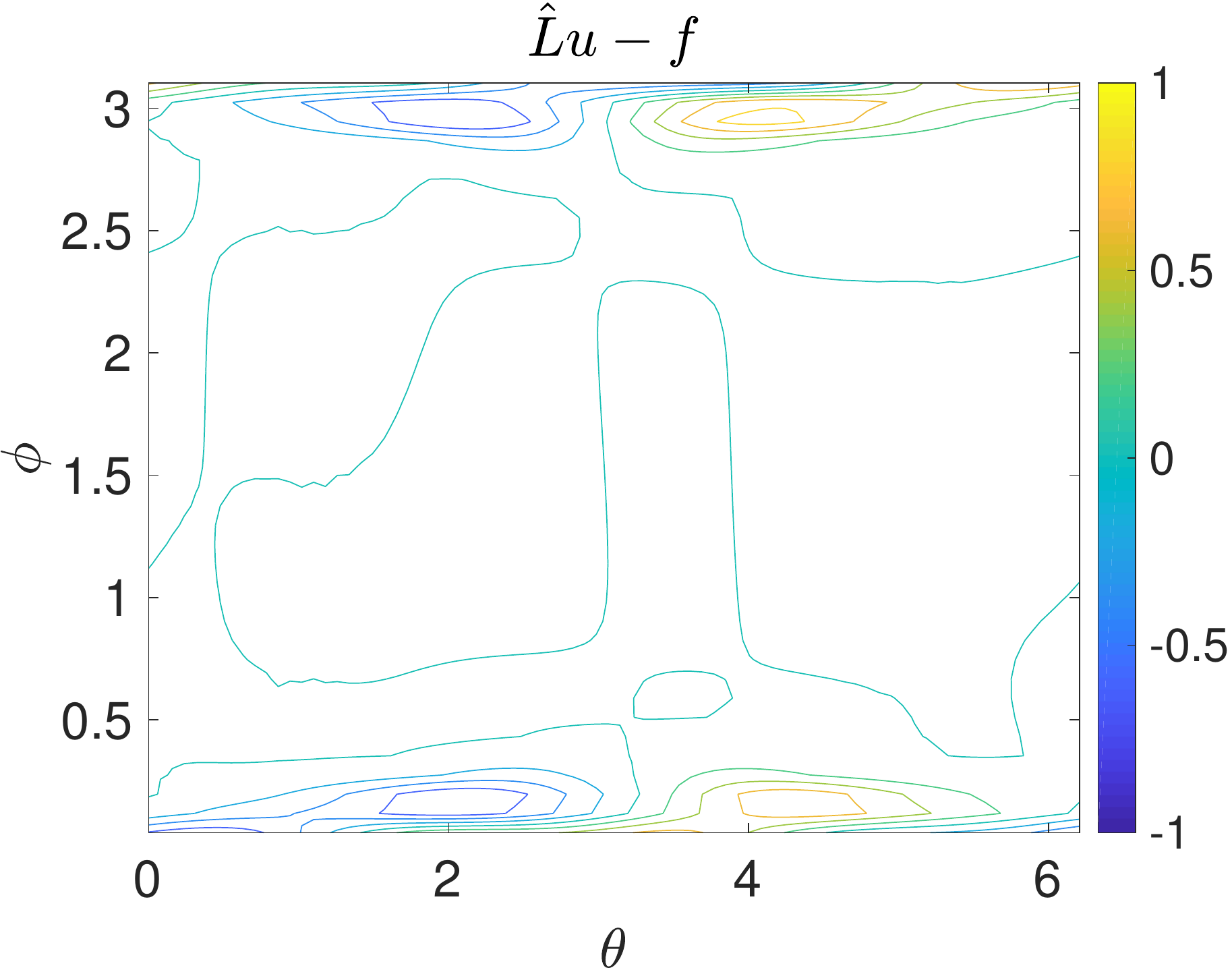}
\includegraphics[width = .47\textwidth]{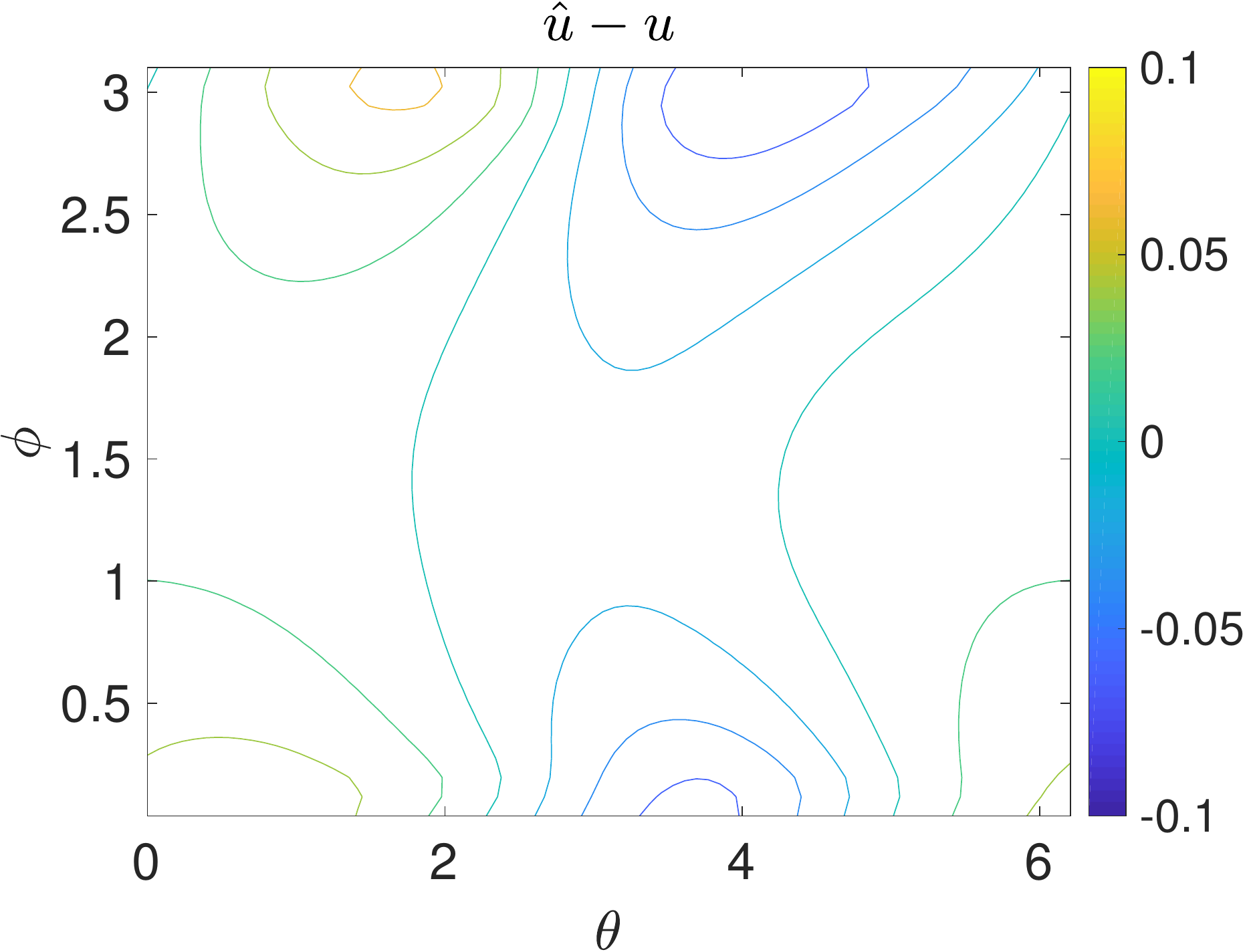}
\caption{Variable coefficients boundary value problem on half three-dimensional torus: Differences in the operator estimation (left) and the pseudo-inverse estimation (right)}
\label{halftorusfig7}
\end{figure}

\subsection{Numerical comparison with the mesh-less radial basis function method}
In this section, we compare the local kernel method with a mesh-less radial basis function (RBF) method \cite{fornberg2015solving} on the four examples in Sections~\ref{1dexamples} and \ref{2dexamples}. For these examples, we simply employ the radial basis functions defined on the intrinsic coordinates since the Riemannian metrics are explicitly known. For the one-dimensional examples, the implementation simply applies the collocation method on equation \eqref{fellipse} (along with the corresponding boundary conditions) with $u$ defined as,
\BEA
u(\theta) = \sum_{k=1}^{N} \lambda_k \Phi (\|\theta_k -\theta\|).
\EEA 
For the two-dimensional examples in Section~\ref{2dexamples}, the RBF functional approximation is defined over $(\theta,\phi)$. In the case of the half torus, the Neumann boundary condition is given as $\frac{\partial f}{\partial\vec{n}}(x) = \nabla f \cdot \vec{n} = \frac{\partial f}{\partial \phi} = 0$ for $x\in \partial \mathcal{M}$. In our numerics, we implement the discrete approximation using Kansa's formulation with Gaussian function $\Phi(r) = \exp(-(sr)^2)$ with shape parameter $s$. We employ the discretization exactly on the same grid points we used in the previous two sections, namely $N=1000$ points in both one-dimensional examples and $N=6400/N=3200$ grid points on the full/half tori examples, respectively. For the numerical results shown in this section, we used the MATLAB RBF toolbox developed by Scott Sarra \cite{sarra:17}. To obtain the weight $\lambda_k$ for these boundary value problems, we used the pseudo-inverse operation as in the local kernel method. For these examples, we should pointed out that the local kernel errors in Figure~\ref{RBFcomparison} are slightly lower than the results presented in Sections~\ref{1dexamples} and \ref{2dexamples} since the kernel bandwidth parameters $\epsilon$ and $\te$ are empirically tuned by comparing the resulting estimates to the analytical solutions.   

\begin{figure}
\centering
\includegraphics[width = .47\textwidth]{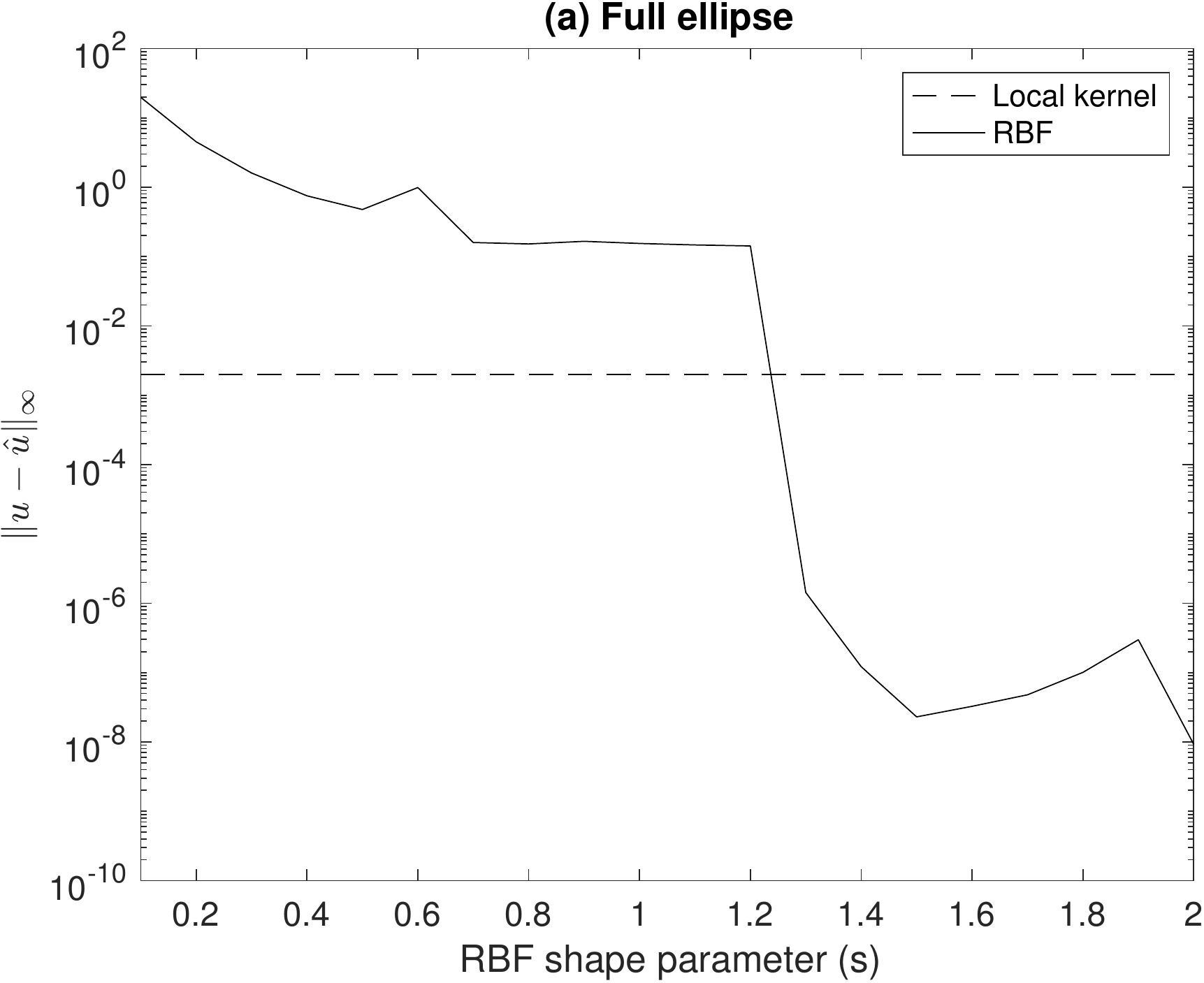}
\includegraphics[width = .47\textwidth]{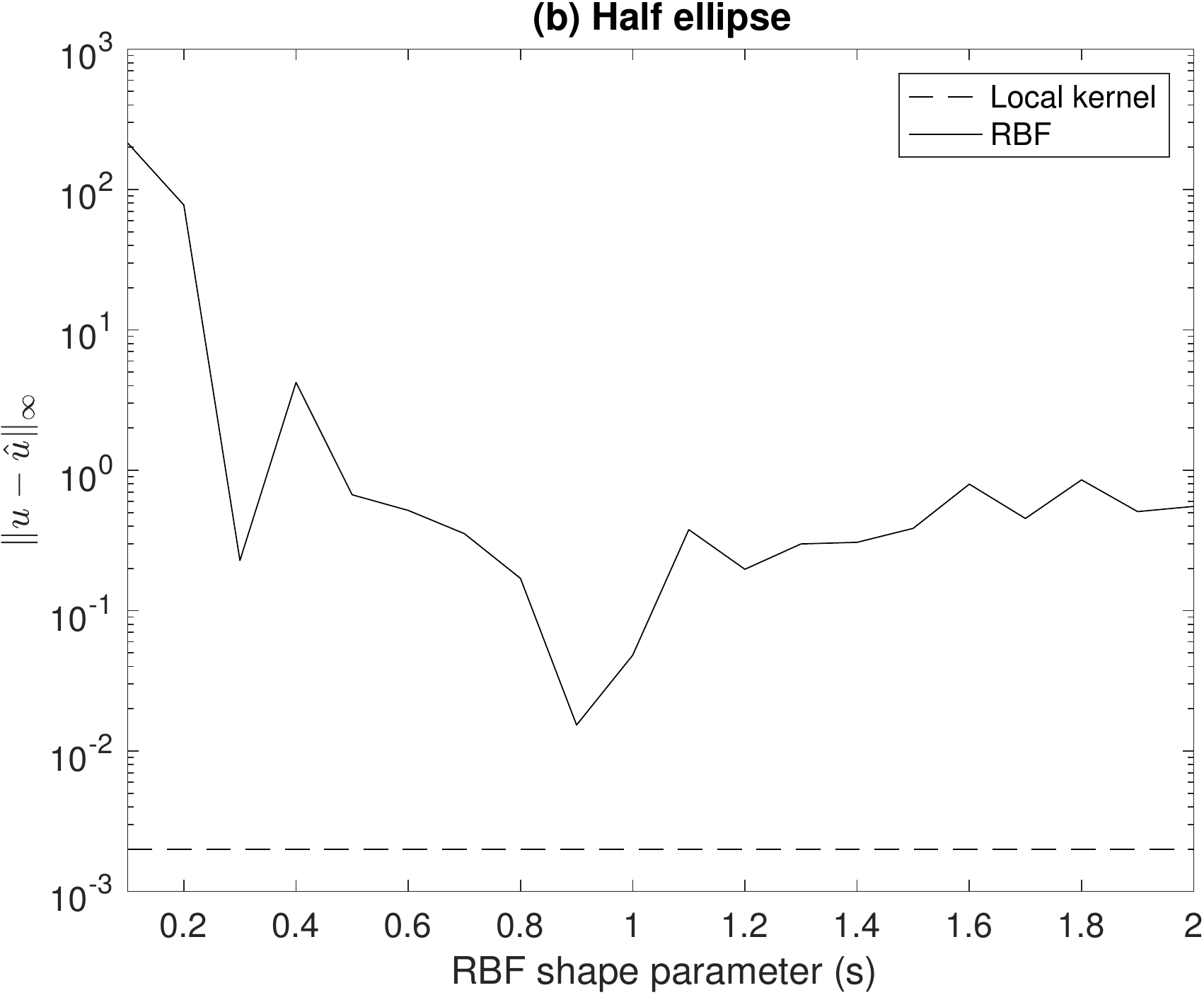}
\includegraphics[width = .47\textwidth]{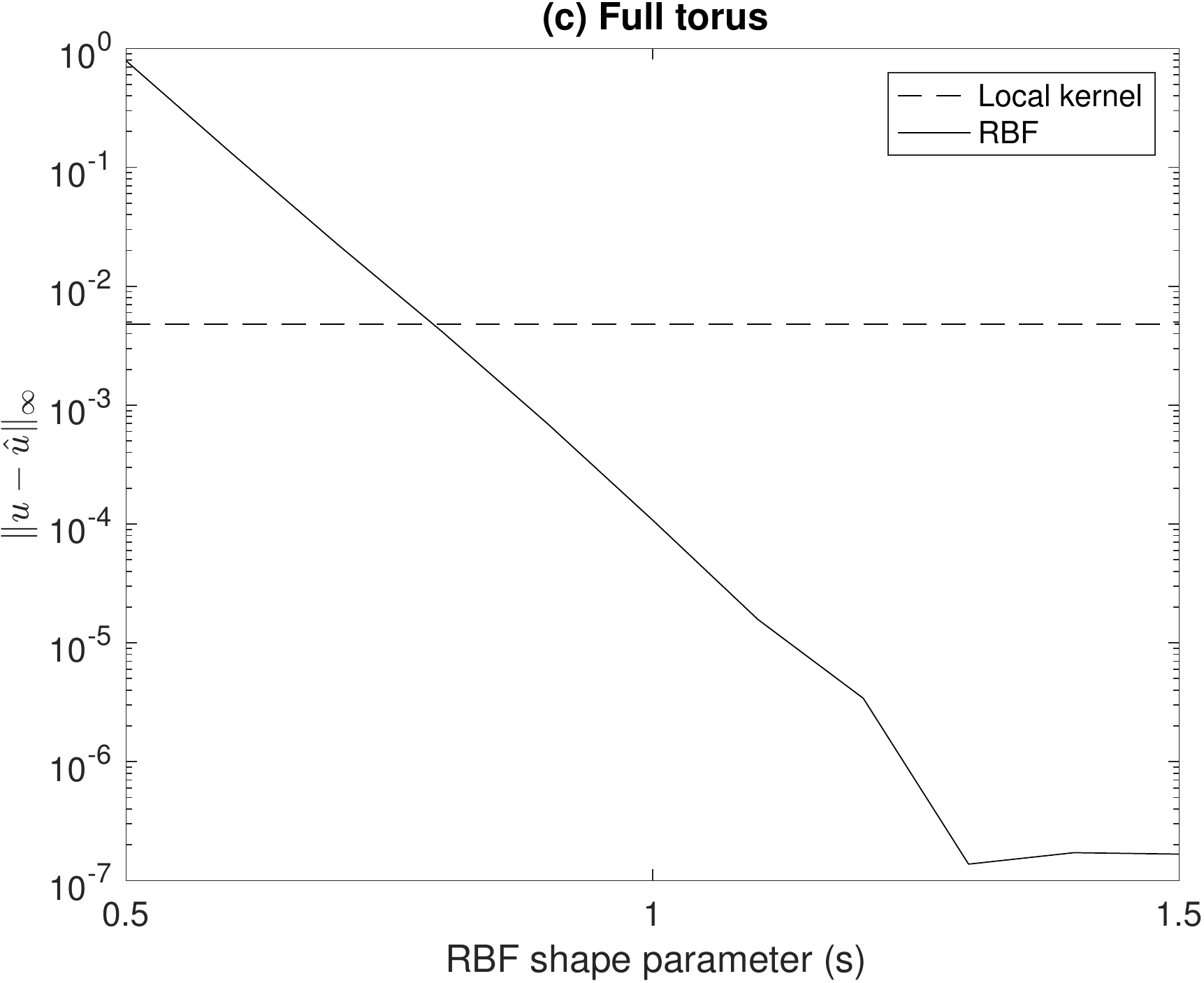}
\includegraphics[width = .47\textwidth]{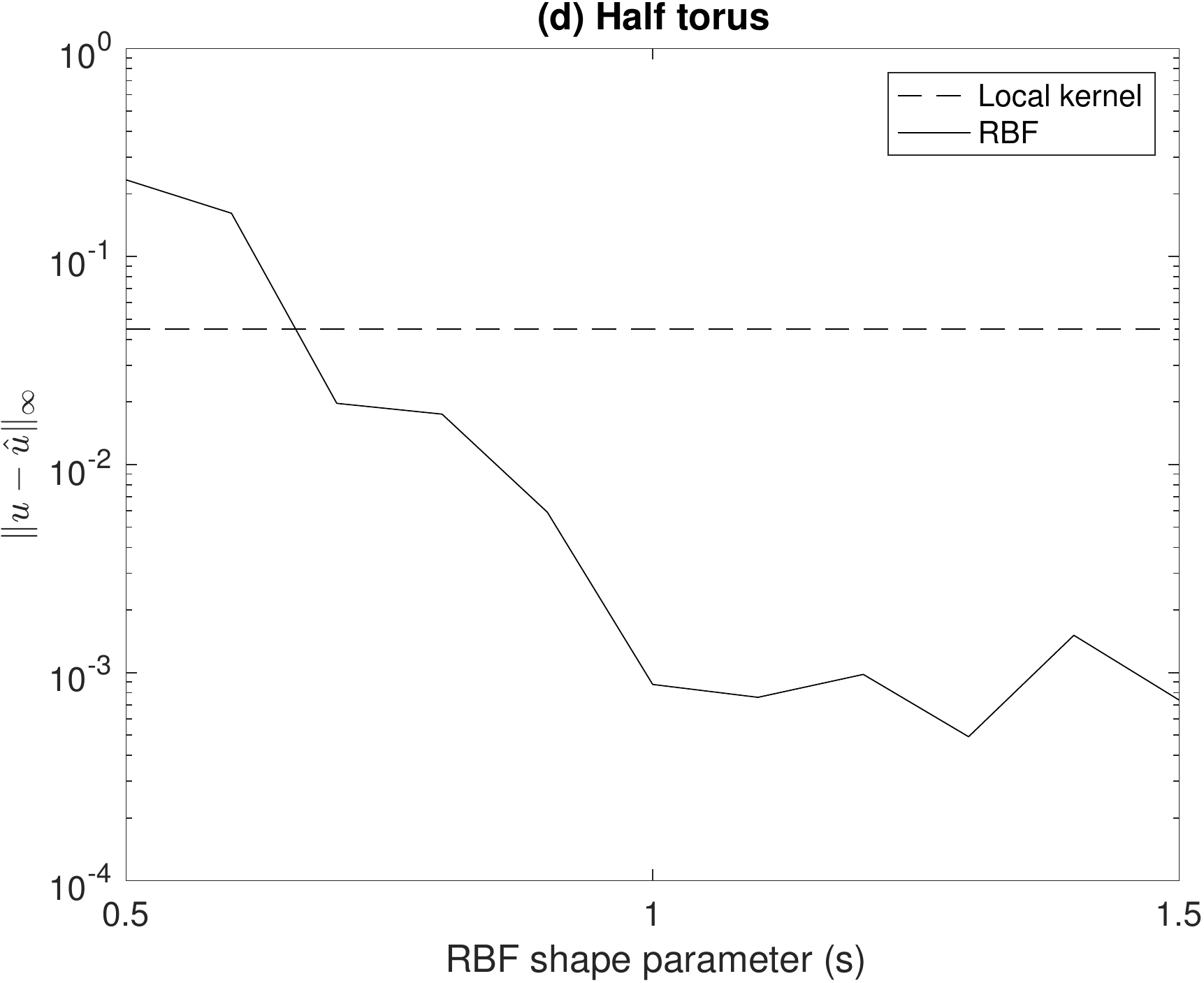}
\caption{Comparison with the RBF collocation method: The RBF errors are plotted as functions of the shape parameter, $s$.}
\label{RBFcomparison}
\end{figure}

From Figure~\ref{RBFcomparison}, it is clear that RBF is superior to the local kernel approach when the shape parameter $s$ is appropriately tuned, except in the half ellipse case. For the half ellipse case, similar results are also found using inverse-quadratic basis function and even using less grid points (results not reported). While the local kernel method is less accurate than RBF in the three cases shown above, extending the RBF method to arbitrary manifolds embedded in $\mathbb{R}^n$ requires significant modifications. For surfaces in $\mathbb{R}^3$, one approach proposed by \cite{piret2012orthogonal} is to first approximate the manifold as an iso-surface of a distance function constructed using the RBF method. Subsequently, the tangential derivatives are approximated by either projecting the ambient three-dimensional derivatives onto the tangent space of the manifold or via the orthogonal gradient method. On the other hand, the local kernel technique extends naturally on arbitrary manifolds as shown in the next section.

\subsection{An example on a manifold with unknown embedding}

In this section, we apply the the local kernel method to solve  
\BEA
\Delta u(x)  &=f(x), \ \ \ \ x = (x_1,x_2,x_3)  \in \mathcal{M} 
 \label{SEQ}
\EEA
where  $f(x)=x_1x_2$ and the surface $\mathcal{M}\subset \R^3$ is a two dimensional-closed manifold that is homeomorphic to the unit sphere, $S^2$, with unknown embedding functions. The surface used in this section is from Keenan Crane's 3D repository \cite{crane}. We neglect the RBF method here since a proper implementation for arbitray manifold (such as, using the technique proposed in \cite{piret2012orthogonal}) requires a significant algorithmic modification compared to the basic RBF formulation for solving PDE's in \cite{fornberg2015solving}, which is beyond the scope of this paper. For comparison, we provide numerical estimates using finite element method (FEM). Numerically, we used FELICITY, an FEM toolbox for Matlab \cite{Walker}.

In Figure~\ref{Spot}, we compare the local kernels solution of \eqref{SEQ} with the FEM solution. The maximum absolute error between the two solutions was $.0064$ and the tuned bandwidth parameter for the local kernel is $\epsilon=.002$ .  To compute the FEM solution, we provided FELICITY with the triangulated mesh of the surface, which consisted of $2930$ points and a connectivity matrix for the triangle elements. In this case, the analytic solution is not known and, since we have no way of obtaining more points on the surface, we restricted the FEM algorithm to use a linear finite element space. 

This example suggests that even when the embedding function of a  manifold is not known, the local kernel technique can be used to approximate the solution to \eqref{PDE} when all the relevant information is specified in ambient coordinates.

\begin{figure}
\centering
\includegraphics[width = .58\textwidth]{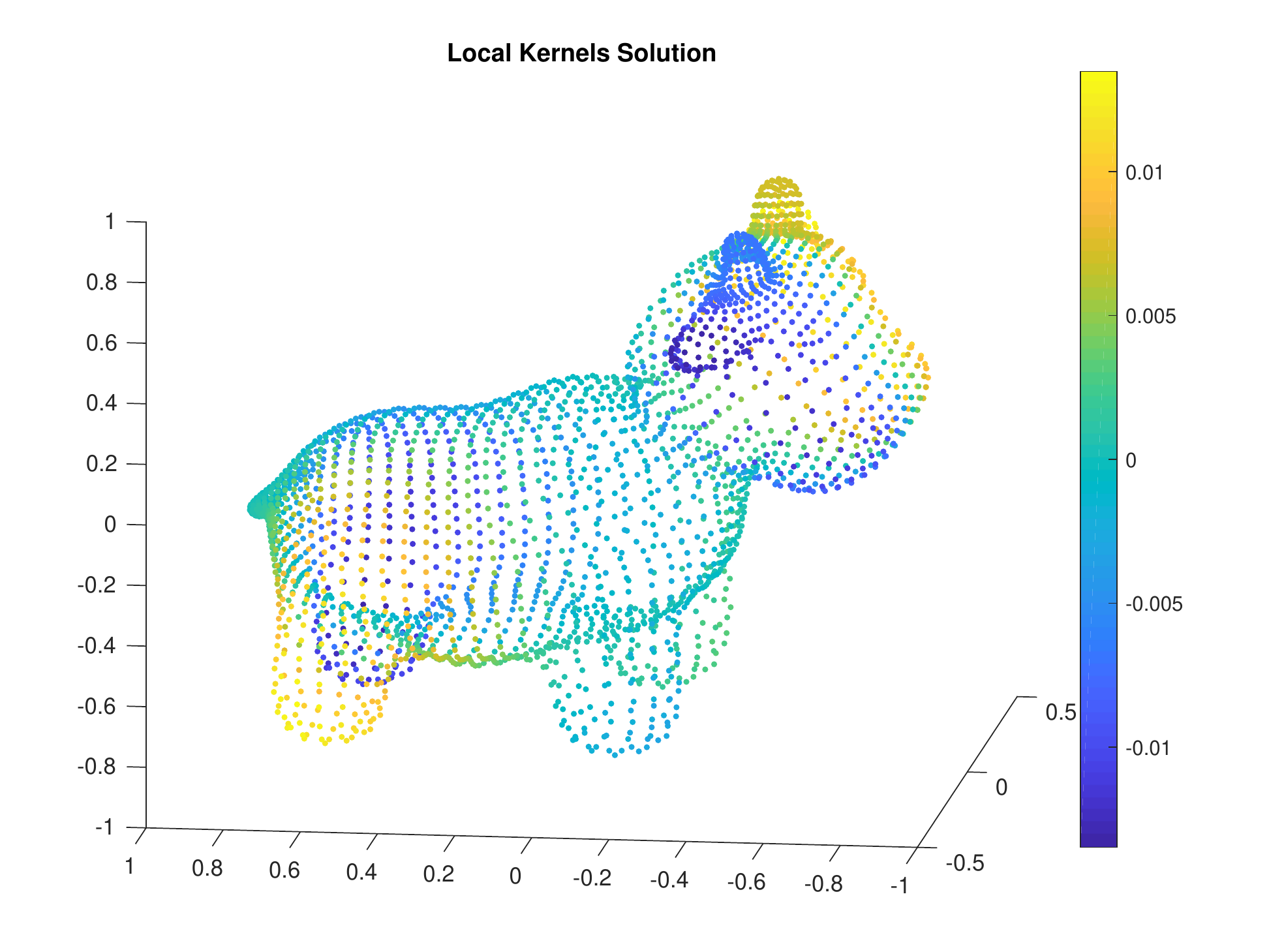}
\includegraphics[width = .58\textwidth]{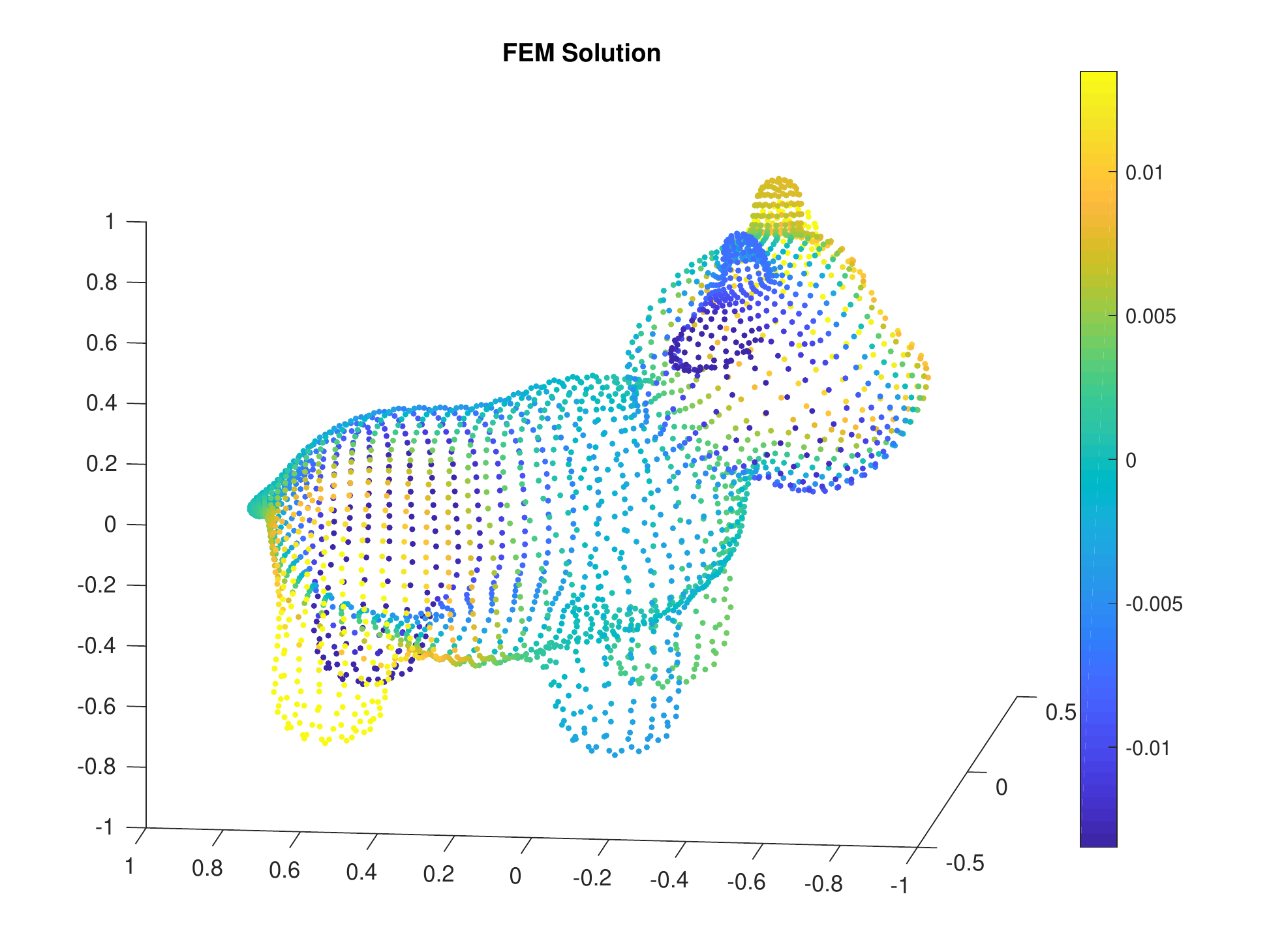}
\includegraphics[width = .58\textwidth]{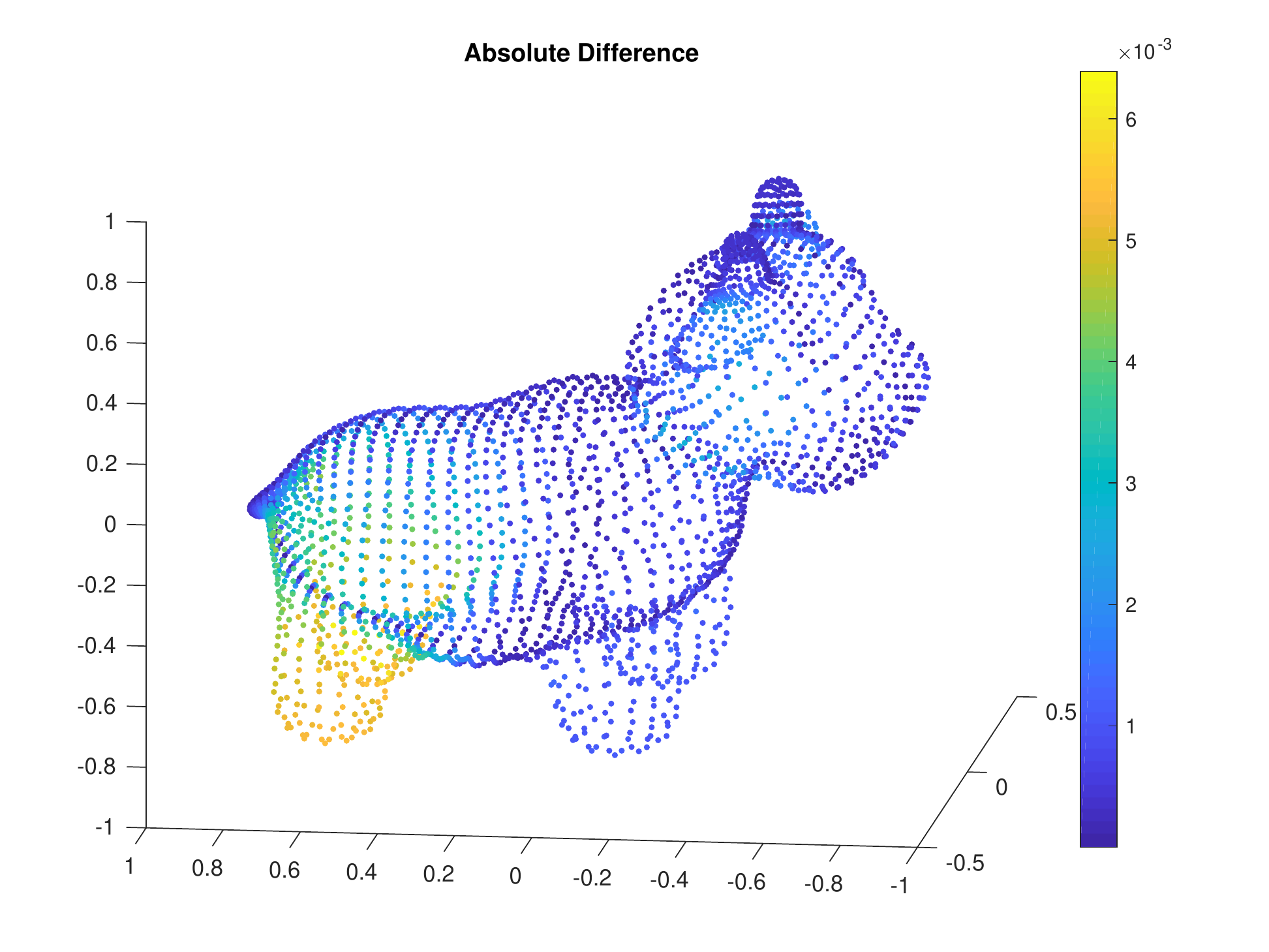}
\caption{Numerical approximation of \eqref{SEQ} using (a) Local Kernel and (b) FEM from FELICITY. Panel (c) shows the absolute difference between the two approximate solutions.} 
\label{Spot}
\end{figure}

\section{Summary and discussion}
In this paper we used the local kernel method, a recently developed generalization of the diffusion maps algorithm \cite{bs:16}, to approximate solutions of linear elliptic PDE's on compact Riemannian manifolds (with Neumann boundary condition if the manifold has boundaries). Theoretically, we show the convergence of the approximate solution under the classical well-posedness conditions of the boundary value problem \eqref{PDE}. Furthermore, when the linear problem has non-unique solutions, we showed that the minimum norm solution solves the PDE in \eqref{PDE} under the Fredholm alternative's second solvability condition.  Numerically, we tested the local kernel technique on various variable coefficient linear PDE's on flat and non-flat manifolds with known embedding functions, including a closed interval, full and half ellipse, and two-dimensional full and half torus. In these instructive examples, where the analytical solution was known, we found that the estimated solution was more accurate than the operator estimation. The theoretically established order $\epsilon$ convergence rate was also numerically verified. From the numerics, we also found that the convergence rate in terms of the number of grid points, $N$, was much sharper than our theoretical estimate. Additionally, we tested our method on a two-dimensional closed manifold homeomorphic to a sphere with unknown embedding function. In this case, we found that our approximate solution was close to the solution obtained from the FEM. 

While  these results are encouraging, this approach has several limitations. Most notably, it is unclear how to extend this technique to approximate nonlinear differential operators. Further investigation is also required to extend this technique to incorporate non-Neumann boundary conditions. The local kernel method is also less accurate at the boundary of a manifold and is consequently more suited for application on closed manifolds. Finally, while the local kernel algorithm is simple to implement, this method is an order-one scheme and it is unclear how to increase the accuracy. 

\section*{Acknowledgments}
The authors would like to thank the reviewer for mentioning the relevant meshless methods: the RBF and point integral method. The authors also thank Alfa Heryudono for pointing us to Scott Sarra's MATLAB radial basis function toolbox \cite{sarra:17}.  The research of J.H. is partially supported by the ONR Grant N00014-16-1-2888 and NSF Grant DMS-1619661. F. G. is supported as a GRA under these grants.

\appendix
\section{Proof of Lemma~\ref{lemma5}} 

The terms $\mathcal{O}(\epsilon,\te)$ in Lemma~\ref{lemma5} are the errors of the continuous operator $L_{\epsilon,\te}$, deduced in Proposition~\ref{prop4}. The term $\mathcal{O}\big(\frac{q(x_i)^{1/2}}{\sqrt{N}\te^{2+d/4}}\big)$ is the sampling error for a discrete approximation,
\BEA
\hat{q}_{\te} (x_i):= \frac{\te^{-d/2}}{N} \sum_{i=1}^N h(\te,x_i,x_j) = H_{\te}q(x_i) + \mathcal{O}(\te^2) = q_{\te}(x_i) +  \mathcal{O}(\te^2).\nonumber
\EEA
Since this error rate is simply a special case of the result in the Appendix~B.1 of \cite{bh:15vb} for a fixed bandwidth Gaussian kernel, we will not repeat it here.
  
For the remaining of this Appendix, we will compute the last error term in Lemma~\ref{lemma5}, which is the bias in estimation of $\mathcal{L}$ using the discete approximation $\hat{L}_{\epsilon,\te}$. To achieve this, we first define the following random variables:
\BEA
\nonumber
F_i(x_j)=\frac{K(\epsilon,x_i,x_j)u(x_j)}{\hat{q}_{\te}(x_j)} \ \  \ \ 
G_i(x_j)=\frac{K(\epsilon,x_i,x_j)}{\hat{q}_{\te}(x_j)} 
\EEA

For $A\in L^1(\mathcal{M},q)$, we define
\BEA
\mathbb{E}(A) : = \int_{\mathcal{M}} A(x) q(x) dx.
\EEA
Then, we can write, 
\begin{align*}
L_{\epsilon,\te}u(x_i):= \frac{1}{\epsilon}\left(\tilde{q}_{\epsilon,\te}^{-1}G_{q,\epsilon}(u(x_i)q^{-1}_{\te}(x_i))-u(x_i)\right)&=\frac{1}{\epsilon}\left(\frac{\mathbb{E}(F_i)}{\mathbb{E}(G_i)}-u(x_i)\right)
\end{align*}
and its discrete approximation,
\BEA
\hat{L}_{\epsilon,\te}u(x_i):= \frac{1}{\epsilon}\left(\frac{\sum_j F_i(x_j)}{\sum_j G_i(x_j)}-u(x_i)\right),\nonumber
\EEA
in terms of $F_i$ and $G_i$. Since the error of neglecting the $j$th component in each of the sum is negligible, $\mathcal{O}(N^{-1}\te^{-d/2})$, we will proceed by neglecting the $i$th term in the sum. Now that the sum is i.i.d., by 
law of large number, we expect,
\BEA
\frac{\sum_{j \neq i}F_i(x_j)}{\sum_{j\neq i}G_i(x_j)} \to \frac{\mathbb{E}(F_i)}{\mathbb{E}(G_i)}. \nonumber
\EEA
as $N\to\infty$.

Following the approach in \cite{singer2006graph,bh:15vb}, we compute,
\BEA
P(|L_{\epsilon,\te}u(x_i)- \hat{L}_{\epsilon,\te}u(x_i) |> a) &\approx & P\left(\frac{1}{\epsilon}\left(\frac{\mathbb{E}(F_i)}{\mathbb{E}(G_i)}-\frac{\sum_{j \neq i}F_i(x_j)}{\sum_{j\neq i}G_i(x_j)}\right)>a\right) \nonumber \\ &=& 
P\left(\sum_{j \neq i}Y_j>a\epsilon(N-1)\mathbb{E}(G_i)^2\right), \label{discreteerrorLee}
\EEA
where 
$Y_j=\mathbb{E}(G_i)F_i(x_j)-\mathbb{E}(F_i)G_i(x_j)+a\epsilon\mathbb{E}(G_i)\left(\mathbb{E}(G_i)-G_i(x_j)\right)$. Note that the approximation in the first line is due to neglecting the $j$th component in each of the sum. Using the Chernoff bound, 
\BEA
P\left(\sum_{j \neq i}Y_j>a\epsilon(N-1)\mathbb{E}(G_i)^2\right)&\leq 2\exp\bigg(\frac{-a^2\epsilon^2(N-1)^2\mathbb{E}(G_i)^4}{4(N-1)\mathrm{Var}(Y_j)}\bigg) \label{chernoff}
\EEA
our next task is to express the upper bound in terms of $\epsilon$ and $N$ by computing $\mathrm{Var}(Y_j)$ and $\mathbb{E}(G_i)^4$. Based on the expansion in \eqref{Gqe}, we deduce
\begin{align*}
\mathbb{E}(F_i)&=\epsilon^{d/2}\gq\left(uq_{\te}^{-1}\right)\\
&=\epsilon^{d/2}m\tmo^{-1} u\left(1-\te \tm\tom -\te\tm q^{-1}\Delta q+\epsilon m^{-1}\omega +\epsilon\frac{\ko(u)}{u}\right)\\ 
&\hspace{.5 in} +\epsilon^{d/2}\mathcal{O}\left(\epsilon\te,\epsilon^2,\te^2\right)\\
\mathbb{E}(F_i)^2&=\epsilon^d m^2\tmo^{-2} u^2\left(1-2\te \tm \tom-2\te \tm q^{-1} \Delta q+2 \epsilon m^{-1} \omega+2\epsilon \frac{\ko(u)}{u}\right)\\
 &\hspace{.5 in}+\epsilon^{d/2}\mathcal{O}\left(\epsilon\te,\epsilon^2,\te^2\right)\\
\mathbb{E}(G_i)&=\epsilon^{d/2} \tilde{q}_{\epsilon,\te}\\
&=\epsilon^{d/2}m\tmo^{-1} \left(1- \te \tm\tom-\te \tm q^{-1}\Delta q+\epsilon m^{-1} \omega\right)+\epsilon^{d/2}\mathcal{O}\left(\epsilon\te,\epsilon^2,\te^2\right)\\
\mathbb{E}(G_i)^2&=\epsilon^{d}m^2\tmo^{-2} \left(1- 2\te \tm\tom-2\te\tm q^{-1}\Delta q+2\epsilon m^{-1} \omega\right)+\epsilon^{d/2}\mathcal{O}\left(\epsilon\te,\epsilon^2,\te^2\right),\\
\mathbb{E}(F_i^2)&=2^{-d/2}\epsilon^{d/2}G_{q,\epsilon/2}\left(u^2q_{\te}^{-2}\right)\\
&=2^{-d/2}\epsilon^{d/2}m\tmo^{-2} u^2q^{-1}\bigg(1-\te\tm\tom -\te\tm q^{-1}\Delta q+(\epsilon/2) m^{-1}\omega \\
&\hspace{.25 in}+\epsilon\frac{\ko(u^2q^{-1})}{2u^2q^{-1}}\bigg)+2^{-d/2}\epsilon^{d/2}\mathcal{O}\left(\epsilon\te,\epsilon^2,\te^2\right)\\
\mathbb{E}(G_i^2)&=2^{-d/2}\epsilon^{d/2}G_{q,\epsilon/2}\left(q_{\te}^{-2}\right)\\
&=2^{-d/2}\epsilon^{d/2}m\tmo^{-2} q^{-1}\bigg(1- \te \tm\tom-\te\tm q^{-1}\Delta q+(\epsilon/2) m^{-1} \omega+\epsilon \frac{\ko(q^{-1})}{2q^{-1}}\bigg)\\
&\hspace{.25 in}+2^{-d/2}\epsilon^{d/2}\mathcal{O}\left(\epsilon\te,\epsilon^2,\te^2\right)\\
\mathbb{E}(G_iF_i)&=2^{-d/2}\epsilon^{d/2}G_{q,\epsilon/2}\left(uq_{\te}^{-2}\right)\\
&=2^{-d/2}\epsilon^{d/2}m\tmo^{-2} uq^{-1}\bigg(1-\te \tm\tom -\te\tm q^{-1}\Delta q+(\epsilon/2) m^{-1}\omega +\epsilon\frac{\ko(uq^{-1})}{2uq^{-1}}\bigg)\\
&\hspace{.25 in}+2^{-d/2}\epsilon^{d/2}\mathcal{O}\left(\epsilon\te,\epsilon^2,\te^2\right).
\end{align*}
Since $\mathbb{E}(Y_j)=0$, 
\begin{align*}
\mathrm{Var}(Y_j)&=\mathbb{E}(Y_j^2)\\
&= \mathbb{E}(G_i)^2\mathbb{E}(F_i^2)+\mathbb{E}(F_i)^2\mathbb{E}(G_i^2)-2\mathbb{E}(G_i)\mathbb{E}(F_i)\mathbb{E}(F_iG_i)\\ &\hspace{.25 in}+a\epsilon^{3d/2}\mathcal{O}(\epsilon^2)).
\end{align*}
Proceed with the calculation, 
\begin{align*}
\mathbb{E}(G_i)^2\mathbb{E}(F_i^2)&=2^{-d/2}\epsilon^{3d/2}m^3\tmo^{-4}u^2q^{-1}\bigg(1-3\te\tm\tom-3\te\tm q^{-1}\Delta q\\ &\hspace{.25 in}+\frac{5}{2}\epsilon m^{-1}\omega+\epsilon\frac{\ko u^2q^{-1}}{2u^2q^{-1}}\bigg)+\epsilon^{3d/2}\mathcal{O}(\epsilon\te,\epsilon^2,\te^2)\\
\mathbb{E}(F_i)^2\mathbb{E}(G_i^2)&=2^{-d/2}\epsilon^{3d/2}m^3\tmo^{-4}u^2q^{-1}\bigg(1-3\te\tm\tom-3\te\tm q^{-1}\Delta q \\ &\hspace{.25 in}+\frac{5}{2}\epsilon m^{-1}\omega+ 2\epsilon u^{-1}\ko u+(\epsilon/2)q\ko q^{-1}\bigg)\\ &\hspace{.25 in}+\epsilon^{3d/2}\mathcal{O}(\epsilon\te,\epsilon^2,\te^2)\\
\mathbb{E}(F_i)\mathbb{E}(G_i)\mathbb{E}(F_iG_i)&=2^{-d/2}\epsilon^{3d/2}m^3\tmo^{-4}u^2q^{-1}\bigg(1-3\te\tm\tom-3\te\tm q^{-1}\Delta q\\&\hspace{.25 in}+\frac{5}{2}\epsilon m^{-1}\omega+u^{-1}\ko u +\epsilon \frac{\ko uq^{-1}}{2uq^{-1}}\bigg)+\epsilon^{3d/2}\mathcal{O}(\epsilon\te,\epsilon^2,\te^2),
\end{align*}
we have,
\BEA
\mathrm{Var}(Y_j)&=&2^{-d/2}\epsilon^{3d/2+1}m^3\tmo^{-4}u^2q^{-1}\bigg(\frac{\ko(u^2 q^{-1})}{2u^2q^{-1}}+\frac{\ko q^{-1}}{2q^{-1}}-\frac{\ko(uq^{-1})}{uq^{-1}}\bigg)\nonumber\\ &&\hspace{.25in}+ \epsilon^{3d/2}\mathcal{O}(\epsilon\te, \epsilon^2,\te^2)\nonumber\\
&=&2^{-d/2-1}\epsilon^{3d/2+1}m^3\tmo^{-4}\bigg(\ko(u^2q^{-1})+u^2\ko q^{-1}-2u\ko(uq^{-1})\bigg)\nonumber\\ &&\hspace{.25in}+ \epsilon^{3d/2}\mathcal{O}(\epsilon\te, \epsilon^2,\te^2).\label{tempvar}
\EEA

Recall that $\ko=b\cdot \nabla+\frac{1}{2}C_{ij}\nabla_i\nabla_j$, where the dot product, gradient and covariant derivatives are all defined with respect to Riemannian metric, $g$, inherited by $\mathcal{M}$ from the ambient space. From lemma $4.2$ in \cite{bs:16}, the hessian term can be written as $$C_{ij}\nabla_i\nabla_j =\Delta_{\tilde{g}}+\kappa \cdot \nabla,$$
where $\kappa$ depends on $C$. Here the Laplacian is defined with respect to a modified metric $\tilde{g}(u,v)=g(c^{-1/2}u,c^{-1/2}v)$ for all $u,v\in T_{x}\mathcal{M}$, whereas the advection term is defined with respect to $g$. Therefore,
\BEA
\ko=(b+\kappa)\cdot \nabla+\Delta_{\tilde{g}}.\nonumber
\EEA
Also, using the fact that
\BEA
\Delta_{\tilde{g}} (uv) =  u \Delta_{\tilde{g}} v + v \Delta_{\tilde{g}} u + \nabla_{\tilde{g}} u \cdot  \nabla_{\tilde{g}} v,\nonumber
\EEA 
one can deduce that the terms in the bracket in \eqref{tempvar} is,
\BEA
\ko(u^2q^{-1})+u^2\ko q^{-1}-2u\ko(uq^{-1}) = q^{-1}\|\nabla_{\tilde{g}}u\|^2,\nonumber
\EEA
where all the gradient terms vanish. Substituting this to \eqref{tempvar}, we have
\BEA
\mathrm{Var}(Y_j) =2^{-d/2-1}\epsilon^{3d/2+1}m^3\tmo^{-4}q^{-1}\|\nabla_{\tilde{g}}u\|^2+\epsilon^{3d/2}\mathcal{O}(\epsilon\te, \epsilon^2,\te^2).\nonumber
\EEA
Also, since 
\BEA
\mathbb{E}(G_i)^4 = \epsilon^{2d}m^4\tmo^{-4} + \epsilon^{2d}\mathcal{O}\left(\epsilon,\te\right)\nonumber,
\EEA
the inequality in \eqref{chernoff} becomes,
\begin{align*}
P\left(\sum_{j \neq i}Y_j>a\epsilon(N-1)\mathbb{E}(G_i)^2\right)\leq 2\exp\bigg(-\frac{a^2\epsilon^{d/2+1}(N-1)m}{2^{-d/2-1}\|\nabla_{\tilde{g}}u\|^2q^{-1}} \bigg).
\end{align*}
This inequality basically means that the error in estimating $L_{\epsilon,\te}$ with $\hat{L}_{\epsilon,\te}$ (from the relation in \eqref{discreteerrorLee}) is of order,
\BEA
a = \mathcal{O}\left(\frac{\|\nabla_{\tilde{g}}u\| q^{-1/2}}{\sqrt{N}\epsilon^{d/4+1/2}}\right),\nonumber
\EEA
which is much larger than the error of neglecting the $j$th term, $\mathcal{O}(N^{-1}\te^{-d/2})$, in the summation in \eqref{discreteerrorLee}. This completes the proof of Lemma~\ref{lemma5}.

\comment{
Considering just the gradient terms in the expression for the variance we see that 
\begin{align*}
b\cdot\nabla(u^2q^{-1})+u^2 b\cdot \nabla q^{-1}-2ub\cdot \nabla (uq^{-1})=0.
\end{align*}
Consequently

\begin{align*}
\mathrm{Var}(Y_j)&=2^{-d/2-1}\epsilon^{3d/2+1}m^3\tm^{-4}\bigg(\ko(u^2q^{-1})+u^2\ko q^{-1}-2u\ko(uq^{-1})\bigg)\\ &\hspace{.25in}+ \epsilon^{3d/2}\mathcal{O}(\epsilon\te, \epsilon^2,\te^2)\\
&=2^{-d/2-2}\epsilon^{3d/2+1}m^3\tm^{-4}\bigg( C_{ij}\nabla_i\nabla_j(u^2q^{-1})+u^2C_{ij}\nabla_i\nabla_jq^{-1}-\\ &\hspace{.25in}2uC_{ij}\nabla_i\nabla_j(uq^{-1})\bigg)+ \epsilon^{3d/2}\mathcal{O}(\epsilon\te, \epsilon^2,\te^2).
\end{align*}

We now define a new metric $\tilde{g}(u,v)=g(C^{-1/2}u,C^{-1/2}v)$. From lemma $4.2$ in \cite{bs:16}, the hessian now becomes 
$$C_{ij}\nabla_i\nabla_j =\Delta_{\tilde{g}}+\kappa \cdot \nabla.$$
Thus 
\begin{align*}
\mathrm{Var}(Y_j)&=2^{-d/2-2}\epsilon^{3d/2+1}m^3\tm^{-4}\bigg(\Delta_{\tilde{g}}(u^2q^{-1})+u^2\Delta_{\tilde{g}}q^{-1}-2u\Delta_{\tilde{g}}(uq^{-1})\\ \hspace{.25 in}&+\kappa\cdot \nabla(u^2q^{-1})+u^2\kappa \cdot \nabla q^{-1}-2u\kappa \cdot \nabla(uq^{-1})\bigg)+\epsilon^{3d/2}\mathcal{O}(\epsilon\te, \epsilon^2,\te^2)\\
&=2^{-d/2-2}\epsilon^{3d/2+1}m^3\tm^{-4}\bigg(\Delta_{\tilde{g}}(u^2q^{-1})+u^2\Delta_{\tilde{g}}q^{-1}-2u\Delta_{\tilde{g}}(uq^{-1})\bigg) \\
&\hspace{.25 in} +\epsilon^{3d/2}\mathcal{O}(\epsilon\te, \epsilon^2,\te^2)\\
&=2^{-d/2-2}\epsilon^{3d/2+1}m^3\tm^{-4}\bigg(2q^{-1}u\Delta_{\tilde{g}}u+2q^{-1}\|\nabla_{\tilde{g}}u\|^2+u^2\Delta_{\tilde{g}}q^{-1}\\ &\hspace{.25 in}+4u\nabla_{\tilde{g}}u\nabla_{\tilde{g}}q^{-1}+u^2\Delta_{\tilde{g}}q^{-1}-2q^{-1}u\Delta_{\tilde{g}}u-2u^2\Delta_{\tilde{g}}q^{-1}\\ &\hspace{.25 in}-4u\nabla_{\tilde{g}}u\nabla_{\tilde{g}}q^{-1}\bigg)+\epsilon^{3d/2}\mathcal{O}(\epsilon\te, \epsilon^2,\te^2)\\
&=2^{-d/2-1}\epsilon^{3d/2+1}m^3\tm^{-4}q^{-1}\|\nabla_{\tilde{g}}u\|^2+\epsilon^{3d/2}\mathcal{O}(\epsilon\te, \epsilon^2,\te^2).
\end{align*}

Chernoff's bound now becomes
\begin{align*}
P\left(\sum_{j \neq i}Y_j>a\epsilon(N-1)\mathbb{E}(G_i)^2\right)&\leq 2\exp\bigg(\frac{-a^2\epsilon^2(N-1)^2\mathbb{E}(G_i)^4}{4(N-1)\mathrm{Var}(Y_j)}\bigg)\\
&=2\exp\bigg(\frac{-2^{d/2-1}a^2\epsilon^{d/2+1}(N-1)mq}{\|\nabla_{\tilde{g}}u\|^2}\bigg).
\end{align*}

Finally, from the discussion at the end of the previous section, the rate of convergence of the solution $u_{\epsilon}$ to $u$ (when the latter exists) will be the same as the convergence rate of $\hat{L}_{\epsilon,\te}$ up to a multiplicative constant (the matrix norm of $\hat{L}_{\epsilon,\te}$).}

\section*{References}

\end{document}